\numberwithin{equation}{section}
\newtheorem{theorem}{Theorem}[section]
\newtheorem{lemma}[theorem]{Lemma}
\newtheorem{definition}[theorem]{Definition}
\newtheorem{proposition}[theorem]{Proposition}
\newtheorem{remark}[theorem]{Remark}
\begin{document}
\title[\hfil Regularity for degenerate fully nonlinear nonlocal equations] {Regularity theory for degenerate fully nonlinear nonlocal equations with a Hamiltonian term}
\author[Y. Fang, J. Kinnunen, C. Zhang]{Yuzhou Fang, Juha Kinnunen and Chao Zhang}


\address{Yuzhou Fang\hfill\break School of Mathematics, Harbin Institute of Technology, Harbin 150001, China}
 \email{18b912036@hit.edu.cn}

\address{Juha Kinnunen \hfill\break Department of Mathematics, P.O. Box 11100, Aalto University, Finland}
\email{juha.k.kinnunen@aalto.fi}

\address{Chao Zhang\hfill\break School of Mathematics and Institute for Advanced Study in Mathematics, Harbin Institute of Technology, Harbin 150001, China}
 \email{czhangmath@hit.edu.cn}

\subjclass[2020]{35B65; 35D40; 35J70; 35R11}   \keywords{Regularity; viscosity solution; nonlocal fully nonlinear degenerate equation; Hamiltonian terms}

\begin{abstract}
We investigate a class of degenerate fully nonlinear nonlocal elliptic equations with Hamiltonian terms. By precisely characterizing the interaction between the degeneracy law of equations and the growth behavior of the Hamiltonian terms, we establish the Lipschitz regularity of viscosity solutions by the Ishii-Lions method, and further show the gradient H\"{o}lder continuity for solutions via utilizing perturbation techniques. Additionally, under minimal assumptions on the degeneracy pattern, the $C^1$-differentiability property of solutions is explored as well. 
\end{abstract}

\maketitle

\section{Introduction}
\label{sec0}

This work discusses the gradient regularity of viscosity solutions to a class of degenerate fully nonlinear integro-differential equations with Hamiltonian terms
\begin{equation}
\label{main}
-\Phi(x,Du)\mathcal{I}_\sigma(u,x)+H(x,Du)=f(x) \quad\text{in }  B_1,
\end{equation}
where $\sigma\in (0,2), \Phi(x,0)=0, f\in C(B_1)\cap L^\infty(B_1)$ and the conditions on the law of degeneracy $\Phi$ are given in Section \ref{sec2}, and the fully nonlinear nonlocal operator $\mathcal{I}_\sigma$ is uniformly elliptic in the sense of Caffarelli and Silvestre \cite{CS09,CS11}, i.e.,
$$
\inf_{I\in\mathcal{L}}Iv(x)\le \mathcal{I}_\sigma(u+v,x)-\mathcal{I}_\sigma (u,x)\le\sup_{I\in\mathcal{L}}Iv(x)
$$
for a set of linear operators $\mathcal{L}$. Here $B_1=B_1(0)$ is the unit ball in the Euclidean space $\mathbb{R}^N$, and the Hamiltonian term $H(x,Du)$ fulfills some appropriate preconditions stated in Section \ref{sec2}. Let $\mathcal{K}$ be a collection of symmetric kernels consisting of measurable functions $K:\mathbb{R}^N\setminus\{0\}\rightarrow\mathbb{R}^+$ satisfying
$$
\lambda\frac{C_{N,\sigma}}{|x|^{N+\sigma}}\le K(x)\le \Lambda\frac{C_{N,\sigma}}{|x|^{N+\sigma}},
$$
where $0<\lambda\le\Lambda<\infty$ and $C_{N,\sigma}>0$ is a normalizing constant. 
For $K\in \mathcal{K}$ and $u:\mathbb{R}^N\rightarrow\mathbb{R}$, let
$$
I_Ku(x)=\frac{1}{2}\mathrm{P.V.}\int_{\mathbb{R}^N}(u(x+y)+u(x-y)-2u(x))K(y)\,dy
$$
with the symbol P.V. representing the Cauchy principal value. Notice that $I_Ku$ for each $K$ is well-defined, provided $u$ is $C^{1,1}$ in a neighborhood the point $x$ and fulfills an adequate growth condition at infinity
$$
\|u\|_{L^1_\sigma(\mathbb{R}^N)}=\int_{\mathbb{R}^N}\frac{|u(y)|}{1+|y|^{N+\sigma}}\,dy<\infty.
$$
At this point, we denote as $L^1_\sigma(\mathbb{R}^N)$ the set of such functions. For a two-parameter family of kernels $\{K_{\alpha\beta}\}_{\alpha\beta}\subseteq\mathcal{K}$,
a nonlinear operator
$$
\mathcal{I}_\sigma (u,x)=\inf_\beta\sup_\alpha I_{K_{\alpha\beta}}u(x)
$$
naturally arises from stochastic control problems, see \cite{Soner}. Namely in competitive stochastic games involving two or more players, they are allowed to choose diverse strategies at every step to maximize the desired value of some functions at the first exit point of domain. 

Over the past years, fully nonlinear integro-differential equations have been receiving increasing attention, starting with the pioneering works \cite{CS09,CS11}. In particular, Caffarelli and Silvestre \cite{CS09} established a series of interior behaviours for viscosity solutions to such equations, including the Aleksandrov--Bakelman--Pucci estimate and Harnack's inequality, as well as $C^{0,\alpha}$ and $C^{1,\alpha}$ estimates. Moreover, these results remain stable as the degree of operator tends to two, thus which could be regarded as a natural generalization of the regularity theory for elliptic PDEs. Subsequently, the $C^{1,\alpha}$-regularity was extended, by the same authors \cite{CS11}, to nonlocal equations that are not necessarily translation-invariant by applying compactness and perturbative techniques. Their argument was based on the observation that solutions to the considered equation are $C^{1,\alpha}$ regular, provided the equation is uniformly close to another one having $C^{1,\alpha}$ solutions. In addition, for a wide class of integro-differential equations encompassing first and second-order terms, one can refer to \cite{BCI11, BCCI12} concerning H\"{o}lder or Lipschitz continuity by developing the Ishii--Lions viscosity method for nonlocal versions.

Recently, dos Prazeres and Topp \cite{PT21} first studied the fully nonlinear fractional equations
$$
-|Du|^p\mathcal{I}_\sigma (u, x)=f(x)
$$
that degenerate with the gradient and proved the interior H\"{o}lder regularity by means of the ideas in \cite{BCI11, BCCI12}, as well as the gradient H\"{o}lder estimate via an improvement of flatness as $\sigma$ sufficiently close to 2. For related results on such equations, we also refer to \cite{BKLT15, APT23}. Subsequently, these regularity properties were generalized to the nonlinear integro-differential equations with nonhomogeneous degeneracy of the type
$$
-(|Du|^p+a(x)|Du|^q)\mathcal{I}_\sigma (u, x)=f(x),   \quad a(x)\ge0.
$$
In particular, the authors in \cite{APS24} inferred that, for any $1<\sigma<2$, there exists at least one $C^{1, \alpha}$-regular viscosity solution in the case where $0<p\le q$. Fang, R\u{a}dulescu and Zhang \cite{FRZ25} later examined nonlocal problems with more general structures, and verified an improved gradient estimate at the critical point of solutions. Furthermore, the borderline regularity for the aforementioned nonhomogeneous equations was established in \cite{WJ25}. Moreover, we refer to \cite{OT23} for the $C^{1, \alpha}$ regularity on variable-exponent degenerate mixed local and nonlocal equations.

On the other hand,  when $\Phi(x, Du)\equiv1$ and $\mathcal{I}_\sigma (u,x)=(-\Delta)^{\frac{\sigma}{2}}u(x)$,  the equation in \eqref{main} reduces to a class of nonlocal Hamilton--Jacobi equations of the form
$$
(-\Delta)^{\frac{\sigma}{2}}u(x)+H(x,Du)=f(x).
$$
Some research has been conducted on such equations, with results tailored to specific requirements on the Hamiltonian $H$. For instance, Barles, Koike, Ley and Topp \cite{BKLT15} concluded H\"{o}lder regularity for bounded viscosity solutions when the coercive gradient term has the stronger effect, which was applied to derive the ergodic asymptotic behaviour of the related parabolic problem; see \cite{BT16} for Lipschitz continuity of censored subdiffusive integro-differential equations, and \cite{CR11} for a probabilistic approach. For unbounded viscosity solutions to the previous Hamilton--Jacobi equation (with an Ornstein--Uhlenbeck drift), Chasseigne, Ley and Nguyen \cite{CLN19} studied the Lipschitz estimate under the scenario that the Hamiltonian is sublinear. When the fractional Laplacian is replaced by a fully nonlinear nonlocal operator, i.e., $\mathcal{I}_\sigma (u, x)+H(x, Du)=f(x)$, the existence of boundary blow-up solutions to associated nonlocal Dirichlet problems was verified by \cite{DQT24}. More recently, for fully nonlinear nonlocal Hamilton--Jacobi equations, Liouville theorems on solutions were considered in \cite{BQT25} by applying the Ishii--Lions type technique. We also note that in the special case where the Hamiltonian is a drift term (i.e., $H(x, Du)=b(x)\cdot Du$), Quaas, Salort and Xia \cite{QSX20} discussed the principal eigenvalues of such equations. More related results can be found in e.g. \cite{BK23, BT25a, DJZ18, CD12} and references therein.

To the best of our knowledge, higher order regularity results, particularly on $C^{1, \alpha}$ regularity, was still unknown for the above-mentioned fully nonlinear nonlocal Hamilton-Jacobi problems, let alone for those with degeneracy as \eqref{main}. On the other hand, when $\Phi(x, Du)\equiv |Du|^p$, the investigation of Eq. \eqref{main} is also motivated by its local counterpart, i.e., second-order fully nonlinear equations of the form
\begin{equation}
\label{1-2}
-|Du|^pF(D^2u)+H(x,Du)=f(x) \quad \text{in } B_1,
\end{equation}
where the operator $F: \mathcal{S}^N\rightarrow\mathbb{R}$ is uniformly elliptic in the sense that
$$
\lambda\mathrm{Tr}(B)\le F(A+B)-F(A)\le\Lambda\mathrm{Tr}(B)
$$
for all $A, B\in\mathcal{S}^N$, $B\ge0$ and $\mathcal{S}^N$ is the set of symmetric matrices. For the situation that $H(x, Du)\equiv0$, various aspects of this kind of equations have been already examined: properties of eigenfunctions and eigenvalues \cite{BD10}, ABP estimate and Harnack inequality \cite{Imb11}, $C^{1,\alpha}$-regularity issues \cite{ART15,IS13}, Schauder-type theory \cite{Nas24}, $W^{2,\delta}$-type estimate \cite{BKO25}, more general degeneracy \cite{BBLL,DeF,FRZ21,JLMS} and so on. When it comes to the gradient H\"{o}lder continuity under the more general scenario where $H(x,Du)=b(x)|Du|^q$, we refer readers to \cite{BD16} for the case $0\le q\le p+1$ and \cite{BDL19} for the case $p+1<q\le p+2$. We also mention that for $0\le q\le p+1$, Andrade and Nascimento \cite{AN25} proved the sharp regularity estimate for viscosity solutions.

Motivated by the aforementioned works, the purpose of this paper is twofold: first, to develop a viscosity solution theory for the nonlocal equation \eqref{main}, corresponding to that established for degenerate fully nonlinear elliptic problems (e.g., \eqref{1-2}); second, to explore the higher order regularity theory for fully nonlinear nonlocal Hamilton-Jacobi equations with degeneracy. More precisely, we aim to find suitable structural assumptions on \eqref{main} that enable us to establish its interior $C^{0,1}$, $C^1$ and $C^{1, \alpha}$ regularity in a universal way. A key challenge arises from the coupled interplay between the Hamiltonian term and the potentially degenerate gradient term, which introduces non-trivial technical difficulties. To derive gradient continuity results, we assume that the fractional index $\sigma$ in \eqref{main} is close enough to two. This ensures the nonlinear nonlocal operator $\mathcal{I}_\sigma$ approximates a uniformly elliptic operator $F$, allowing the regularity of solutions to \eqref{main} to be inherited, in an appropriate manner, from that of $F$-harmonic functions. Here, $F$-harmonic functions refer to the viscosity solutions of the equation $F(D^2 u)=0$.

To simplify the discussion, we set $\Phi(x, Du) \equiv |Du|^p$ in Theorems \ref{thm1} and \ref{thm2} below. With this notation, we are in a position to state the $C^{1, \alpha}$ regularity result for scenarios where the Hamiltonian's growth does not exceed \(p+1\).

\begin{theorem}
\label{thm1}
Let $u\in C(\overline{B}_1)$ be a viscosity solution to \eqref{main}, and assume that the conditions $(A_1)$--$(A_4)$ in Section \ref{sec2} hold with $0\le q\le p+1$.
Then there exists $\sigma_0\in (1, 2)$, close enough to $2$, such that if $\sigma\in(\sigma_0, 2)$ then $u$ is locally $C^{1,\alpha}(B_1)$-regular with the following estimates:
\begin{itemize}
	\item [(i)] if $q< p+1$, then
	$$
	\|u\|_{C^{1,\alpha}(B_{\frac12})}\le C\left(\|u\|_{L^\infty(B_{1})}+\|u\|_{L^1_\sigma(\mathbb{R}^N)}+\mathcal{H}^\frac{\sigma-1}{p+1-q}+(\mathcal{M}+\|f\|_{L^\infty(B_{1})})
	^\frac{\sigma-1}{1+p}\right),
	$$
	where the constant $C\ge1$ depends on $N$, $\lambda$, $\Lambda$, $p$, $q$ and $\alpha$;
	
	\item [(ii)] if $q=p+1$, then
	$$
	\|u\|_{C^{1,\alpha}(B_{\frac12})}\le \overline{C}\bigl(\|u\|_{L^\infty(B_{1})}+\|u\|_{L^1_\sigma(\mathbb{R}^N)}\bigr),
	$$
	where the constant $\overline{C}$ depends in addition on $\mathcal{M}$, $\mathcal{H}$ and $\|f\|_{L^\infty(B_{1})}$.
\end{itemize}
Here $0<\alpha<\min\bigl\{\overline{\alpha},\frac{\sigma-1}{1+p}\bigr\}$
and $\overline{\alpha}$ is the exponent corresponding to the optimal regularity for an $F$-harmonic function.
\end{theorem}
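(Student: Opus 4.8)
The plan is to prove Theorem~\ref{thm1} via a perturbation (compactness/approximation) scheme in the spirit of Caffarelli--Silvestre \cite{CS11} and dos Prazeres--Topp \cite{PT21}, reducing the regularity of viscosity solutions of \eqref{main} to the optimal $C^{1,\overline\alpha}$ estimate for $F$-harmonic functions. First I would establish, as a preliminary step, the interior Lipschitz bound of Theorem (stated elsewhere in Section~\ref{sec2}--\ref{sec3}) by the Ishii--Lions method: this is where the structural balance $0\le q\le p+1$ enters, guaranteeing that the degenerate coefficient $|Du|^p$ and the Hamiltonian growth $|Du|^q$ do not compete destructively, and yielding the a~priori bound $\|Du\|_{L^\infty(B_{3/4})}\le C(\|u\|_{L^\infty(B_1)}+\|u\|_{L^1_\sigma(\mathbb{R}^N)}+\mathcal{H}^{(\sigma-1)/(p+1-q)}+(\mathcal{M}+\|f\|_{L^\infty})^{(\sigma-1)/(1+p)})$ when $q<p+1$, and the analogous bound with constant depending on $\mathcal{M},\mathcal{H},\|f\|_\infty$ when $q=p+1$. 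Call this quantity $\Theta$; after the normalization $v=u/\Theta$ (and rescaling the domain) one reduces to the case $\|u\|_{L^\infty(B_1)}+\|u\|_{L^1_\sigma}\le 1$, $\|Du\|_{L^\infty}\le 1$, with a correspondingly small right-hand side and small $\mathcal{H},\mathcal{M}$ in the $q<p+1$ case; in the $q=p+1$ case one only gets boundedness, which is why the constant in (ii) retains the dependence on $\mathcal{M},\mathcal{H},\|f\|_\infty$.

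Next I would set up the approximation lemma: if $u$ solves \eqref{main} with data small (in the normalized regime) and $\sigma$ close to $2$, then $u$ is $C^{0,1}$-close on a smaller ball to a function $h$ that is $F$-harmonic, $F(D^2h)=0$, where $F$ is the uniformly elliptic operator arising as the second-order limit of $\mathcal{I}_\sigma$. The proof is by contradiction and compactness: take sequences $\sigma_k\to 2$, solutions $u_k$ with data tending to zero, that fail to be close to any $F$-harmonic function; using the uniform Lipschitz (indeed $C^{1,\gamma}$ from the nonlocal $C^{1,\gamma}$ estimates of \cite{CS09,CS11,BCCI12}) bounds, extract a locally uniformly convergent subsequence $u_k\to u_\infty$, pass to the limit in the viscosity sense — here the degenerate factor $|Du_k|^p$ is handled exactly as in \cite{PT21,APS24}, noting that either $Du_\infty\ne 0$ so we divide, or $Du_\infty=0$ and testing is vacuous — to conclude $F(D^2u_\infty)=0$, contradicting the separation. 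The key nonlocal input is stability of viscosity solutions under $\sigma_k\to 2$ and the tail estimate $\int_{|y|>1}|u_k(y)|\,|y|^{-N-\sigma}\,dy\to 0$ controlled by the $L^1_\sigma$ bound.

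With the approximation lemma in hand, the final step is the standard iteration: using $\|h\|_{C^{1,\overline\alpha}}$ one finds, for any $\alpha<\min\{\overline\alpha,\frac{\sigma-1}{1+p}\}$, an affine function $\ell_0(x)=a_0+b_0\cdot x$ with $|a_0|+|b_0|\le C$ such that $\sup_{B_\rho}|u-\ell_0|\le \rho^{1+\alpha}$ for a suitably small universal radius $\rho$; one then checks that the rescaled function $u_1(x)=\rho^{-(1+\alpha)}(u(\rho x)-\ell_0(\rho x))$ solves an equation of the same form \eqref{main} with the same ellipticity, with a new Hamiltonian and right-hand side that are \emph{smaller} by a factor $\rho^{\theta}$ for some $\theta>0$ — this rescaling invariance is where the exponent $\frac{\sigma-1}{1+p}$ is forced, since the gradient scales like $\rho^{\alpha}$ and the $|Du|^p\mathcal{I}_\sigma$ block scales like $\rho^{p\alpha+(\sigma-1)-(1+\alpha)}$, whose nonnegativity gives $\alpha\le\frac{\sigma-1}{1+p}$. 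Iterating produces a Cauchy sequence of affine functions whose limit furnishes the first-order Taylor expansion of $u$ at each point of $B_{1/2}$ with the claimed modulus, and undoing the normalization yields the estimates (i) and (ii) with the stated constants. The main obstacle I expect is the approximation lemma in the degenerate regime $q=p+1$: there the Lipschitz bound depends on the data, the normalization does not make the lower-order terms small, and one must instead exploit that after the affine subtraction and rescaling the \emph{oscillation} of the gradient (rather than its size) shrinks, so the compactness argument has to be run with the finer information that $u_1$ has small Lipschitz seminorm even though $\|Du\|_\infty$ is merely bounded — carrying the Hamiltonian and $f$ dependence faithfully through the iteration while keeping the geometric decay is the delicate point.
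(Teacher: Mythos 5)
Your overall strategy (Ishii--Lions estimates, compactness to an $F$-harmonic function as $\sigma\to2$, then improvement of flatness) is the same as the paper's, but two steps you treat as routine are precisely where the real work lies, and as written they are gaps. First, the iteration does \emph{not} stay within the class \eqref{main}: after subtracting the affine function $\ell_j$ and rescaling, $u_{j+1}$ solves the gradient-shifted equation \eqref{main2} with $\xi_j=\rho^{-j\alpha}b_j$, and these shifts blow up along the iteration even though $|b_j|$ stays bounded. Consequently the a priori regularity and the approximation lemma must be proved \emph{uniformly in $\xi$}, including unbounded $\xi$, under a smallness condition of the type $\mathcal{H}\bigl(1+|\xi|^{(q-p)_+}\bigr)\le\kappa$; this is exactly why the paper proves Lemmas \ref{lem3-1}--\ref{lem3-2} and Lemma \ref{lem4-1} for \eqref{main2}, splits into $|\xi|$ large/small, and verifies in Lemma \ref{lem4-2} that $\mathcal{H}_j(1+|\xi_j|^{q-p})$ stays small using $|b_j|\le C/(1-\rho^\alpha)$ and $\alpha<\sigma-1$. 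Your proposal never introduces the shifted equation, asserts the rescaled function solves ``an equation of the same form \eqref{main}'', and therefore the scheme does not close when $q>p$ (the Hamiltonian evaluated at $\rho^{j\alpha}(Du_j+\xi_j)$ is not made small by your bookkeeping). Relatedly, your claim that the Lipschitz bound with the exact constant $\Theta$ holds for all $0\le q\le p+1$ is not justified: for $p<q\le p+1$ the paper only obtains H\"older continuity for \eqref{main2} under the smallness condition \eqref{3-2-1}, which suffices for compactness but is not the unconditional quantified gradient bound your normalization $v=u/\Theta$ presupposes (the final constants in (i)--(ii) come instead from the scaling $v=u(x_0+rx)/K$ in the proof of Theorem \ref{thm1}).

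Second, in the compactness step your statement that when the limiting gradient of the test function vanishes ``testing is vacuous'' is incorrect. The limit must be shown to solve $F(D^2v)=0$ in the full viscosity sense, including for test functions whose gradient (plus the shift, i.e.\ $|\eta+\overline{\xi}|=0$) vanishes at the touching point; in that case the degenerate factor kills the viscosity inequality and gives no information, so one cannot simply discard such test functions. The paper's Lemma \ref{lem4-1} resolves this by perturbing the test function with $\gamma|P_T(x)|$, where $T$ is the span of eigenvectors of $D^2\varphi$ with positive eigenvalues, choosing suitable directions $e$ so the shifted gradient is bounded away from zero, and using $D^2|P_T|\ge0$ together with ellipticity to conclude $F(A)\le0$; an analogous case analysis handles bounded versus unbounded sequences $(\xi_j)$. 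This argument (or an equivalent ``cutting lemma'' type substitute) is essential and missing from your outline; likewise the parenthetical appeal to uniform $C^{1,\gamma}$ estimates of Caffarelli--Silvestre is unwarranted for the degenerate operator --- the uniform H\"older/Lipschitz bounds from the Ishii--Lions lemmas are what supply equicontinuity.
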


\begin{remark}
The constant $C$ or $\overline{C}$ in Theorem \ref{thm1} is uniform in $\sigma$, that is, it does not blow up as $\sigma\rightarrow2$. The index of gradient H\"{o}lder continuity stated above is also sharp in the spirit of \cite{AN25, ART15, PT21}.
\end{remark}

Next, we present the Lipschitz continuity of viscosity solutions to \eqref{main} under the condition that the growth order $q$ of the Hamiltonian $H$ is between $p+1$ and $p+\sigma$.
We would like to mention that the condition $q\le p+\sigma$ mirrors the constraint $q\le p+2$ for the fully nonlinear (local) equations with the Hamiltonian term 
due to Birindelli, Demengel and Leoni\cite{BDL19,BD16}. Since our Lipschitz results are stable as $\sigma$ approaches to two, the latter condition is a natural limit case of the former. We will clarify in detail the derivation of the condition $q\le p+\sigma$ at the end of Section \ref{sec3}.

\begin{theorem}
\label{thm2}
Let $\sigma\in(1,2)$ and assume that the hypotheses $(A_1)$--$(A_3)$ in Section \ref{sec2} hold with $p+1< q\le p+\sigma$.
Suppose that $u\in C(\overline{B}_1)$ is a viscosity solution to \eqref{main}. Then $u$ is locally Lipschitz continuous in $B_1$.
More precisely, there exists a universal constant $C\ge1$ such that
$$
|u(x)-u(y)|\le C|x-y|
$$
for every $x,y\in B_{\frac12}$, where the constant $C$ is uniformly bounded as $\sigma\rightarrow2$.
\end{theorem}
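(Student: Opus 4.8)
The proof runs the Ishii--Lions viscosity doubling method, adapted to the nonlocal operator; the decisive point is that for $p+1<q\le p+\sigma$ the Hamiltonian is coercive of a supercritical order and so cannot be overwhelmed by the (possibly degenerate) diffusion. It suffices, for each fixed $x_{0}\in B_{1/2}$ and suitable universal constants $L,M$, to rule out $\sup\Theta>0$, where
\[
\Theta(x,y)=u(x)-u(y)-L\,\varpi(|x-y|)-\tfrac{M}{2}\bigl(|x-x_{0}|^{2}+|y-x_{0}|^{2}\bigr),
\]
since $\sup\Theta\le0$ forces $u(x)-u(x_{0})\le L\,\varpi(|x-x_{0}|)+\tfrac{M}{2}|x-x_{0}|^{2}\le C\,|x-x_{0}|$ on $B_{1/2}$, whence the two-sided bound after exchanging $x$ and $x_{0}$. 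Here $\varpi$ is a bounded, increasing, strictly concave modulus with $\varpi(0)=0$, $\varpi'(0^{+})=1$ and $\varpi''\le-\kappa_{0}<0$ near $0$ (e.g. a truncation of $t\mapsto t-\kappa_{0}t^{2}$). Assume $\sup\Theta>0$. As $u\in C(\overline{B}_{1})$ and $M$ is large, $\Theta$ attains its maximum at an interior $(\bar{x},\bar{y})$ with $\bar{x}\ne\bar{y}$; the usual first-order analysis together with $\Theta(\bar{x},\bar{y})>0$ yields $|\bar{x}-x_{0}|,|\bar{y}-x_{0}|\le CM^{-1/2}$, $|\bar{x}-\bar{y}|\le CL^{-1}$, and, with $\hat{e}=(\bar{x}-\bar{y})/|\bar{x}-\bar{y}|$, subgradients $p_{1}$ of $u$ at $\bar{x}$, $p_{2}$ at $\bar{y}$ satisfying $\tfrac12 L\le|p_{1}|,|p_{2}|\le 2L$ and $|p_{1}-p_{2}|\le CM^{1/2}$, provided $M\ll L^{2}$; in particular $|p_{i}|$ is large, so the degenerate factor $|p_{i}|^{p}$ is harmless.

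I would then invoke the nonlocal Jensen--Ishii lemma (available from the preliminaries, in the spirit of Barles--Chasseigne--Imbert) to obtain, for each small $\delta>0$, the viscosity sub-inequality at $\bar{x}$ and the super-inequality at $\bar{y}$, with $\mathcal{I}_{\sigma}$ evaluated against the smooth test functions $x\mapsto\varpi(|x-\bar{y}|)$ and $y\mapsto-\varpi(|\bar{x}-y|)$ on $B_{\delta}(0)$ and against $u$ on the complement. Subtracting, and dividing by $|p_{1}|^{p}\sim L^{p}$, the left-hand side is in essence the difference of the two nonlocal evaluations, which I estimate by splitting $\mathbb{R}^{N}$ into a thin cone $\mathcal{C}$ about $\pm\hat{e}$, the rest of $B_{\delta}$, and $B_{\delta}^{c}$: on $\mathcal{C}$ the strict concavity of $\varpi$ makes the second-order increments of the test functions strictly negative and produces a \emph{gain}; off the cone in $B_{\delta}$ the positive tangential part of those increments gives an error controlled by $L|\bar{x}-\bar{y}|^{-1}$, $M$ and a power of $\delta$; on $B_{\delta}^{c}$ the tail is controlled by $\|u\|_{L^{\infty}(B_{1})}+\|u\|_{L^{1}_{\sigma}(\mathbb{R}^{N})}$ and a negative power of $\delta$. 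On the other side, $(A_{2})$--$(A_{3})$ (growth of order $q$, local Lipschitz dependence in $Du$, continuity in $x$, and the coercivity available in this range) bound the Hamiltonian discrepancy $\bigl||p_{1}|^{-p}H(\bar{x},p_{1})-|p_{2}|^{-p}H(\bar{y},p_{2})\bigr|$ through $|p_{1}-p_{2}|$, $|\bar{x}-\bar{y}|$ and $L^{q-p-1}$, the $f$-terms being of lower order.

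The decisive step is to calibrate the free parameters — the aperture of $\mathcal{C}$, the radius $\delta$, and the relative sizes of $L$ and $M$ — so that the concavity gain strictly dominates the combined tangential and tail errors of $\mathcal{I}_{\sigma}$ together with the Hamiltonian discrepancy. This is precisely where the restriction $q\le p+\sigma$ enters: it is the threshold (the exact fractional analogue of $q\le p+2$ in the local theory of Birindelli--Demengel--Leoni) up to which the coercive structure of $H$, after division by $|p_{1}|^{p}$, still leaves room for the diffusion-driven gain to prevail. With such a choice of $L$ (large) and $M$, the inequality obtained by subtracting the two viscosity inequalities is violated, contradicting $\sup\Theta>0$. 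Thus $\sup\Theta\le0$, which gives the interior Lipschitz estimate on $B_{1/2}$; uniformity as $\sigma\to2$ follows because every constant above depends on $\sigma$ only through $C_{N,\sigma}$ and factors that stay bounded.

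The main obstacle is exactly this nonlocal estimate together with its parameter bookkeeping: one has to check that, for an appropriate cone geometry and cut-off radius, the Ishii--Lions gain genuinely beats \emph{both} the positive tangential and tail errors of the fractional operator \emph{and} the gap between the Hamiltonian values at the two maximum points, and it is the control of this last quantity that pins down the sharp admissible upper bound $q\le p+\sigma$.
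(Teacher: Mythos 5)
Your proposal follows the right family of techniques (doubling plus a nonlocal Ishii--Lions estimate), but the decisive step does not close as written, and it misses the mechanism the paper actually uses. First, your modulus $\varpi$ with merely bounded concavity, $\varpi''\le-\kappa_0<0$, is too weak in the nonlocal setting: the favourable second-order increments are only available on a cone contained in $B_{\delta_0|\overline{a}|}$, $\overline{a}=\overline{x}-\overline{y}$, so the concavity gain is of order $L\kappa_0\,\eta_0^{\frac{N-1}{2}}(\delta_0|\overline{a}|)^{2-\sigma}$, and since the doubling gives only an \emph{upper} bound on $|\overline{a}|$, this gain can be arbitrarily small; it cannot dominate even the tail and right-hand-side terms uniformly, let alone the Hamiltonian contribution, which after dividing the viscosity inequalities by $|p_i|^{p}\sim L^{p}$ is of size $\mathcal{H}L^{q-p}$ with $q-p>1$. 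This is exactly why the paper works with $\omega(t)=t-\frac14 t^{1+\beta}$, whose second derivative blows up like $|\overline{a}|^{\beta-1}$, so that Lemma \ref{lem2-1}(1) yields a gain $CL|\overline{a}|^{\vartheta}$ with $\vartheta<0$. Second, even with that modulus, a single-pass argument in which $|\overline{x}-\overline{y}|$ is controlled only through $\|u\|_{L^\infty}$ (so $|\overline{x}-\overline{y}|\le 4\|u\|_{L^\infty}/L$) produces a gain of order $L^{1+|\vartheta|}$ with $|\vartheta|<\sigma-1$, hence strictly below $L^{\sigma}$, while the Hamiltonian term is of order $L^{q-p}$, which equals $L^{\sigma}$ at the endpoint $q=p+\sigma$; no calibration of the cone aperture, cut-off radius, $\beta$, $L$ and $M$ can close the bookkeeping at or near the endpoint. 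Your claim that the discrepancy $\bigl||p_1|^{-p}H(\overline{x},p_1)-|p_2|^{-p}H(\overline{y},p_2)\bigr|$ is controlled through $|p_1-p_2|$, $|\overline{x}-\overline{y}|$ and $L^{q-p-1}$ relies on Lipschitz dependence of $H$ in the gradient, a modulus in $x$, and coercivity, none of which are assumed: $(A_3)$ is only the one-sided growth bound \eqref{2-1}, so each Hamiltonian value can only be estimated separately, giving $\mathcal{H}L^{q-p}$ and not the cancellation you invoke.

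The paper's proof avoids all of this by a two-step bootstrap that your argument omits. Lemma \ref{lem3-3} first establishes H\"older continuity in the range $p+1<q\le p+\sigma$ using the test function $t^{\alpha}$ (the endpoint $q=p+\sigma$ is handled by taking $\alpha$ small so that the coefficient of $L$ remains positive). The proof of Theorem \ref{thm2} then reruns the Lipschitz-type doubling with $\omega(t)=t-\frac14 t^{1+\beta}$, but replaces the $L^{\infty}$ bound by the H\"older seminorm, which gives the sharper localization \eqref{3-4-1}, namely $|\overline{x}-\overline{y}|^{1-\alpha}\le 2[u]_{\alpha}/L$; this extra room upgrades the diffusion gain relative to $\mathcal{H}L^{q-p}|\overline{x}-\overline{y}|^{-\iota}$, and the choice $\beta\bigl(3+\frac{N-1}{2}-\sigma\bigr)<\alpha(\sigma-1)$ makes the exponent of $L$ on the right of \eqref{3-4-2} strictly less than one, producing the contradiction for $L$ large, uniformly as $\sigma\to2$. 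To repair your proposal you would need to (i) use a modulus whose second derivative blows up at the origin, and (ii) insert the intermediate H\"older step (or an equivalent quantitative improvement of $|\overline{x}-\overline{y}|$ in terms of $L$) before attempting the Lipschitz estimate; as it stands, the "calibration of parameters" you appeal to is precisely the point that fails.
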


\begin{remark}
Although the two theorems above are derived for the case $\Phi(x, Du)\equiv|Du|^p$, the statements are indeed also valid for the general scenario where $\Phi(x,Du)\sim|Du|^p$ and there are barely any differences for the proof of conclusions.
\end{remark}

We are now ready to give the third regularity result by relaxing the degeneracy conditions of the equation. For the fully nonlinear equation
$$
\gamma(|Du|)F(D^2u)=f(x),
$$
if no conditions are imposed on $\gamma$ whatsoever, solutions to this equation may not be differentiable. Particularly, Andrade, Pellegrino, Pimentel and Teixeira \cite{APPT22} proposed minimal hypotheses on the modulus of continuity $\gamma$ to ensure solutions preserve the $C^1$-differentiability. Here, for the nonlocal model \eqref{main}, we intend to establish the similar borderline regularity theory as well, which is new even for the local analogue of \eqref{main}.

\begin{theorem}
\label{thm3}
Let $u\in C(B_1)$ be a viscosity solution to \eqref{main} with $\Phi(x,Du)\equiv \gamma(|Du|)$, and assume that the conditions $(A_1)$, $(A_2)$, $(A_4)$--$(A_6)$ in Section \ref{sec2} hold. Then there exists $\sigma_0\in(1,2)$, sufficiently close to $2$, such that if $\sigma\in(\sigma_0, 2)$, then $u\in C^1_{\rm loc}(B_1)$. More precisely, there exists a modulus of continuity $\omega:\mathbb{R}^+_0\rightarrow\mathbb{R}^+_0$, which depends only on $N$, $\sigma$, $\gamma$, $\lambda$, $\Lambda$, $\|u\|_{L^\infty(B_1)}$ and $\|f\|_{L^\infty(B_1)}$, such that
$$
|Du(x)-Du(y)|\le \omega(|x-y|)
$$
for every $x,y\in B_{\frac14}$.
\end{theorem}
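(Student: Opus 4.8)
The plan is to transplant the perturbative scheme of Andrade, Pellegrino, Pimentel and Teixeira \cite{APPT22} for $\gamma(|Du|)F(D^2u)=f$ to the nonlocal framework, relying on the facts that for $\sigma$ near $2$ the operator $\mathcal{I}_\sigma$ is, on smooth functions, close to a uniformly elliptic second-order operator $F$, and that viscosity sub/supersolutions of the $\mathcal{I}_\sigma$-equation converge, as $\sigma\to2$, to those of $F(D^2u)=0$. First I would record an a priori Lipschitz bound $\|Du\|_{L^\infty(B_{3/4})}\le L_0$, with $L_0$ universal and stable as $\sigma\to2$, obtained by the Ishii--Lions argument already used for Theorem \ref{thm2} (where $|Du|$ is large the equation is uniformly elliptic after dividing by $\gamma(|Du|)$, the penalisation $\psi(|x-y|)$ with $\psi$ concave supplies the good second-order term, and the Hamiltonian is absorbed by $(A_4)$ and the growth hypothesis on $\gamma$ in $(A_6)$). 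The task then is a uniform modulus of continuity for $Du$ at each point of $B_{1/4}$.

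The heart of the argument is an approximation lemma: for every $\varepsilon>0$ there are $\delta(\varepsilon)>0$ and $\sigma_0(\varepsilon)\in(1,2)$ so that if $\sigma\in(\sigma_0,2)$, $\gamma$ satisfies $(A_5)$--$(A_6)$ with the fixed constants, and $u$ solves \eqref{main} with $\Phi=\gamma(|Du|)$ in $B_1$ with $\|u\|_{L^\infty(B_1)}+\|u\|_{L^1_\sigma(\mathbb{R}^N)}\le1$ and $\|f\|_{L^\infty(B_1)}+\sup_{B_1}|Du|\le\delta$, then there is an $F$-harmonic $h$ in $B_{1/2}$ with $\|h\|_{C^{1,\overline\alpha}(B_{1/4})}\le C$ and $\|u-h\|_{L^\infty(B_{1/2})}\le\varepsilon$. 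I would prove this by compactness: a failing sequence $(u_k,\gamma_k,f_k,H_k,\sigma_k)$ with $\sigma_k\uparrow2$ is equibounded and, by the $\sigma$-uniform interior $C^{0,\alpha}$ estimate of \cite{CS09}, equicontinuous, hence $u_k\to u_\infty$ locally uniformly with $u_\infty$ being $\delta$-Lipschitz; to see $u_\infty$ is $F$-harmonic, test with $\phi\in C^2$ touching $u_\infty$ from above at $y$, replace $\phi$ by $\phi+\tau\cdot e$ with $|\tau|$ fixed and $2\delta<|\tau|\ll1$ so the gradient at the contact point stays away from $0$, divide the viscosity inequality for $u_k$ by $\gamma_k$ — its right-hand side $(f_k-H_k)/\gamma_k$ tends to $0$ since $\|f_k\|_\infty\to0$, $|H_k(\cdot,p)|$ is controlled by a power of the small $|p|$ through $(A_4)$, and $\gamma_k$ stays uniformly positive there by $(A_5)$ — let $k\to\infty$ using $\mathcal{I}_{\sigma_k}\to F$, then $\tau\to0$, and conclude $F(D^2\phi(y))\le0$; the supersolution inequality is symmetric. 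So $u_\infty$ is $F$-harmonic, hence $C^{1,\overline\alpha}$ with universal estimates, contradicting the $\varepsilon$-separation.

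With the lemma available I would run a flatness iteration at each $x_0\in B_{1/4}$: after translating to the origin and a suitable rescaling and normalisation — here is where $(A_5)$--$(A_6)$ are genuinely used, since they guarantee the rescaled degeneracy modulus again lies in the admissible class with unchanged constants — fix $\beta\in(0,\overline\alpha)$, then $\rho\in(0,\tfrac12)$ with $C\rho^{\overline\alpha}\le\tfrac14\rho^{\beta}$, then $\varepsilon\le\tfrac14\rho^{1+\beta}$ with its $\delta,\sigma_0$. The affine map $\ell_1(x)=h(0)+Dh(0)\cdot x$ from the lemma satisfies $\sup_{B_\rho}|u-\ell_1|\le\tfrac12\rho^{1+\beta}$ and $|D\ell_1|\le C\varepsilon$, the rescaled function $\rho^{-(1+\beta)}(u(\rho\,\cdot)-\ell_1(\rho\,\cdot))$ again satisfies the hypotheses, and iterating produces affine $\ell_j$ with $|D\ell_{j+1}-D\ell_j|\le C\varepsilon\rho^{j\beta}$ and $\sup_{B_{\rho^j}(x_0)}|u-\ell_j|\le\rho^{j(1+\beta)}$. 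If $|D\ell_j|$ ever leaves the degenerate range the equation becomes uniformly elliptic on that ball and the $\sigma$-uniform $C^{1,\alpha}$ estimate of \cite{CS11} finishes the argument near $x_0$; otherwise the iteration runs forever. In both cases $D\ell_j$ converges, necessarily to $Du(x_0)$, and the geometric rate gives $|u(x)-u(x_0)-Du(x_0)\cdot(x-x_0)|\le|x-x_0|\,\omega_0(|x-x_0|)$ with $\omega_0$ depending only on the quantities listed in the statement. Uniformity in $x_0$ and a comparison of the affine approximations at two nearby points yield $|Du(x)-Du(y)|\le\omega(|x-y|)$ on $B_{1/4}$, and a covering gives $u\in C^1_{\mathrm{loc}}(B_1)$.

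I expect the principal difficulty to lie in the approximation lemma: performing the double passage to the limit ($\sigma_k\to2$ in the nonlocal term and the degeneration of $\gamma_k$) while forcing the Hamiltonian term to vanish — which is precisely what dictates the smallness normalisation and the use of $(A_4)$ — and, in the iteration, checking that the rescaled degeneracy law remains admissible with the same constants, which is the exact content of the minimal hypotheses $(A_5)$ and $(A_6)$.
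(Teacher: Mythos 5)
Your compactness lemma is in the spirit of the paper's Lemma \ref{lem5-2}, but the flatness iteration that follows contains the decisive gap. You run the iteration with a \emph{fixed} exponent $\beta\in(0,\overline\alpha)$ and decay $\rho^{j(1+\beta)}$, and you assert that after rescaling ``the rescaled degeneracy modulus again lies in the admissible class with unchanged constants.'' For a general modulus $\gamma$ this is false: rescaling $u_j(x)=\rho^{-j(1+\beta)}\bigl(u(\rho^jx)-\ell_j(\rho^jx)\bigr)$ turns the degeneracy law into (up to constants) $\rho^{-j(\sigma-1)}\prod\nu_i\,\gamma\bigl(\prod\nu_i\,t\bigr)$-type expressions, and with the geometric choice $\nu_i\equiv\rho^{\beta}$ this family of moduli can collapse as $j\to\infty$ unless $\gamma$ behaves like a power with $\beta\le\frac{\sigma-1}{1+p}$. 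This is exactly why the paper (following \cite{APPT22}) does \emph{not} fix an exponent: it constructs an adaptive sequence of flatness factors $\nu_j$ (either $\nu_j=\nu_{j-1}$ or $\gamma_j(c_j)=1$), shows the resulting moduli $\gamma_j$ form a shored-up, hence non-collapsing, family $\Gamma$ (Definitions \ref{def5-1}--\ref{def5-2}, Lemma \ref{lem5-0-1}), and applies the approximation lemma uniformly over that family. The Dini condition on $\gamma^{-1}$ in $(A_5)$ is then used, through Lemma \ref{lem5-0} and the sequence $(c_k)$, to guarantee that $\sum_j\prod_{i\le j}\nu_i\,r^j<\infty$ and that the slopes $B_j$ form a Cauchy sequence, which is what produces a $C^1$ modulus rather than a H\"older exponent. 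Your proposal never invokes the Dini condition at all, yet without it the theorem is false (that is the whole point of the ``minimal hypotheses'' in \cite{APPT22}); and if your fixed-$\beta$ iteration closed, it would prove $C^{1,\beta}$ regularity, which does not follow from $(A_5)$--$(A_6)$. The fallback ``if $|D\ell_j|$ leaves the degenerate range the equation becomes uniformly elliptic'' also does not hold as stated, since $\gamma(|Du+\xi_j|)$ can still vanish when $Du\approx-\xi_j$; the paper instead treats the $\xi$-perturbed equation \eqref{5-1-1} inside the same non-collapsing framework.

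Two smaller points: the a priori bound $\|Du\|_{L^\infty(B_{3/4})}\le L_0$ you posit is not established (and is not needed); the paper only proves, and only needs, a universal H\"older estimate for the $\xi$-perturbed $\gamma$-equation (Lemma \ref{lem5-1}) via Ishii--Lions. Likewise, equicontinuity of the failing sequence cannot be taken directly from the uniformly elliptic estimates of \cite{CS09}, because the factor $\gamma_k(|Du_k+\xi_k|)$ degenerates; this is precisely what Lemma \ref{lem5-1} supplies. With those repairs the compactness step matches the paper's Lemma \ref{lem5-2}, but the iteration must be rebuilt around the adaptive $\nu_j$, the non-collapsing family $\Gamma$, and the Dini condition on $\gamma^{-1}$ as in Lemma \ref{lem5-3} and Proposition \ref{pro5-4}.
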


This paper is organized as follows. In Section \ref{sec2}, we first collect some basic notations, notions of viscosity solution, and give the assumptions on \eqref{main} together with a necessary auxiliary lemma. Section \ref{sec3} is dedicated to showing the $C^{0,1}$ property of solutions, and further we in Section \ref{sec4} demonstrate the $C^{1,\alpha}$-regularity for \eqref{main} based on the compactness result and the improvement of flatness idea. Finally, the $C^1$-differentiability conclusion of solutions to \eqref{main} is justified in Section \ref{sec5}.

\section{Preliminaries}
\label{sec2}

We in this part present some structure conditions on \eqref{main}, and give the definition of viscosity solution as well as a useful auxiliary result. Throughout this manuscript, we always assume the nonlocal operator $\mathcal{I}_\sigma$ is uniformly elliptic. In addition, we assume that the following following conditions hold:
\begin{itemize}
\item [($A_1$)] The viscosity solution $u$ is of $L^1_\sigma(\mathbb{R}^N)$ for the fractional operator is well-defined on the complement of some bounded domain;
\item [($A_2$)] The source term $f$ belongs to $C(B_1)\cap L^\infty(B_1)$;
\item [($A_3$)] In the scenario $\Phi(x,Du)=|Du|^p$ with $0\le p<\infty$, we suppose that the Hamiltonian term $H:B_1\times \mathbb{R}^N\rightarrow \mathbb{R}$ is continuous
and that there exist two constants $\mathcal{M},\mathcal{H}$ such that
    \begin{equation}
    \label{2-1}
    |H(x,\xi)|\le \mathcal{M}+\mathcal{H}|\xi|^q, \quad q\ge0,
    \end{equation}
    for every $x\in B_1, \xi\in \mathbb{R}^N$;
\item [($A_4$)] Let $\{K_{ij}\}_{ij}\subset \mathcal{K}$ be a collection of kernels such that there exist numbers $\{k_{ij}\}\subset[\lambda,\Lambda]$ and a modulus of continuity $\omega$ satisfying
    $$
    \left|K_{ij}|x|^{N+\sigma}-k_{ij}\right|\le\omega(|x|) \quad\text{for } |x|\le1.
    $$
\end{itemize}
Here let us point out that a typical model fitting the condition $(A_3)$ is
$$
H(x,\xi)=h(x)|\xi|^q,
\quad \text{or more generally}\quad
H(x,\xi)=\sum^m_{i=1}h_i(x)|\xi|^{q_i},
$$
with bounded functions $h,h_i$, $i=1,2,\ldots,m$, and $q=\max\{q_1,q_2,\ldots,q_m\}$. The last assumption aforementioned ensures the nonlocal elliptic operator $\mathcal{I}_\sigma$ approaches a fully nonlinear (local) operator $F$ with uniform $(\lambda,\Lambda)$-ellipticity when $\sigma$ tends to 2.

In the case $\Phi(x,Du)=\gamma(|Du|)$, the conditions ($A_1$), ($A_2$) and ($A_4$) on $u,f,\mathcal{I}_\sigma$ are still satisfied separately.
Next we present the conditions required on the degeneracy law $\gamma$ and on the Hamiltonian term $H$:
\begin{itemize}
\item[($A_5$)] $\gamma:[0,+\infty)\rightarrow[0,+\infty)$ is a modulus of continuity carrying $\gamma(1)\ge1$ and its inverse $\gamma^{-1}$ fulfills the Dini condition;
\item[($A_6$)] $H:B_1\times \mathbb{R}^N\rightarrow \mathbb{R}$ is continuous and there is a constant $\mathcal{M}$ such that
    \begin{equation}
    \label{5-1}
    |H(x,\xi)|\le \mathcal{M}(1+\gamma(|\xi|))
    \end{equation}
    for every $x\in B_1$ and $\xi\in \mathbb{R}^N$.
\end{itemize}

Here the assumption $\gamma(1)\ge1$ is just a normalization. By modulus of continuity we mean an increasing function $\omega=\omega(t)$ defined on an interval $(0,T]$, or on $\mathbb{R}^+_0=(0,+\infty)$, into $\mathbb{R}^+_0$ satisfying $\lim_{t\rightarrow0}\omega(t)=0$. We say that a modulus of continuity $\omega$ meets Dini condition if
\begin{equation*}
\int^\tau_0\frac{\omega(t)}{t}\,dt<\infty
\end{equation*}
for some $\tau>0$.

Throughout this work, we denote as $C$ a positive constant that may vary from line to line. Relevant dependencies on parameters are indicated in parentheses, i.e., $C(p,q,N)$ means that $C$ depends on $p,q,N$. Besides, we say that a constant is universal if it depends at most upon the structure constants above. 

Next, we give the notion of viscosity solutions. If there is no danger of a confusion, we shall drop the subscript $\sigma$ of the nonlocal operator $\mathcal{I}_\sigma$ in the sequel.s
For generality, we define the solutions to the first order perturbation of \eqref{main}, i.e.,
\begin{equation}
\label{main2}
-\Phi(x,Du+\xi)\mathcal{I}_\sigma(u,x)+H(x,Du+\xi)=f(x) \quad\text{in }  B_1, 
\end{equation}
which will be utilized in proving the regularity properties of solutions to \eqref{main}.
For $u:\mathbb{R}^N\rightarrow\mathbb{R}$ and $K\in\mathcal{K}$, let
$$
I_{K}[\Omega](u,x)=C_{N,\sigma}\mathrm{P.V.}\int_\Omega(u(x+y)-u(x))K(y)\,dy
$$
where $\Omega\subseteq\mathbb{R}^N$ is a measurable set and the principal value coincides with the usual integral whenever $\mathrm{dist}(0,\Omega)>0$.
We say that a solution $u$ is normalized if $\|u\|_{L^\infty(B_1)}\le1$.

\begin{definition}[viscosity solutions]
\label{def1}
A function $u\in C(\overline{B}_1)\cap L^1_\sigma(\mathbb{R}^N)$ is a viscosity subsolution (respectively, supersolution) to \eqref{main2},
if for any $x_0\in B_1$ and any test function $\varphi(x)\in C^2(\mathbb{R}^N)$ such that $u-\varphi$ attains a local maximum (respectively, minimum) at $x_0$, we have
$$
-\Phi(x_0,D\varphi(x_0)+\xi)\mathcal{I}^\delta(u,\varphi,x_0)+H(x_0,D\varphi(x_0)+\xi)\le (\ge) f(x_0)
$$
where
$$
\mathcal{I}^\delta(u,\varphi,x_0)=\inf_j\sup_i\left(I_{K_{ij}}[B_\delta](\varphi,x_0)+I_{K_{ij}}[B^c_\delta](u,x_0)\right).
$$
Here $\{K_{ij}\}_{ij}\subset \mathcal{K}$ is a family of kernels, $B_\delta$ is a neighborhood of $x_0$ and $B^c_\delta$ denotes the complement of $B_\delta$ in $\mathbb{R}^N$.
A function is a viscosity solution to \eqref{main2} if it is both a viscosity subsolution and a supersolution.
\end{definition}

\begin{remark}
Alternatively, in the definition above, the requirements of viscosity subsolutions can be substituted with the following: for any $\varphi\in C^2(B_\delta(x_0))$ with any $B_\delta(x_0)\subset B_1$ such that $u-\varphi$ reaches a local maximum at $x_0$, we have
$$
-\Phi(x_0,D\varphi(x_0)+\xi)\mathcal{I}(\varphi_\delta,x_0)+H(x_0,D\varphi(x_0)+\xi)\le f(x_0)
$$
with the test function
\begin{equation*}
\varphi_\delta(x)=\begin{cases}\varphi(x), \quad x\in B_\delta(x_0),\\
u(x),  \quad x\in B^c_\delta(x_0),
\end{cases}
\end{equation*}
where
$$
\mathcal{I}(\varphi_\delta,x_0)=\inf_j\sup_i\left(I_{K_{ij}}[B_\delta](\varphi,x_0)+I_{K_{ij}}[B^c_\delta](u,x_0)\right).
$$
\end{remark}

We conclude this section by providing an important lemma that shall be employed several times later.

\begin{lemma}
\label{lem2-1}
Consider the cone $\mathcal{D}=\left\{z\in B_{\delta_0|\overline{a}|}:|\langle\overline{a},z\rangle|\ge(1-\eta_0)|\overline{a}||z|\right\}$ with $\overline{a}=\overline{x}-\overline{y}$, $0<|\overline{a}|\le\frac{1}{8}$ and $\delta_0,\eta_0\in\left(0,\frac{1}{2}\right)$. Let
$$
\delta^2u(x,z)=u(x)-u(x+z)+Du(x)\cdot z, \quad  x,z\in\mathbb{R}^N
$$
for a differentiable function $u:\mathbb{R}^N\rightarrow\mathbb{R}$. Set $\phi(x,y)=\varphi(|x-y|)$.
\begin{itemize}
\item[(1)] If
\begin{equation*}
\varphi(t)=\begin{cases}t-\frac{1}{4}t^{1+\beta}, \quad t\in [0,t_0],\\
\varphi(t_0),  \quad t\in (t_0,\infty),
\end{cases}
\end{equation*}
for some $t_0\in(0,1)$ and $\beta\in(0,1)$, then there is an absolute constant $\varepsilon\in\left(0,\frac{1}{2}\right)$ such that, for $\eta_0,\delta_0\le\min\{\varepsilon, \delta_1|\overline{a}|^\beta\}$ with $\delta_1=\frac{\beta(1+\beta)}{384}$, we obtain
\begin{equation*}
\frac{\beta(1+\beta)}{64}|\overline{a}|^{\beta-1}|z|^2\le \delta^2\phi(\cdot,\overline{y})(\overline{x},z)\le\frac{\beta(1+\beta)}{4}|\overline{a}|^{\beta-1}|z|^2
\end{equation*}
for every $z\in \mathcal{D}$.

\item[(2)] If $\varphi(t)=t^\gamma$ with $\gamma\in(0,1)$, then there exists $\varepsilon\in\left(0,\frac{1}{2}\right)$, depending only on $\gamma$, such that for any $\eta_0,\delta_0\le(0, \varepsilon]$
we infer that
\begin{equation*}
\frac{\gamma(1-\gamma)2^\gamma}{32}|\overline{a}|^{\gamma-2}|z|^2\le \delta^2\phi(\cdot,\overline{y})(\overline{x},z)\le\gamma(1-\gamma)|\overline{a}|^{\gamma-2}|z|^2
\end{equation*}
for every $z\in \mathcal{D}$.
\end{itemize}
\end{lemma}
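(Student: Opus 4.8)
\textbf{Proof proposal for Lemma \ref{lem2-1}.}

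The plan is to reduce both parts to a one-dimensional Taylor expansion along the direction $\overline a$, exploiting that $\phi(x,\overline y)=\varphi(|x-\overline y|)$ depends only on the scalar quantity $r=|x-\overline y|$, with $r=|\overline a|$ at $x=\overline x$. First I would record the gradient and Hessian of $x\mapsto\varphi(|x-\overline y|)$: writing $e=\overline a/|\overline a|$, one has $D\phi(\overline x,\overline y)=\varphi'(|\overline a|)\,e$ and
$$
D^2\phi(\overline x,\overline y)=\varphi''(|\overline a|)\,e\otimes e+\frac{\varphi'(|\overline a|)}{|\overline a|}\bigl(\mathrm{Id}-e\otimes e\bigr).
$$
Then $\delta^2\phi(\cdot,\overline y)(\overline x,z)=\phi(\overline x,\overline y)-\phi(\overline x+z,\overline y)+D\phi(\overline x,\overline y)\cdot z$ is, by Taylor's theorem with integral remainder, equal to $-\tfrac12\langle D^2\phi(\xi_z,\overline y)z,z\rangle$ for some point $\xi_z$ on the segment $[\overline x,\overline x+z]$. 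For $z\in\mathcal D$ we have $|z|\le\delta_0|\overline a|$, so $\xi_z$ stays in a ball of radius $\delta_0|\overline a|$ around $\overline x$; combined with the cone opening condition $|\langle\overline a,z\rangle|\ge(1-\eta_0)|\overline a||z|$, the unit direction $z/|z|$ is within $O(\sqrt{\eta_0})$ of $\pm e$. The strategy is therefore: (a) show $|x-\overline y|$ is comparable to $|\overline a|$ on the relevant segment, (b) show the $e\otimes e$ component of the Hessian dominates, and (c) bound $\varphi''$ and $\varphi'/r$ from above and below on the interval $[(1-\delta_0)|\overline a|,(1+\delta_0)|\overline a|]$.

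For part (1), on $[0,t_0]$ we have $\varphi'(t)=1-\tfrac{1+\beta}{4}t^\beta$ and $\varphi''(t)=-\tfrac{\beta(1+\beta)}{4}t^{\beta-1}<0$. Since $|\overline a|\le\tfrac18<1$ and $\delta_0\le\tfrac12$, the relevant radii $r\in[(1-\delta_0)|\overline a|,(1+\delta_0)|\overline a|]$ lie in $[0,t_0]$ provided $t_0$ is not too small relative to $|\overline a|$ — this needs the constraint $\delta_0\le\varepsilon$ and the structure of the cone, and I would check that the truncation at $t_0$ is never activated on the segment. Then $-\varphi''(r)=\tfrac{\beta(1+\beta)}{4}r^{\beta-1}$ is comparable to $\tfrac{\beta(1+\beta)}{4}|\overline a|^{\beta-1}$ up to factors $(1\pm\delta_0)^{\beta-1}$, which for $\delta_0\le\varepsilon$ small lies between, say, $\tfrac12$ and $2$. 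The cross term $\tfrac{\varphi'(r)}{r}$ is positive and of order $r^{-1}\sim|\overline a|^{-1}$, which is of \emph{larger} order than $|\overline a|^{\beta-1}$ since $\beta>0$; this is the delicate point, because it means the full Hessian is not sign-definite. Here the cone condition is essential: on $\mathcal D$ the component of $z$ orthogonal to $e$ satisfies $|z_\perp|^2\le 2\eta_0|z|^2$, so the contribution of the $(\mathrm{Id}-e\otimes e)$ term to $\langle D^2\phi\, z,z\rangle$ is at most $\tfrac{\varphi'(r)}{r}\cdot 2\eta_0|z|^2$, and choosing $\eta_0\le\delta_1|\overline a|^\beta$ with $\delta_1=\tfrac{\beta(1+\beta)}{384}$ makes this term absorbable by a fraction of the dominant $|\varphi''(r)|\,|z|^2$ term — the factor $|\overline a|^\beta$ in the constraint on $\eta_0$ precisely compensates the order mismatch between $r^{-1}$ and $r^{\beta-1}$. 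Tracking the constants, one gets $\tfrac{\beta(1+\beta)}{64}|\overline a|^{\beta-1}|z|^2\le\delta^2\phi\le\tfrac{\beta(1+\beta)}{4}|\overline a|^{\beta-1}|z|^2$, where the lower constant $\tfrac{1}{64}$ leaves room for all the $(1\pm\delta_0)$ and $\eta_0$ losses and the upper constant $\tfrac14$ is just $|\varphi''|$ at $r=|\overline a|$ with the orthogonal part only adding a controlled amount.

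For part (2), with $\varphi(t)=t^\gamma$, $\gamma\in(0,1)$, we have $\varphi'(t)=\gamma t^{\gamma-1}>0$ and $\varphi''(t)=\gamma(\gamma-1)t^{\gamma-2}<0$, and now both $|\varphi''(r)|\sim r^{\gamma-2}$ and $\varphi'(r)/r\sim r^{\gamma-2}$ are of the \emph{same} order $|\overline a|^{\gamma-2}$, so no $|\overline a|$-dependent smallness of $\eta_0$ is needed — a fixed $\varepsilon=\varepsilon(\gamma)$ suffices. Again $-\varphi''(r)=\gamma(1-\gamma)r^{\gamma-2}$ is comparable to $\gamma(1-\gamma)|\overline a|^{\gamma-2}$ up to $(1\pm\delta_0)^{\gamma-2}$; the upper bound $\gamma(1-\gamma)|\overline a|^{\gamma-2}|z|^2$ comes from the fact that $\varphi'(r)/r=\gamma r^{\gamma-2}>0$ but it multiplies $|z_\perp|^2$ which is negative-sign-free yet bounded, so $\langle D^2\phi\,z,z\rangle\le\varphi''(r)\langle e,z\rangle^2+\tfrac{\varphi'(r)}{r}|z_\perp|^2$; a short computation using $|z_\perp|^2\le 2\eta_0|z|^2$ and $\langle e,z\rangle^2\ge(1-2\eta_0)|z|^2$ and $|\varphi''(r)|\ge\gamma(1-\gamma)\varphi'(r)/r$ pins this down, and at $r=|\overline a|$ exactly (with $\delta_0,\eta_0$ taken to $0$) the estimate collapses to $\gamma(1-\gamma)|\overline a|^{\gamma-2}|z|^2$, explaining the clean upper constant. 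The lower bound $\tfrac{\gamma(1-\gamma)2^\gamma}{32}|\overline a|^{\gamma-2}|z|^2$ again absorbs the $(1+\delta_0)^{\gamma-2}\le 2^{2-\gamma}$ loss (when $\delta_0\le\tfrac12$), the cone loss, and leaves the factor $2^\gamma/32$ to spare.

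The main obstacle is part (1): the mismatch between the radial order $|\overline a|^{\beta-1}$ of the ``good'' curvature and the transverse order $|\overline a|^{-1}$ of the ``bad'' cross term is genuine, and it is exactly why the hypothesis forces $\eta_0,\delta_0\lesssim|\overline a|^\beta$ rather than a fixed constant. Verifying carefully that (i) the relevant radii never leave $[0,t_0]$ so that the explicit formula for $\varphi''$ applies, (ii) the point $\xi_z$ in the Taylor remainder also stays in the good region, and (iii) the constants $\tfrac{1}{64}$, $\delta_1=\tfrac{\beta(1+\beta)}{384}$ are chosen consistently so that all the multiplicative losses from $(1\pm\delta_0)^{\beta-1}$, from $2\eta_0$, and from moving $\xi_z$ off $\overline x$ fit inside the claimed window — this bookkeeping is the real content. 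Part (2) is structurally easier because everything is scale-homogeneous and no $|\overline a|$-dependent constraint is needed; the only care is the sign of the transverse term, which helps the upper bound but must be controlled in the lower bound.
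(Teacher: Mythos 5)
Your proposal follows essentially the same route as the paper: Taylor's expansion reducing $\delta^2\phi(\cdot,\overline{y})(\overline{x},z)$ to the Hessian quadratic form at an intermediate point, the radial/tangential decomposition of that Hessian, and the cone condition absorbing the transverse term $\varphi'(r)/r$, with the $|\overline{a}|^\beta$-smallness of $\eta_0,\delta_0$ in part (1) compensating exactly the order mismatch you single out as the key difficulty. The only bookkeeping caveat is that at the intermediate point the relevant direction is $\widehat{\overline{a}+tz}$ rather than $e=\overline{a}/|\overline{a}|$, so the transverse component is of size $O((\eta_0+\delta_0))|z|^2$ rather than $O(\eta_0)|z|^2$ — which is precisely why the hypothesis bounds $\delta_0$ as well as $\eta_0$ by $\delta_1|\overline{a}|^\beta$, and is handled in the paper through the quantity $\eta=\frac{1-\eta_0-\delta_0}{1+\delta_0}$.
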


\begin{proof}
(1) By means of Taylor's expansion, for $z\in \mathcal{D}$ we get
$$
\phi(\overline{x}+z,\overline{y})=\phi(\overline{x},\overline{y})+D_x\phi(\overline{x},\overline{y})\cdot z+\frac{1}{2}\langle z,D^2_x\phi(\overline{x}+tz,\overline{y})z\rangle,
$$
with $|t|\le 1$, and further
$$
\delta^2\phi(\cdot,\overline{y})(\overline{x},z)=-\frac{1}{2}\langle z,D^2_x\phi(\overline{x}+tz,\overline{y})z\rangle.
$$
A direct computation shows that
\begin{equation}
\label{2-2}
\langle z,D^2_x\phi(\overline{x}+tz,\overline{y})z\rangle
=\frac{\varphi'(|\overline{a}+tz|)}{|\overline{a}+tz|}\bigl(|z|^2-\langle\widehat{\overline{a}+tz},z\rangle^2\bigr)
+\varphi''(|\overline{a}+tz|)\langle\widehat{\overline{a}+tz},z\rangle^2
\end{equation}
with $\hat{x}=\frac{x}{|x|}$. For $z\in\mathcal{D}$ and $|t|\le 1$, we obtain
\begin{equation}
\label{2-3}
|\langle\overline{a}+tz,z\rangle|\ge(1-\eta_0)|\overline{a}||z|-|t||z|^2\ge(1-\eta_0-\delta_0)|\overline{a}||z|
\end{equation}
and
\begin{equation}
\label{2-4}
(1-\delta_0)|\overline{a}|\le |\overline{a}+tz|\le(1+\delta_0)|\overline{a}|.
\end{equation}
Let $\varepsilon\in\left(0,\frac{1}{2}\right)$ be small enough such that that, for $\eta_0,\delta_0\in(0,\varepsilon]$, we have
$$
\eta=\frac{1-\eta_0-\delta_0}{1+\delta_0}\ge\frac{1-2\varepsilon}{1+\varepsilon}\ge\frac{1}{\sqrt{2}}
$$
and
\begin{align*}
\frac{1-\eta^2}{1-\delta_0}
&=\frac{1}{1-\delta_0}\frac{2\delta_0+\eta_0}{1+\delta_0}\frac{2-\eta_0}{1+\delta_0}
=\frac{2\delta_0+\eta_0}{1-\delta_0}\frac{2-\eta_0}{(1+\delta_0)^2}\\
&\le2\frac{2\delta_0+\eta_0}{1-\delta_0}\le2\frac{2\delta_0+\eta_0}{1-\varepsilon}\le4(2\delta_0+\eta_0).
\end{align*}
Notice that
$$
\varphi'(|\overline{a}|)=1-\frac{1+\beta}{4}|\overline{a}|^\beta>0 \quad \text{and}\quad \varphi''(|\overline{a}|)=-\frac{\beta(1+\beta)}{4}|\overline{a}|^{\beta-1}<0.
$$

Then via \eqref{2-2}--\eqref{2-4} we arrive at
\begin{align*}
\varphi''(|\overline{a}+tz|)|z|^2&\le \langle z,D^2_x\phi(\overline{x}+tz,\overline{y})z\rangle\\
&\le\frac{|z|^2}{(1-\delta_0)|\overline{a}|}\left(1-\frac{(1-\eta_0-\delta_0)^2}{(1+\delta_0)^2}\right)-\frac{\beta(1+\beta)}{4}|\overline{a}+tz|^{\beta-1}
\langle\widehat{\overline{a}+tz},z\rangle^2\\
&=\frac{|z|^2}{(1-\delta_0)|\overline{a}|}(1-\eta^2)-\frac{\beta(1+\beta)}{4}|\overline{a}+tz|^{\beta-1}
\langle\widehat{\overline{a}+tz},z\rangle^2.
\end{align*}
Let us consider
$$
\langle\widehat{\overline{a}+tz},z\rangle^2\ge\frac{(1-\eta_0-\delta_0)^2|\overline{a}|^2|z|^2}{(1+\delta_0)^2|\overline{a}|^2}\ge\frac{1}{2}|z|^2
$$
and
\begin{align*}
-2|\overline{a}|^{\beta-1}
&\le-(1-\delta_0)^{\beta-1}|\overline{a}|^{\beta-1}\le-|\overline{a}+tz|^{\beta-1}\\
&\le-(1+\delta_0)^{\beta-1}|\overline{a}|^{\beta-1}\le-2^{\beta-1}|\overline{a}|^{\beta-1}\le-2^{-1}|\overline{a}|^{\beta-1}.
\end{align*}
It follows from the previous three displays that
\begin{align*}
-\frac{\beta(1+\beta)}{2}|\overline{a}|^{\beta-1}|z|^2&\le\langle z,D^2_x\phi(\overline{x}+tz,\overline{y})z\rangle\\
&\le4(\eta_0+2\delta_0)\frac{|z|^2}{|\overline{a}|}-\frac{\beta(1+\beta)}{16}\frac{|z|^2}{|\overline{a}|^{1-\beta}}\\
&=\frac{|z|^2}{|\overline{a}|}\left(4(\eta_0+2\delta_0)-\frac{\beta(1+\beta)}{16}|\overline{a}|^{\beta}\right).
\end{align*}
Let $\eta_0,\delta_0\le\delta_1|\overline{a}|^{\beta}$. Then
$$
4(\eta_0+2\delta_0)-\frac{\beta(1+\beta)}{16}|\overline{a}|^{\beta}\le\left(12\delta_1-\frac{\beta(1+\beta)}{16}\right)|\overline{a}|^{\beta}.
$$
Finally, letting $12\delta_1=\frac{\beta(1+\beta)}{32}$, we arrive at
$$
\frac{\beta(1+\beta)}{64}|\overline{a}|^{\beta-1}|z|^2\le\delta^2\phi(\cdot,\overline{y})(\overline{x},z)\le\frac{\beta(1+\beta)}{4}|\overline{a}|^{\beta-1}|z|^2.
$$

(2) Analogous to the proof of (1), we may conclude that
$$
-2\gamma(1-\gamma)|\overline{a}|^{\gamma-2}|z|^2\le\langle z,D^2_x\phi(\overline{x}+tz,\overline{y})z\rangle\le-\frac{\gamma(1-\gamma)2^\gamma}{8}|\overline{a}|^{\gamma-2}|z|^2
$$
for $|t|\le1, z\in\mathcal{D}$, and then get this assertion in Lemma \ref{lem2-1} (2). For the details, one also can refer to \cite[Lemma 2.2 (i)]{BT25}.
\end{proof}

\section{Lipschitz continuity of solutions}
\label{sec3}

In Section \ref{sec3} and Section \ref{sec4}, we consider the case that $\Phi(x,Du)=|Du|^p$. This section is mainly devoted to providing a compactness result for proving the approximation result in Lemma \ref{lem3-1}. To be precise, we are ready to establish the Lipschitz continuity for viscosity solutions to \eqref{main2} under the growth condition \eqref{2-1} on the Hamiltonian term with $0\le q\le p+\sigma$ ($\sigma\in(1,2)$), making use of the nonlocal version of well-known Ishii--Lions method developed by \cite{BCCI12, BCI11}. The case $p<q\le p+\sigma$ is technically more challenging than the case $0\le q\le p$, 
because we have to analyze the behaviour of \eqref{main2} when the norm of vector $\xi$ is large.

First we demonstrate the Lipschitz continuity of viscosity solutions to \eqref{main2} for the range $0\le q\le p$ in \eqref{2-1}.

\begin{lemma}
\label{lem3-1}
Let $\sigma\in(1,2)$ and assume that the conditions $(A_1)$--$(A_3)$ hold with $0\le q\le p$. Let $u\in C(\overline{B}_1)$ be a viscosity solution to \eqref{main2} with $\xi\in\mathbb{R}^N$. Then $u$ is locally Lipschitz continuous in $B_1$, that is, there exists a universal constant $C_{\rm lip}\ge1$, independent of $\xi$, such that
$$
|u(x)-u(y)|\le C_{\rm lip}|x-y|
$$
for every $x,y\in B_{\frac12}$. Furthermore, the $C_{\rm lip}$ is uniformly bounded as $\sigma\rightarrow2$.
\end{lemma}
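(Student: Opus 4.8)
The plan is to implement the nonlocal Ishii--Lions method in the regime $0 \le q \le p$, where the key simplification is that the Hamiltonian is dominated by the gradient-dependent factor $\Phi(x,Du+\xi) = |Du+\xi|^p$ in a way that keeps all bad terms controllable uniformly in $\xi$. First I would fix $x_0, y_0 \in B_{1/2}$ and, after a standard localization argument (subtracting a multiple of $|x-x_0|^2$ to confine the maximum to an interior ball), consider the auxiliary function
$$
\Psi(x,y) = u(x) - u(y) - L\phi(x,y) - \frac{M}{2}|x-x_0|^2 - \frac{M}{2}|y-x_0|^2,
$$
where $\phi(x,y) = \varphi(|x-y|)$ with $\varphi$ the concave function from Lemma \ref{lem2-1}(1), namely $\varphi(t) = t - \tfrac14 t^{1+\beta}$ truncated at $t_0$. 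The goal is to choose the constants $L, M$ universal (and independent of $\sigma$ near $2$) so that $\sup \Psi \le 0$, which yields the Lipschitz bound. Assume for contradiction that the maximum is positive, attained at an interior pair $(\overline{x}, \overline{y})$ with $\overline{x} \ne \overline{y}$; write $\overline{a} = \overline{x} - \overline{y}$ and note $|\overline{a}| \le \tfrac18$ for $L$ large.

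The next step is to extract viscosity information at $(\overline{x}, \overline{y})$. From the maximum property, $x \mapsto u(x) - [L\phi(x,\overline{y}) + \tfrac{M}{2}|x-x_0|^2]$ has a max at $\overline{x}$, and $y \mapsto u(y) + [\,-L\phi(\overline{x},y) - \tfrac{M}{2}|y-x_0|^2 + \text{const}\,]$ has a min at $\overline{y}$. Using the equation \eqref{main2} as subsolution at $\overline{x}$ and supersolution at $\overline{y}$, and testing with these smooth functions (plus the nonlocal pieces $I_{K_{ij}}[B^c_\delta]$ evaluated on $u$ itself), I get two inequalities involving $\mathcal{I}^\delta(u,\cdot,\overline{x})$ and $\mathcal{I}^\delta(u,\cdot,\overline{y})$. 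Denote the common gradient $\overline{p} = L D_x\phi(\overline{x},\overline{y}) + M(\overline{x}-x_0)$ at $\overline{x}$, and similarly at $\overline{y}$; the crucial observation is $|D_x\phi| = |\varphi'(|\overline{a}|)| \in [\tfrac12, 1]$, so $|\overline{p} + \xi|$ and $|\overline{q}+\xi|$ are comparable to $L$ once $L \gg M + |\xi|$ --- but since $\xi$ can be arbitrarily large, I instead keep the factors $|\overline{p}+\xi|^p$ on both sides and exploit that they appear multiplied against nonlocal terms of the same sign. Subtracting the two inequalities and using $|H(x,\eta)| \le \mathcal{M} + \mathcal{H}|\eta|^q$ with $q \le p$, the Hamiltonian contribution is bounded by $\mathcal{H}(|\overline{p}+\xi|^q + |\overline{q}+\xi|^q) + 2\mathcal{M} \lesssim \mathcal{H}(1 + |\overline{p}+\xi|^p) + \ldots$, which can be absorbed into the elliptic term.

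The heart of the matter is estimating the nonlocal quantity
$$
|\overline{p}+\xi|^p \,\mathcal{I}^\delta(u,\phi(\cdot,\overline{y}),\overline{x}) - |\overline{q}+\xi|^p \,\mathcal{I}^\delta(u,-\phi(\overline{x},\cdot),\overline{y})
$$
from below by a strictly negative, coercive-in-$L$ quantity. Here I use the standard Ishii--Lions splitting of $\mathbb{R}^N$ into the cone $\mathcal{D}$ from Lemma \ref{lem2-1} and its complement. On the good cone $\mathcal{D}$, Lemma \ref{lem2-1}(1) gives the two-sided bound $\delta^2\phi(\cdot,\overline{y})(\overline{x},z) \ge c\,\beta(1+\beta)|\overline{a}|^{\beta-1}|z|^2$, and integrating against the kernel $K_{ij}(z) \asymp |z|^{-N-\sigma}$ over $\mathcal{D} \cap B_r$ produces a term of order $-c\,L\,|\overline{a}|^{\beta-1}\,r^{2-\sigma}$ (times $|\overline{p}+\xi|^p$), the dominant negative contribution. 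On $\mathcal{D}^c$ and on $B_r^c$ the second differences of $u$ are controlled using the concavity of $\varphi$, the bound $|u| \le \|u\|_\infty$, and the $L^1_\sigma$ tail; these contribute positive terms of lower order in $L$ or absorbable size. Matching the $\overline{x}$- and $\overline{y}$-integrals requires care because the kernels $K_{ij}$ need not be translation-invariant, but assumption $(A_4)$ (or merely the $(\lambda,\Lambda)$-bounds) lets me compare them up to the modulus $\omega(|z|)$. Choosing $\beta$ small, then $L$ large depending only on the structure constants, $\mathcal{M}$, $\mathcal{H}$, $\|u\|_\infty$, $\|f\|_\infty$ and $\|u\|_{L^1_\sigma}$ --- and observing that the factor $r^{2-\sigma}$ stays bounded below as $\sigma \to 2$ --- forces the contradiction. \textbf{The main obstacle} I anticipate is precisely the interaction between the possibly huge factor $|\overline{p}+\xi|^p$ (which does not help, since it multiplies \emph{both} the useful negative term and the error terms) and the Hamiltonian growth $|\xi|^q$: one must verify that after dividing through by $|\overline{p}+\xi|^p$ the residual Hamiltonian error $\mathcal{H}|\overline{p}+\xi|^{q-p} + \mathcal{M}|\overline{p}+\xi|^{-p} + \|f\|_\infty |\overline{p}+\xi|^{-p}$ remains bounded uniformly in $\xi$, which is exactly where $q \le p$ is used, together with ensuring the concavity gain $L|\overline{a}|^{\beta-1} r^{2-\sigma}$ beats it; the bookkeeping of the cone-splitting constants so that everything is uniform as $\sigma\to 2$ is the other delicate point.
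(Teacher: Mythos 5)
Your overall architecture (doubling of variables with the truncated concave $\varphi(t)=t-\tfrac14 t^{1+\beta}$, the cone $\mathcal{D}$ of Lemma \ref{lem2-1}(1), and the dominant negative term of size $L|\overline{a}|^{\beta-1}(\delta_0|\overline{a}|)^{2-\sigma}$, uniform as $\sigma\to2$) is the same as the paper's. However, there is a genuine gap at the one point that is the actual crux of Lemma \ref{lem3-1}: the possible degeneracy of the factors $|\overline{p}+\xi|^p$ and $|\overline{q}+\xi|^p$. You notice that $\xi$ may be arbitrarily large, and propose to ``keep the factors on both sides,'' but you never explain how to combine $|\overline{p}+\xi|^p\,\mathcal{I}^\delta(\cdot,\overline{x})-|\overline{q}+\xi|^p\,\mathcal{I}^\delta(\cdot,\overline{y})$ into the Ishii--Lions difference when the two multipliers are different and the signs of the nonlocal quantities are unknown; and your final reduction does in the end divide by $|\overline{p}+\xi|^p$, claiming the residual error $\mathcal{H}|\overline{p}+\xi|^{q-p}+(\mathcal{M}+\|f\|_{L^\infty})|\overline{p}+\xi|^{-p}$ is bounded ``exactly because $q\le p$.'' That claim is false without a lower bound on $|\overline{p}+\xi|$: if $\xi$ is close to $-\overline{p}$ (a vector of size comparable to $L$ pointing against the test-function gradient), then $|\overline{p}+\xi|$ can be arbitrarily small or zero, all three residual terms blow up (note $q-p\le 0$ makes $|\overline{p}+\xi|^{q-p}$ \emph{large}, not small, for small arguments), and the subsolution inequality at $\overline{x}$ carries essentially no elliptic information because the degenerate factor annihilates the nonlocal term. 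So the bookkeeping you flag as ``the main obstacle'' is not merely delicate; as written it does not close.

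The paper resolves this by a dichotomy in $|\xi|$ that your proposal omits. If $|\xi|\ge A:=2L$, then $|Dh(\overline{x})+\xi|\ge|\xi|-|Dh(\overline{x})|\ge L$ and $|D\eta(\overline{y})+\xi|\ge \tfrac34 L$, so dividing is safe and $q\le p$ together with $L\ge1$ bounds the Hamiltonian quotient by $\mathcal{H}$. If instead $|\xi|<A$, one reruns the argument with a new slope $\tilde L\ge 8A$, so that $|Dh(\overline{x})+\xi|\ge \tilde L/4-A\ge A$ and $|D\eta(\overline{y})+\xi|\ge 9A$; since $A=2L$ is itself universal, $\tilde L$ is universal and independent of $\xi$, which is precisely what makes $C_{\rm lip}$ uniform in $\xi$ as required by the statement. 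Some such mechanism guaranteeing $|\overline{p}+\xi|,|\overline{q}+\xi|\ge\max\{1,cL\}$ (uniformly in $\xi$, with constants independent of $\xi$) must be supplied before your division step; without it the proof does not go through. The remaining differences in your write-up (quadratic localization in both variables instead of the quartic penalty in $y$, a fixed cone radius $r$ instead of $\delta_0|\overline{a}|$ with $\eta_0=\delta_0=\delta_1|\overline{a}|^\beta$) are cosmetic or easily repaired.
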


\begin{proof}
We discuss the cases when $|\xi|$ is large or small separately.

\textbf{Case 1.} $|\xi|$ is large. Suppose that $|\xi|\ge A$ for a number $A>0$ to be chosen later. Consider two smooth functions
\begin{equation}
\label{3-1-1}
\psi(x)=m\left[\left(|x|^2-\frac{1}{4}\right)_+\right]^2 \quad\text{and}\quad \omega(t)=\begin{cases}t-\frac{1}{4}t^{1+\beta},\qquad0\le t\le1,\\
\omega(1),\quad t>1,
\end{cases}
\end{equation}
where $m>0$ and $\beta\in(0,1)$ are parameters to be fixed later. We proceed by utilizing the doubling variables method.
We introduce the auxiliary functions
$$
\phi(x,y)=L\omega(|x-y|)+\psi(y)
$$
and
$$
\Psi(x,y)=u(x)-u(y)-\phi(x,y)
$$
with $L\ge1$ to be determined later. By continuity of the function $\Psi$, there exists a point   $(\overline{x},\overline{y})\in \overline{B}_1\times\overline{B}_1$ such that $\Psi(\overline{x},\overline{y})=\sup_{B_1\times B_1}\Psi(x,y)$. We show that $\Psi(\overline{x},\overline{y})\le0$, which implies the local Lipschitz continuity.
For a contradiction, assume that  $\Psi(\overline{x},\overline{y})>0$. Then
\begin{equation}
\label{3-1-2}
\begin{split}
0&<u(\overline{x})-u(\overline{y})-L\omega(|\overline{x}-\overline{y}|)-\psi(\overline{y}) \\
&\le2\|u\|_{L^\infty(B_1)}-L\omega(|\overline{x}-\overline{y}|)-m\left[\left(|\overline{y}|^2-\frac{1}{4}\right)_+\right]^2.
\end{split}
\end{equation}
Let $m$ be large enough that
$$
m\left[\left(|y|^2-\frac{1}{4}\right)_+\right]^2>2\|u\|_{L^\infty(B_1)} \quad\text{for every } |y|\ge\frac{3}{4}.
$$
It follows that
$$
m\left[\left(\frac{3}{4}\right)^2-\frac{1}{4}\right]^2=m\left(\frac{5}{16}\right)^2>2\|u\|_{L^\infty(B_1)}
$$
and consequently
$$
m>2\left(\frac{16}{5}\right)^2\|u\|_{L^\infty(B_1)}.
 $$
Thus $\overline{y}\in B_{\frac34}$. Let $L\ge1$ be large enough that
$$
2\|u\|_{L^\infty(B_1)}<L\left(\frac{1}{8}-\frac{1}{4}\left(\frac{1}{8}\right)^{1+\beta}\right).
$$
Then
$$
L>16\left(1-\frac{1}{4}\left(\frac{1}{8}\right)^\beta\right)^{-1}\|u\|_{L^\infty(B_1)},
$$
which implies that $|\overline{x}-\overline{y}|<\frac{1}{8}$
and further $|\overline{x}|\le |\overline{x}-\overline{y}|+|\overline{y}|<\frac{7}{8}$.
It follows that $(\overline{x},\overline{y})\in B_\frac{7}{8}\times B_\frac{7}{8}\subset B_1\times B_1$ and $\overline{x}\neq\overline{y}$.
Moreover, by \eqref{3-1-2} we obtain
$$
L\omega(|\overline{x}-\overline{y}|)\le2\|u\|_{L^\infty(B_1)}
$$
which implies that
$$
 |\overline{x}-\overline{y}|\le\frac{4\|u\|_{L^\infty(B_1)}}{L}.
$$

Because $(\overline{x},\overline{y})$ is the maximum point, we have
\begin{equation}
\label{3-1-3}
\begin{cases}
u(x)\le \phi(x,\overline{y})-\phi(\overline{x},\overline{y})+u(\overline{x})=h(x),\quad  x\in B_1,\\
u(y)\ge \phi(\overline{x},\overline{y})-\phi(\overline{x},y)+u(\overline{y})=\eta(y),\quad  y\in B_1.
\end{cases}
\end{equation}
For any $\delta\in\left(0,\frac{1}{8}\right)$, consider the test functions
\begin{equation*}
v(z)=\begin{cases}h(z),\quad z\in B_\delta(\overline{x}),\\
u(z)\quad  \text{ otherwise},
\end{cases}
\end{equation*}
and
\begin{equation*}
w(z)=\begin{cases}\eta(z),  \quad z\in B_\delta(\overline{y}),\\
u(z) \quad\text{ otherwise}.
\end{cases}
\end{equation*}
It follows from the definition of viscosity solution that
\begin{equation}
\label{3-1-3-1}
\begin{cases}
-|Dh(\overline{x})+\xi|^p\mathcal{I}_\sigma(v,\overline{x})+H(\overline{x},Dh(\overline{x})+\xi)\le f(\overline{x}),\\
-|D\eta(\overline{y})+\xi|^p\mathcal{I}_\sigma(w,\overline{y})+H(\overline{y},D\eta(\overline{y})+\xi)\ge f(\overline{y}).
\end{cases}
\end{equation}
We note that
$$
Dh(\overline{x})=L\omega'(|\overline{x}-\overline{y}|)\frac{\overline{x}-\overline{y}}{|\overline{x}-\overline{y}|},
 \quad\text{with}\quad \omega'(t)=1-\frac{1+\beta}{4}t^\beta,
$$
and
$$
D\eta(\overline{y})=-L\omega'(|\overline{x}-\overline{y}|)\frac{\overline{x}-\overline{y}}{|\overline{x}-\overline{y}|}+D\psi(\overline{y}),
\quad\text{with}\quad D\psi(\overline{y})=4m\Big(|\overline{y}|^2-\frac{1}{4}\Big)_+\overline{y}.
$$
Moreover, we have
\begin{equation}
\label{3-1-3-2}
\frac{L}{2}\le|Dh(\overline{x})|\le L
\end{equation}
and
\begin{equation}
\label{3-1-3-3}
\frac{L}{4}\le\frac{L}{2}-4m\Big(|\overline{y}|^2-\frac{1}{4}\Big)_+|\overline{y}|\le|D\eta(\overline{y})|\le L+4m\Big(|\overline{y}|^2-\frac{1}{4}\Big)_+|\overline{y}|\le\frac{5L}{4}
\end{equation}
for large enough $L\ge12m$.

Set $A=2L$. By the assumption $|\xi|\ge A$, we have
\begin{equation*}
\begin{cases}
|Dh(\overline{x})+\xi|\ge |\xi|-|Dh(\overline{x})|\ge L,   \\
|D\eta(\overline{y})+\xi|\ge |\xi|-|D\eta(\overline{y})|\ge \frac{3}{4}L.
\end{cases}
\end{equation*}
Applying the viscosity inequalities above \eqref{3-1-3-1} and the growth condition \eqref{2-1} on $H(\cdot)$, we arrive that
\begin{align}
\label{3-1-4}
&\mathcal{I}_\sigma(v,\overline{x})-\mathcal{I}_\sigma(w,\overline{y}) \nonumber\\
&\qquad\ge\frac{f(\overline{y})}{|D\eta(\overline{y})+\xi|^p}-
\frac{f(\overline{x})}{|D\eta(\overline{x})+\xi|^p}+\frac{H(\overline{x},D\eta(\overline{x})+\xi)}{|D\eta(\overline{x})+\xi|^p}-
\frac{H(\overline{y},D\eta(\overline{y})+\xi)}{|D\eta(\overline{y})+\xi|^p} \nonumber\\
&\qquad\ge-C(p)\frac{\|f\|_{L^\infty(B_1)}}{L^p}-\left(\frac{\mathcal{M}+\mathcal{H}|Dh(\overline{x})+\xi|^q}{|Dh(\overline{x})+\xi|^p}+
\frac{\mathcal{M}+\mathcal{H}|D\eta(\overline{y})+\xi|^q}{|D\eta(\overline{y})+\xi|^p}\right) \nonumber\\
&\qquad\ge-C(p)\frac{\mathcal{M}+\|f\|_{L^\infty(B_1)}}{L^p}-\mathcal{H}\left(|Dh(\overline{x})+\xi|^{q-p}+|D\eta(\overline{y})+\xi|^{q-p}\right) \nonumber\\
&\qquad\ge-C(p,q)(\mathcal{M}+\mathcal{H}+\|f\|_{L^\infty(B_1)}),
\end{align}
where in the last line we have used the facts that $L\ge1$ and $q-p\le0$.

Next, let us evaluate the left-hand side of \eqref{3-1-4}. As in (2.6) in \cite{PT21} or (14) in \cite{APS24}, we obtain \eqref{3-1-5} below. For every $|z|\le\frac{1}{8}$, we find $\overline{x}+z,\overline{y}+z\in B_1$. Then there exists a kernel $K$ in the family $\mathcal{K}_0$ fulfilling
\begin{equation}
\label{3-1-5}
-C(\|f\|_{L^\infty(B_1)}+\|u\|_{L^\infty(B_1)}+\|u\|_{L^1_\sigma}+1)\le J_1+J_2
\end{equation}
with
$$
J_1=I_K[B_\delta](\phi(\cdot,\overline{y}),\overline{x})-I_K[B_\delta](-\phi(\overline{x},\cdot),\overline{y})
$$
and
$$
J_2=I_K[B_{\frac18}\setminus B_\delta](u,\overline{x}))-I_K[B_{\frac18}\setminus B_\delta](u,\overline{y}).
$$
Here $C\ge1$ is a universal constant (depending on $\mathcal{M},\mathcal{H}$).
Let $\overline{a}=\overline{x}-\overline{y}$ and
$$
\mathcal{D}=\left\{z\in B_{\delta_0|\overline{a}|}:|\langle\overline{a},z\rangle|\ge(1-\eta_0)|\overline{a}||z|\right\}
$$
with $\delta_0|\overline{a}|<\frac{1}{8}$ and $\eta_0\in(0,1)$ to be chosen later. For $z\in B_\frac{1}{8}$, from \eqref{3-1-3} we obtain
\begin{equation*}
\begin{cases}
u(\overline{x}+z)-u(\overline{x})\le \phi(\overline{x}+z,\overline{y})-\phi(\overline{x},\overline{y}),\\
u(\overline{y}+z)-u(\overline{y})\ge -\phi(\overline{x},\overline{y}+z)+\phi(\overline{x},\overline{y}),
\end{cases}
\end{equation*}
and further
\begin{equation*}
\begin{cases}
I_K[\mathcal{D}\setminus B_\delta](u,\overline{x}))\le LI_K[\mathcal{D}\setminus B_\delta](\omega,\overline{a}),\\
I_K[\mathcal{D}\setminus B_\delta](u,\overline{y}))\ge -LI_K[\mathcal{D}\setminus B_\delta](\omega,\overline{a})-Cm,
\end{cases}
\end{equation*}
where we employed the smoothness of $\psi$, and the constant $C\ge1$ is uniformly bounded as $\delta\rightarrow0$. In addition, by means of
$$
u(\overline{x}+z)-u(\overline{x})-(u(\overline{y}+z)-u(\overline{y}))\le \phi(\overline{x}+z,\overline{y}+z)-\phi(\overline{x},\overline{y}),
$$
we deduce for $\mathcal{O}=(B_{\frac18}\setminus B_\delta)\setminus(\mathcal{D}\setminus B_\delta)$ that
$$
I_K[\mathcal{O}](u,\overline{x})-I_K[\mathcal{O}](u,\overline{y})\le I_K[\mathcal{O}](\psi,\overline{y})\le Cm,
$$
where the latter inequality follows from the smoothness of $\psi$. At this point, by combining the inequalities above with \eqref{3-1-5}, we obtain
\begin{equation}
\label{3-1-6}
-C(\|f\|_{L^\infty(B_1)}+\|u\|_{L^\infty(B_1)}+\|u\|_{L^1_\sigma}+1)\le J_1+2LI_K[\mathcal{D}\setminus B_\delta](\omega,\overline{a}).
\end{equation}
Here we note that $J_1\rightarrow0$ as $\delta\rightarrow0$, and $I_K[\mathcal{D}\setminus B_\delta](\omega,\overline{a})\rightarrow I_K[\mathcal{D}](\omega,\overline{a})$ as $\delta\rightarrow0$
by the dominated convergence theorem.

Then we concentrate on the term $I_K[\mathcal{D}](\omega,\overline{a})$. Let $l(z)=D_x\omega(|\overline{x}-\overline{y}|)\cdot z$. Then
\begin{align*}
I_K[\mathcal{D}](\omega,\overline{a})&=\int_{\mathcal{D}}(\omega(|\overline{a}+z|)-\omega(|\overline{a}|))K(z)\,dz-\int_{\mathcal{D}}
(l(\overline{x}+z)-l(\overline{x}))K(z)\,dz\\
&=\int_{\mathcal{D}}(\omega(|\overline{a}+z|)-\omega(|\overline{a}|)-D_x\omega(|\overline{x}-\overline{y}|)\cdot z)K(z)\,dz\\
&=-\int_{\mathcal{D}}\delta^2\omega(\cdot,\overline{y})(\overline{x},z)K(z)\,dz,
\end{align*}
with $\omega(x,y)=\omega(|x-y|)$ and $\delta^2\omega$ defined as in Lemma \ref{lem2-1}.
On the first line of the display above, we notice that the second integral equals to 0 by the symmetry of the domain $\mathcal{D}$. Next, we take $\eta_0=\delta_0=\delta_1|\overline{a}|^\beta$ with $\delta_1$ a very small positive number depending only upon $\beta$. In view of Lemma \ref{lem2-1} (1) and
$$
C_{N,\sigma}\frac{\lambda}{|y|^{N+\sigma}}\le K(y)\le C_{N,\sigma}\frac{\Lambda}{|y|^{N+\sigma}},
$$
we arrive at
\begin{align*}
I_K[\mathcal{D}](\omega,\overline{a})&\le -\frac{\beta(\beta+1)C_{N,\sigma}\lambda}{64}|\overline{a}|^{\beta-1}\int_{\mathcal{D}}|z|^{2-N-\sigma}\,dz\\
&\le-\frac{c\beta(\beta+1)C_{N,\sigma}\lambda}{64(2-\sigma)}|\overline{a}|^{\beta-1}\eta_0^\frac{N-1}{2}(\delta_0|\overline{a}|)^{2-\sigma}\\
&=-C|\overline{a}|^{\beta-1+\frac{N-1}{2}\beta+(\beta+1)(2-\sigma)},
\end{align*}
where in the second line we have exploited the estimate on $\int_{\mathcal{D}}|z|^{2-N-\sigma}\,dz$ in Example 1 in \cite{BCCI12}, and $C\ge1$ is a universal constant that is uniformly bounded as $\sigma\rightarrow2$. Note that we can select $\beta>0$ small enough to obtain
\begin{align*}
\vartheta&=\beta-1+\frac{N-1}{2}\beta+(\beta+1)(2-\sigma)\\
&=1-\sigma+\beta\left(3+\frac{N-1}{2}-\sigma\right)\le \frac{1-\sigma}{2}<0
\end{align*}
for $\sigma\in(1,2)$.

As a result, by taking $\eta_0=\delta_0=\delta_1|\overline{a}|^\beta$ and sending $\delta\rightarrow0$ in \eqref{3-1-6}, it follows that
$$
-C(\|f\|_{L^\infty(B_1)}+\|u\|_{L^\infty(B_1)}+\|u\|_{L^1_\sigma}+1)\le -CL|\overline{a}|^\vartheta\le -CL,
$$
which indicates a contradiction for $L\ge1$ large enough.

\textbf{Case 2.} $|\xi|$ is small. Let $|\xi|<A$. Since the argument is similar to Case 1, we only sketch it. Let
$$
\phi(x,y)=\tilde{L}\omega(|x-y|)+\psi(y).
$$
We continue as in Case 1, with $L$ substituted by $\tilde{L}$. Then \eqref{3-1-3-2} and \eqref{3-1-3-3} turn into
$$
\frac{\tilde{L}}{4}\le |Dh(\overline{x})|,|D\eta(\overline{y})|\le \frac{5}{4}\tilde{L}.
$$
Since $|\xi|<A$, we have
\begin{equation*}
\begin{cases}
|Dh(\overline{x})+\xi|\ge|Dh(\overline{x})|-|\xi|\ge\frac{\tilde{L}}{4}-A\ge A,\\
|D\eta(\overline{y})+\xi|\ge\frac{5\tilde{L}}{4}-A\ge 9A,
\end{cases}
\end{equation*}
by taking $\tilde{L}\ge8A$. Moreover, we also have
$$
\mathcal{I}_\sigma(v,\overline{x})-\mathcal{I}_\sigma(w,\overline{y})\ge-C(\mathcal{M}+\mathcal{H}+\|f\|_{L^\infty(B_1)}).
$$
By the same proof in Case 1, we obtain the Lipschitz continuity of $u$ by choosing $\tilde{L}$ sufficiently large.
\end{proof}

In the scenario $p<q\le p+1$, we need pay special attention to restraining the growth of the term $|Du+\xi|^{q-p}$, but the idea is similar to that of Lemma \ref{lem3-1} as well.

\begin{lemma}
\label{lem3-2}
Let $\sigma\in(1,2)$ and assume that the conditions $(A_1)$--$(A_3)$ hold with $p< q\le p+1$. Let $u\in C(\overline{B}_1)$ be a viscosity solution to \eqref{main2} with $\xi\in\mathbb{R}^N$.
Then there exists a universal constant $b_0>0$ such that if
\begin{align}
\label{3-2-1}
\mathcal{H}(1+|\xi|^{q-p})\le b
\end{align}
for some $b\le b_0$, then $u$ is locally H\"{o}lder continuous in $B_1$.
In other words, there exist universal constants $\alpha\in(0,1)$ and $C\ge1$ (independent of $\xi$) satisfying
$$
|u(x)-u(y)|\le C|x-y|^\alpha
$$
for every $x,y\in B_{\frac12}$. Furthermore, the constant $C$ is uniformly bounded as $\sigma\rightarrow2$.
\end{lemma}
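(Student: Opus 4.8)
The plan is to adapt the Ishii--Lions doubling argument of Lemma \ref{lem3-1}, the essential new point being that when $p<q\le p+1$ the term $|Du+\xi|^{q-p}$ now grows with $|\xi|$, so the right-hand side of the analogue of \eqref{3-1-4} is no longer controlled by a $\xi$-independent constant; this is precisely what the smallness hypothesis \eqref{3-2-1} is designed to absorb. I would again argue by contradiction: introduce $\Psi(x,y)=u(x)-u(y)-\phi(x,y)$ with $\phi(x,y)=L\omega(|x-y|)+\psi(y)$, where now $\omega$ is the \emph{H\"older} profile, namely $\omega(t)=t^\alpha$ (or a truncation thereof) for a small exponent $\alpha\in(0,1)$ to be chosen, rather than the Lipschitz profile $t-\tfrac14 t^{1+\beta}$. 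As before, choosing $m$ large localizes the maximum point $\overline y\in B_{3/4}$, and choosing $L$ large forces $|\overline x-\overline y|$ small, so that $(\overline x,\overline y)\in B_{7/8}\times B_{7/8}$ and $\overline x\ne\overline y$; the gradient bounds $|Dh(\overline x)|\sim L|\overline a|^{\alpha-1}$ and $|D\eta(\overline y)|\sim L|\overline a|^{\alpha-1}$ replace \eqref{3-1-3-2}--\eqref{3-1-3-3}, which is the key structural change since $\omega'(t)=\alpha t^{\alpha-1}$ blows up as $t\to0$.

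Next I would write down the two viscosity inequalities for the test functions $v,w$ exactly as in \eqref{3-1-3-1}, subtract, and divide by the gradient factors. On the Hamiltonian side, using \eqref{2-1} and $|Dh(\overline x)+\xi|,\,|D\eta(\overline y)+\xi|\ge c\max\{L|\overline a|^{\alpha-1},|\xi|\}$, the troublesome contribution is bounded by $C\mathcal{H}\bigl(|Dh(\overline x)+\xi|^{q-p}+|D\eta(\overline y)+\xi|^{q-p}\bigr)$, and since $0\le q-p\le 1$ one has $|Dh(\overline x)+\xi|^{q-p}\le C\bigl((L|\overline a|^{\alpha-1})^{q-p}+|\xi|^{q-p}\bigr)$; the $|\xi|^{q-p}$ part is killed by \eqref{3-2-1}, while the $(L|\overline a|^{\alpha-1})^{q-p}$ part must be reabsorbed by the good nonlocal term. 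Thus I would arrive at an inequality of the shape
\begin{equation*}
-C\bigl(\|f\|_{L^\infty(B_1)}+\|u\|_{L^\infty(B_1)}+\|u\|_{L^1_\sigma}+1\bigr)-C\mathcal{H}\bigl(L|\overline a|^{\alpha-1}\bigr)^{q-p}-Cb\le J_1+2LI_K[\mathcal{D}\setminus B_\delta](\omega,\overline a),
\end{equation*}
where the cone $\mathcal{D}$ and the kernel $K$ are produced exactly as in the proof of Lemma \ref{lem3-1}.

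The concavity estimate for the H\"older profile comes from Lemma \ref{lem2-1}(2): with $\varphi(t)=t^\alpha$ it yields $\delta^2\omega(\cdot,\overline y)(\overline x,z)\ge c\,\alpha(1-\alpha)|\overline a|^{\alpha-2}|z|^2$ on $\mathcal{D}$, so, taking $\eta_0=\delta_0=\varepsilon$ constant and estimating $\int_{\mathcal{D}}|z|^{2-N-\sigma}\,dz$ as in \cite{BCCI12}, I get $I_K[\mathcal{D}](\omega,\overline a)\le -c\,\alpha(1-\alpha)\,|\overline a|^{\alpha-2}(\delta_0|\overline a|)^{2-\sigma}=-C_0\,\alpha(1-\alpha)\,|\overline a|^{\alpha-\sigma}$. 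Sending $\delta\to0$, the displayed inequality becomes
\begin{equation*}
-C\bigl(\|f\|_{L^\infty}+\|u\|_{L^\infty}+\|u\|_{L^1_\sigma}+1\bigr)-Cb\le C\mathcal{H}L^{q-p}|\overline a|^{(\alpha-1)(q-p)}-C_0\,\alpha(1-\alpha)L\,|\overline a|^{\alpha-\sigma}.
\end{equation*}
The exponent comparison is the crux: since $q-p\le 1$ and $\alpha-1<0$, one checks $(\alpha-1)(q-p)\ge \alpha-1\ge \alpha-\sigma$ fails in general, so instead I compare powers of $|\overline a|$ after noting $|\overline a|\le 4\|u\|_{L^\infty}/L$ (hence $|\overline a|$ is small once $L$ is large) and that $\alpha-\sigma<\alpha-1\le(\alpha-1)(q-p)$ because $q-p\le1$; thus $|\overline a|^{\alpha-\sigma}$ dominates $|\overline a|^{(\alpha-1)(q-p)}$ for small $|\overline a|$, and the good term wins provided $\mathcal{H}L^{q-p}\le \tfrac12 C_0\alpha(1-\alpha)L\,|\overline a|^{\alpha-\sigma-(\alpha-1)(q-p)}$. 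Since $q-p<1+(\text{something})$, $L^{q-p}\le L$ for $L\ge1$ when $q-p\le1$, and the remaining power of $|\overline a|$ is nonnegative; so after using \eqref{3-2-1} to bound $\mathcal{H}$ by $b_0$ the right side is $\ge c\,L|\overline a|^{\alpha-\sigma}-Cb_0 L\ge \tfrac12 c\,L$ once $b_0$ is small, contradicting the left side for $L$ large. I expect the main obstacle to be exactly this bookkeeping of exponents of $|\overline a|$ and $L$: one must choose $\alpha$ small enough that the nonlocal gain $|\overline a|^{\alpha-\sigma}$ (recall $\alpha-\sigma<0$, so this is \emph{large} for small $|\overline a|$) genuinely dominates both the error terms and the Hamiltonian term uniformly as $\sigma\to2$, which forces the quantitative constraint linking $\alpha$, $q-p$ and $\sigma$, and it is here that $q-p\le 1$ (rather than a larger gap) is used in an essential way, as well as where one must verify the constant $C$ does not blow up as $\sigma\to 2$ by keeping the factor $(2-\sigma)$ from $\int_{\mathcal{D}}|z|^{2-N-\sigma}\,dz$ canceled against the $(\delta_0|\overline a|)^{2-\sigma}$ factor.
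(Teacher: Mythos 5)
Your overall strategy is the paper's own: double variables with the H\"older profile $\varphi(t)=t^\alpha$, invoke Lemma \ref{lem2-1}(2) with fixed cone aperture $\eta_0=\delta_0=\varepsilon$ to get $I_K[\mathcal{D}](\varphi,\overline{a})\le -\widetilde{C}|\overline{a}|^{\alpha-\sigma}$, and absorb the Hamiltonian contribution $\mathcal{H}L^{q-p}|\overline{a}|^{(\alpha-1)(q-p)}$ into this good term using $q-p\le 1$, $L^{q-p}\le L$ and the smallness of $b_0$; your comparison of powers of $|\overline{a}|$ is exactly the paper's identity $(\alpha-1)(q-p)+\sigma-\alpha=(\alpha-1)(q-p-1)+\sigma-1>0$. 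The genuine gap is the asserted lower bound $|Dh(\overline{x})+\xi|,\,|D\eta(\overline{y})+\xi|\ge c\max\{L|\overline{a}|^{\alpha-1},|\xi|\}$. As stated this is false: nothing prevents $\xi$ from being (nearly) antiparallel to the test gradient with comparable magnitude, e.g.\ $\xi=-D\widetilde{h}(\overline{x})$, in which case the degenerate factor $|D\widetilde{h}(\overline{x})+\xi|^p$ can vanish and the division that produces the $\mathcal{M}/|\cdot|^p$ and $f/|\cdot|^p$ terms (and keeps them bounded by $\mathcal{M}$ and $\|f\|_{L^\infty}$) is not justified. Controlling this interaction is the whole point of the $\xi$-perturbed statement, and it is precisely where \eqref{3-2-1} must be used a second time: the paper sets $\theta=(b_0/\mathcal{H})^{\frac{1}{q-p}}$, so that \eqref{3-2-1} gives $\theta\ge1$ and $|\xi|\le\theta$, and then requires $L\ge 4\alpha^{-1}\theta$, which yields $1<\theta|\overline{a}|^{\alpha-1}\le |D\widetilde{h}(\overline{x})+\xi|,\,|D\widetilde{\eta}(\overline{y})+\xi|\le 3\alpha L|\overline{a}|^{\alpha-1}$ as in \eqref{3-2-5}. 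Without this device (or an equivalent one coupling the size of $L$ to the bound on $|\xi|$ furnished by \eqref{3-2-1}), your chain of viscosity inequalities does not get started.

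Two smaller slips, neither fatal: the "remaining power of $|\overline{a}|$" in your absorption step is not nonnegative, since $\alpha-\sigma-(\alpha-1)(q-p)=-\left[(\alpha-1)(q-p-1)+\sigma-1\right]<0$; but because $|\overline{a}|<1$ this negative exponent works in your favor ($|\overline{a}|$ raised to it is $\ge1$), so requiring $\mathcal{H}L^{q-p}\le \frac{1}{2}C_0\alpha(1-\alpha)L$, hence $\mathcal{H}\le b_0$ small with $q-p\le1$, indeed suffices, exactly as in the paper's final step with $\alpha\le(1/3)^{\frac{1}{q-p}}$ and $b_0\le\widetilde{C}/2$. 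Likewise your remark that $(\alpha-1)(q-p)\ge\alpha-1\ge\alpha-\sigma$ "fails in general" is incorrect: it holds (it is the inequality you then use), and your concluding remark on uniformity as $\sigma\to2$ should attribute the cancellation of the $\frac{1}{2-\sigma}$ factor from $\int_{\mathcal{D}}|z|^{2-N-\sigma}\,dz$ to the normalizing constant $C_{N,\sigma}$, not to $(\delta_0|\overline{a}|)^{2-\sigma}$. Fixing the lower bound on $|D\widetilde{h}(\overline{x})+\xi|$ via the paper's $\theta$-argument makes your proof complete and essentially identical to the paper's.
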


\begin{proof}
Let
$$
\theta=\left(\frac{b_0}{\mathcal{H}}\right)^\frac{1}{q-p}.
$$
In view of \eqref{3-2-1}, we have
\begin{equation}
\label{3-2-2}
\theta\ge1 \quad\text{and}\quad |\xi|\le \theta.
\end{equation}
The proof is similar to that of Lemma \ref{lem3-1}. We substitute $\omega(t)$ in \eqref{3-1-1} in Lemma \ref{lem3-1} with
$\varphi(t)=t^\alpha$,  $t\in[0,\infty)$ for $\alpha\in(0,1)$ to be fixed. By doubling variables we introduce two functions
\begin{equation*}
\begin{cases}
\widetilde{\phi}(x,y)=L\varphi(|x-y|)+\psi(y),\\
\Psi(x,y)=u(x)-u(y)-\widetilde{\phi}(x,y).
\end{cases}
\end{equation*}
Let $\Psi(\overline{x},\overline{y})=\sup_{B_1\times B_1}\Psi(x,y)$ with $(\overline{x},\overline{y})\in \overline{B}_1\times\overline{B}_1$. We show that$\Psi(\overline{x},\overline{y})\le0$
in order to conclude the local H\"{o}lder continuity.
For a contradiction, assume that $\Psi(\overline{x},\overline{y})>0$.
Then
$$
L|\overline{x}-\overline{y}|^\alpha+m\left[\left(|\overline{y}|^2-\frac{1}{4}\right)_+\right]^2\le2\|u\|_{L^\infty(B_1)}.
$$
We may choose
$$
m>2\left(\frac{16}{5}\right)^2\|u\|_{L^\infty(B_1)}
\quad\text{and}\quad
L>16\|u\|_{L^\infty(B_1)}
$$
such that $|\overline{y}|<\frac{3}{4}$, $|\overline{x}-\overline{y}|<\frac{1}{8}$.
It follows that $(\overline{x},\overline{y})\in B_\frac{7}{8}\times B_\frac{7}{8}\subset B_1\times B_1$ and $\overline{x}\neq\overline{y}$, as well as
\begin{equation}
\label{3-2-3}
|\overline{x}-\overline{y}|\le\left(\frac{2\|u\|_{L^\infty(B_1)}}{L}\right)^\frac{1}{\alpha}.
\end{equation}

As in the proof of Lemma \ref{lem3-1}, we obtain the viscosity inequalities
\begin{equation}
\label{3-2-4}
\begin{cases}
-|D\widetilde{h}(\overline{x})+\xi|^p\mathcal{I}_\sigma(\widetilde{v},\overline{x})+H(\overline{x},D\widetilde{h}(\overline{x})+\xi)\le f(\overline{x}),\\
-|D\widetilde{\eta}(\overline{y})+\xi|^p\mathcal{I}_\sigma(\widetilde{w},\overline{y})+H(\overline{y},D\widetilde{\eta}(\overline{y})+\xi)\ge f(\overline{y}),
\end{cases}
\end{equation}
where
\begin{equation*}
\begin{cases}
\widetilde{h}(x)=\widetilde{\phi}(x,\overline{y})-\widetilde{\phi}(\overline{x},\overline{y})+u(\overline{x}),\\
\widetilde{\eta}(y)=-\widetilde{\phi}(\overline{x},y)+\widetilde{\phi}(\overline{x},\overline{y})+u(\overline{y}),
\end{cases}
\end{equation*}
along with
\begin{equation*}
\widetilde{v}(z)=\begin{cases}\widetilde{h}(z),\quad z\in B_\delta(\overline{x}),\\
u(z)\quad\text{ otherwise},
\end{cases}
\end{equation*}
and
\begin{equation*}
\widetilde{w}(z)=\begin{cases}\widetilde{\eta}(z),\quad z\in B_\delta(\overline{y}),\\
u(z)\quad\text{otherwise},
\end{cases}
\end{equation*}
for $\delta\in(0,\frac18)$. A direct calculation shows that
\begin{equation*}
\begin{cases}
D\widetilde{h}(\overline{x})=\alpha L|\overline{x}-\overline{y}|^{\alpha-2}(\overline{x}-\overline{y}),\\
D\widetilde{\eta}(\overline{y})=-\alpha L|\overline{x}-\overline{y}|^{\alpha-2}(\overline{x}-\overline{y})+4m\left(|\overline{y}|^2-\frac{1}{4}\right)_+\overline{y},
\end{cases}
\end{equation*}
and further
\begin{equation*}
\begin{cases}
|D\widetilde{h}(\overline{x})|=\alpha L|\overline{x}-\overline{y}|^{\alpha-1}, \\
\frac{\alpha L|\overline{x}-\overline{y}|^{\alpha-1}}{2}\le|D\widetilde{\eta}(\overline{y})|\le2\alpha L|\overline{x}-\overline{y}|^{\alpha-1}
\end{cases}
\end{equation*}
for $L\ge\frac{15m}{16\alpha}$. Since
$$
4m\left(|\overline{y}|^2-\frac{1}{4}\right)_+|\overline{y}|\le 4m\frac{5}{16}\cdot\frac{3}{4},$$
we may take $L$ large enough that
$$
\frac{15m}{16}\le\alpha L<\alpha L|\overline{x}-\overline{y}|^{\alpha-1}.
$$
Moreover, by taking $L\ge4\alpha^{-1}\theta$, we obtain
$$
|D\widetilde{h}(\overline{x})+\xi|\le\alpha L|\overline{x}-\overline{y}|^{\alpha-1}+\theta\le 2\alpha L|\overline{x}-\overline{y}|^{\alpha-1}
$$
and
$$
|D\widetilde{h}(\overline{x})+\xi|\ge\alpha L|\overline{x}-\overline{y}|^{\alpha-1}-\theta\ge4\theta|\overline{x}-\overline{y}|^{\alpha-1}-\theta\ge3\theta|\overline{x}-\overline{y}|^{\alpha-1},
$$
where \eqref{3-2-2} and $|\overline{x}-\overline{y}|^{\alpha-1}>1$ were utilized. Analogously, we can treat $|D\widetilde{\eta}(\overline{y})+\xi|$. In summary, we have
\begin{equation}
\label{3-2-5}
1<\theta|\overline{x}-\overline{y}|^{\alpha-1}\le|D\widetilde{h}(\overline{x})+\xi|,|D\widetilde{\eta}(\overline{y})+\xi|\le3\alpha L|\overline{x}-\overline{y}|^{\alpha-1}.
\end{equation}
Merging \eqref{3-2-4}, \eqref{3-2-5} with the condition \eqref{2-1} on $H(\cdot)$ with $p<q\le p+1$, we arrive at
\begin{equation}
\label{3-2-6}
\begin{split}
&\mathcal{I}_\sigma(\widetilde{v},\overline{x})-\mathcal{I}_\sigma(\widetilde{w},\overline{y}) \\
&\qquad\ge-2\|f\|_{L^\infty(B_1)}-\left(\frac{\mathcal{M}+\mathcal{H}|D\widetilde{h}(\overline{x})+\xi|^q}{|D\widetilde{h}(\overline{x})+\xi|^p}+
\frac{\mathcal{M}+\mathcal{H}|D\widetilde{\eta}(\overline{y})+\xi|^q}{|D\widetilde{\eta}(\overline{y})+\xi|^p}\right) \\
&\qquad\ge-2(\mathcal{M}+\|f\|_{L^\infty(B_1)})-\mathcal{H}\left(|D\widetilde{h}(\overline{x})+\xi|^{q-p}+|D\widetilde{\eta}(\overline{y})+\xi|^{q-p}\right) \\
&\qquad\ge-2(\mathcal{M}+\|f\|_{L^\infty(B_1)})-\mathcal{H}(3\alpha L)^{q-p}|\overline{x}-\overline{y}|^{(\alpha-1)(q-p)}.
\end{split}
\end{equation}

The estimate on the term $\mathcal{I}_\sigma(\widetilde{v},\overline{x})-\mathcal{I}_\sigma(\widetilde{w},\overline{y})$ in the previous display is similar to Lemma \ref{lem3-1}
and we obtain
\begin{equation}
\label{3-2-7}
\begin{split}
&-C(\|f\|_{L^\infty(B_1)}+\|u\|_{L^\infty(B_1)}+\|u\|_{L^1_\sigma}+\mathcal{M})\\
&\qquad-\mathcal{H}(3\alpha L)^{q-p}|\overline{x}-\overline{y}|^{(\alpha-1)(q-p)}\le LI_K[\mathcal{D}](\varphi,\overline{a}),
\end{split}
\end{equation}
corresponding to \eqref{3-1-6} as $\delta\rightarrow0$.
Let $l(z)=D_x\varphi(|\overline{x}-\overline{y}|)\cdot z$. There holds
$$
I_K[\mathcal{D}](\varphi,\overline{a})=-\int_{\mathcal{D}}\delta^2\varphi(\cdot,\overline{y})(\overline{x},z)K(z)\,dz
$$
with $\varphi(x,y)=\varphi(|x-y|)$. By virtue of Lemma \ref{lem2-1} (2) and
$$
C_{N,\sigma}\frac{\lambda}{|y|^{N+\sigma}}\le K(y)\le C_{N,\sigma}\frac{\Lambda}{|y|^{N+\sigma}},
$$
we have
\begin{equation}
\label{3-2-8}
\begin{split}
I_K[\mathcal{D}](\varphi,\overline{a})&\le-\frac{\alpha(1-\alpha)2^\alpha C_{N,\sigma}\lambda}{32}|\overline{a}|^{\alpha-2}\int_{\mathcal{D}}
|z|^{2-N-\sigma}\,dz \\
&\le-C|\overline{a}|^{\alpha-2}\eta_0^{\frac{N-1}{2}}(\delta_0|\overline{a}|)^{2-\sigma}
=-\widetilde{C}|\overline{a}|^{\alpha-\sigma},
\end{split}
\end{equation}
where in the last line we have selected $\eta_0=\delta_0=\varepsilon$ with $\varepsilon\in(0,\frac{1}{2})$ a number coming from Lemma \ref{lem2-1} (2), and the constant $\widetilde{C}>0$ is uniformly bounded as $\sigma\rightarrow2$. It follows from \eqref{3-2-7}, \eqref{3-2-8} that
\[
\begin{split}
&C(\|f\|_{L^\infty(B_1)}+\|u\|_{L^\infty(B_1)}+\|u\|_{L^1_\sigma}+\mathcal{M})\\
&\qquad+\mathcal{H}(3\alpha L)^{q-p}|\overline{x}-\overline{y}|^{(\alpha-1)(q-p)}\ge\widetilde{C}L|\overline{x}-\overline{y}|^{\alpha-\sigma}
\end{split}
\]
and further by $0<\alpha<1<\sigma$
\begin{equation}
\label{3-2-9}
\begin{split}
\widetilde{C}L&\le C(\|f\|_{L^\infty(B_1)}+\|u\|_{L^\infty(B_1)}+\|u\|_{L^1_\sigma}+\mathcal{M})\\
&\qquad+\mathcal{H}(3\alpha L)^{q-p}|\overline{x}-\overline{y}|^{(\alpha-1)(q-p)+\sigma-\alpha}
\end{split}
\end{equation}
Observe that, since $q\le p+1$ and $\alpha<1$, we have
$$
(\alpha-1)(q-p)+\sigma-\alpha=(\alpha-1)(q-p-1)+\sigma-1>0.
$$
Therefore, recalling \eqref{3-2-1} and $q-p\le1$, we obtain
$$
\widetilde{C}L\le C(\|f\|_{L^\infty(B_1)}+\|u\|_{L^\infty(B_1)}+\|u\|_{L^1_\sigma}+\mathcal{M})+3b_0\alpha ^{q-p}L,
$$
i.e.,
$$
(\widetilde{C}-3b_0\alpha ^{q-p})L\le C(\|f\|_{L^\infty(B_1)}+\|u\|_{L^\infty(B_1)}+\|u\|_{L^1_\sigma}+\mathcal{M}).
$$
Let
$$
\alpha\le\left(\frac{1}{3}\right)^\frac{1}{q-p} \quad\text{and}\quad  0<b_0\le\frac{\widetilde{C}}{2}.
$$
Then
$$
\frac{\widetilde{C}}{2}L\le C(\|f\|_{L^\infty(B_1)}+\|u\|_{L^\infty(B_1)}+\|u\|_{L^1_\sigma}+\mathcal{M}).
$$
with $C,\widetilde{C}>0$ being two universal constants independent of $\mathcal{M},\mathcal{H}$. Taking sufficiently large $L$ leads to a contradiction from the preceding inequality.
\end{proof}

In the end of this section, we are ready to consider the case $p+1<q\le p+\sigma$. Unfortunately, based on the technology applied in the present paper, we can not verify the $C^{1,\alpha}$-regularity of viscosity solution, so we are unnecessary to examine the gradient perturbation equation \eqref{main2}. Then we simply study the Lipschitz continuity of solutions to Eq. \eqref{main}, which is of independent interest.

\smallskip

We first clarify the derivation of the upper constraint condition $q\le p+\sigma$ as follows. We follow the proof of Lemma \ref{lem3-2} verbatim, except taking $\xi=0$ this time, and arrive \eqref{3-2-9}. In order to derive a contradiction from \eqref{3-2-9} by choosing large enough $L$, we need require that the exponent $(\alpha-1)(q-p)+\sigma-\alpha$ is positive, which together with \eqref{3-2-3} may make the exponent of $L$ on the right-hand side of \eqref{3-2-9} not larger than 1. That is, we demand
$$
1\ge q-p-\frac{(\alpha-1)(q-p)+\sigma-\alpha}{\alpha},
$$
or equivalently $p+\sigma\ge q$.
It is worth mentioning that if $\sigma\rightarrow2$, the condition above coincides with the condition imposed by \cite{BDL19,BD16} for local fully nonlinear equation with a Hamiltonian term.
On the contrary, if the condition $q\le p+\sigma$ is violated, we cannot conclude the H\"{o}lder continuity by the classical Ishii--Lions method, since we cannot find an appropriate $L$ for the contradiction.

The forthcoming lemma states the H\"{o}lder regularity for the equation \eqref{main}, whose proof is very similar to that of Lemma \ref{lem3-2}.

\begin{lemma}
\label{lem3-3}
Let $\sigma\in(1,2)$ and assume that the conditions $(A_1)$--$(A_3)$ hold with $p+1< q\le p+\sigma$. Let $u\in C(\overline{B}_1)$ be a viscosity solution to \eqref{main}. Then $u$ is locally H\"{o}lder continuous in $B_1$. More precisely, there exist universal constants $\alpha\in(0,1)$ and $C\ge1$ such that
$$
|u(x)-u(y)|\le C|x-y|^\alpha
$$
for every $x,y\in B_{\frac12}$, where the constant $C$ is uniformly bounded as $\sigma\rightarrow2$.
\end{lemma}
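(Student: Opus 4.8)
The plan is to follow the doubling-of-variables scheme of Lemma~\ref{lem3-2} almost verbatim, the only structural changes being that we now take $\xi=0$ throughout (so that we deal directly with \eqref{main}) and that the absence of the smallness hypothesis \eqref{3-2-1} is compensated by a more careful choice of the H\"older exponent $\alpha\in(0,1)$. Arguing by contradiction, we would assume that $\Psi(x,y)=u(x)-u(y)-L\varphi(|x-y|)-\psi(y)$, with $\varphi(t)=t^\alpha$ and $\psi$ as in \eqref{3-1-1}, attains a positive maximum at some $(\overline{x},\overline{y})\in\overline{B}_1\times\overline{B}_1$. Exactly as in Lemma~\ref{lem3-2}, choosing $m$ and $L\ge1$ large localizes $(\overline{x},\overline{y})\in B_{7/8}\times B_{7/8}$ with $\overline{x}\ne\overline{y}$ and yields \eqref{3-2-3}, i.e. $|\overline{a}|\le(2\|u\|_{L^\infty(B_1)}/L)^{1/\alpha}$ with $\overline{a}=\overline{x}-\overline{y}$ and $|\overline{a}|<\tfrac18$. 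Since $\xi=0$, the test-function gradients satisfy $|D\widetilde{h}(\overline{x})|=\alpha L|\overline{a}|^{\alpha-1}$ and $\tfrac12\alpha L|\overline{a}|^{\alpha-1}\le|D\widetilde{\eta}(\overline{y})|\le2\alpha L|\overline{a}|^{\alpha-1}$ for $L$ large; because $\alpha<1$ and $|\overline{a}|<1$, both are bounded below by $\alpha L>1$, so there is no gradient degeneracy and dividing by $|D\widetilde{h}(\overline{x})|^p$ or $|D\widetilde{\eta}(\overline{y})|^p$ in the viscosity inequalities is harmless.

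Next, plugging the two viscosity inequalities for the sup/inf test functions $\widetilde{v},\widetilde{w}$ and using the growth bound \eqref{2-1} with $p+1<q\le p+\sigma$, we would arrive, as at \eqref{3-2-6}, at
$$
\mathcal{I}_\sigma(\widetilde{v},\overline{x})-\mathcal{I}_\sigma(\widetilde{w},\overline{y})\ge-2\bigl(\mathcal{M}+\|f\|_{L^\infty(B_1)}\bigr)-\mathcal{H}(3\alpha L)^{q-p}|\overline{a}|^{(\alpha-1)(q-p)}.
$$
For the left-hand side we run the nonlocal estimate from the proofs of Lemma~\ref{lem3-1} and Lemma~\ref{lem3-2}: there is a suitable kernel $K$ for which, after discarding the part of $B_{1/8}\setminus B_\delta$ outside the cone $\mathcal{D}$ (controlled by the smoothness of $\psi$) and letting $\delta\to0$, the estimate reduces to $LI_K[\mathcal{D}](\varphi,\overline{a})$, and Lemma~\ref{lem2-1}(2) with the kernel bounds gives $I_K[\mathcal{D}](\varphi,\overline{a})\le-\widetilde{C}|\overline{a}|^{\alpha-\sigma}$, where $\widetilde{C}>0$ is uniformly bounded away from $0$ as $\sigma\to2$. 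Combining, we obtain the analogue of \eqref{3-2-9},
$$
\widetilde{C}L\le C\bigl(\|f\|_{L^\infty(B_1)}+\|u\|_{L^\infty(B_1)}+\|u\|_{L^1_\sigma}+\mathcal{M}\bigr)+\mathcal{H}(3\alpha L)^{q-p}|\overline{a}|^{(\alpha-1)(q-p)+\sigma-\alpha}.
$$

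It then remains to choose $\alpha\in(0,1)$ so as to force a contradiction. Writing $\mu:=(\alpha-1)(q-p)+\sigma-\alpha=(\alpha-1)(q-p-1)+(\sigma-1)$ and noting that $0<q-p-1\le\sigma-1$, we get $\mu>0$ for every $\alpha\in(0,1)$. Inserting \eqref{3-2-3} and $|\overline{a}|<1$ into the last display,
$$
\mathcal{H}(3\alpha L)^{q-p}|\overline{a}|^{\mu}\le\mathcal{H}(3\alpha)^{q-p}\bigl(2\|u\|_{L^\infty(B_1)}\bigr)^{\mu/\alpha}L^{\,q-p-\mu/\alpha},\qquad q-p-\frac{\mu}{\alpha}=1+\frac{q-p-\sigma}{\alpha}.
$$
If $q<p+\sigma$, this power of $L$ is strictly below $1$, so for a fixed $\alpha$ (say $\alpha=\tfrac12$) and $L$ large enough the right-hand side is $<\tfrac{\widetilde{C}}{2}L$, which contradicts $\widetilde{C}L\le C(\cdots)+(\text{that term})$. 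If $q=p+\sigma$, the power of $L$ equals $1$; here $\mu/\alpha=\sigma-1$ is fixed while $(3\alpha)^{q-p}=(3\alpha)^{\sigma}\to0$ as $\alpha\to0$, so, since $\widetilde{C}$ decays only mildly in $\alpha$, one can choose $\alpha$ small enough (depending only on the structure data) that the $L$-independent coefficient $\mathcal{H}(3\alpha)^{q-p}(2\|u\|_{L^\infty(B_1)})^{\mu/\alpha}<\widetilde{C}$; then the term is absorbed into $\widetilde{C}L$ and $L$ is forced to remain bounded --- a contradiction for $L$ large. In either case $\Psi\le0$ on $B_1\times B_1$, which yields the asserted H\"older estimate on $B_{1/2}$ with a universal constant, uniformly bounded as $\sigma\to2$ since $\widetilde{C}$ stays bounded away from $0$.

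The remaining ingredients --- the localization of the doubling point, the nonlocal cone estimate, and the application of Lemma~\ref{lem2-1}(2) --- are identical to Lemma~\ref{lem3-2}, so the only genuinely new point, and the main obstacle I anticipate, is the bookkeeping in the last step: one must pick $\alpha\in(0,1)$ making simultaneously the exponent $\mu$ of $|\overline{a}|$ positive (so the smallness of $|\overline{a}|$ from \eqref{3-2-3} converts into a negative power of $L$) and the resulting power $q-p-\mu/\alpha=1+(q-p-\sigma)/\alpha$ of $L$ no larger than $1$ --- which is exactly where the hypothesis $q\le p+\sigma$ is consumed, as indicated before the statement. The borderline case $q=p+\sigma$, where this power equals $1$, is the delicate one; it is resolved by checking that the $L$-independent coefficient in front of $L$ is small compared to $\widetilde{C}$ for $\alpha$ small, which plays the role of the smallness assumption on $\mathcal{H}$ used in Lemma~\ref{lem3-2}.
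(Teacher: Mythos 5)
Your proposal is correct and follows essentially the same route as the paper: run the Lemma~\ref{lem3-2} argument with $\xi=0$, reach \eqref{3-2-9}, and use \eqref{3-2-3} to convert the Hamiltonian term into a power $1+\frac{q-p-\sigma}{\alpha}\le 1$ of $L$ (you compute this exponent exactly, the paper bounds it by $1+q-p-\sigma$), treating $q<p+\sigma$ by largeness of $L$ alone and the borderline case $q=p+\sigma$ by shrinking $\alpha$. Your remark that in the borderline case one must check $\widetilde{C}\sim c\,\alpha$ decays more slowly than $(3\alpha)^{q-p}=(3\alpha)^{\sigma}$ (since $\sigma>1$) is a valid and slightly more careful point than the paper's, which simply asserts that a small universal $\alpha$ suffices.
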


\begin{proof}
We use the same notation as in the proof of Lemma \ref{lem3-2}.
As in the proof of Lemma \ref{lem3-2} with $\xi=0$, we arrive at \eqref{3-2-9} and rewrite it as
\begin{align*}
\widetilde{C}L
&\le C(\|f\|_{L^\infty(B_1)}+\|u\|_{L^\infty(B_1)}+\|u\|_{L^1_\sigma}+\mathcal{M})\\
&\qquad+\mathcal{H}(3\alpha L)^{q-p}|\overline{x}-\overline{y}|^{(\alpha-1)(q-p)+\sigma-\alpha}.
\end{align*}
For $p+1< q\le p+\sigma$, we have
\begin{align*}
q-p-\frac{(\alpha-1)(q-p)+\sigma-\alpha}{\alpha}&=q-p-\frac{(\alpha-1)(q-p-1)+\sigma-1}{\alpha}\\
&\le q-p-\frac{(\alpha-1)(\sigma-1)+\sigma-1}{\alpha}\\
&=q-p+1-\sigma\in(2-\sigma,1].
\end{align*}
Then by \eqref{3-2-3} and $p+1< q\le p+\sigma$, we obtain
\begin{align*}
\widetilde{C}L&\le C(\|f\|_{L^\infty(B_1)}+\|u\|_{L^\infty(B_1)}+\|u\|_{L^1_\sigma}+\mathcal{M})\\
&\qquad+\mathcal{H}(3\alpha L)^{q-p}\left(\frac{2\|u\|_{L^\infty(B_1)}}{L}\right)^\frac{(\alpha-1)(q-p)+\sigma-\alpha}{\alpha}\\
&\le C(\|f\|_{L^\infty(B_1)}+\|u\|_{L^\infty(B_1)}+\|u\|_{L^1_\sigma}+\mathcal{M})\\
&\qquad+(2\|u\|_{L^\infty(B_1)})^\gamma\mathcal{H}(3\alpha )^{q-p}L^{1+q-p-\sigma},
\end{align*}
with $\gamma=q-p-1+\frac{p+\sigma-q}{\alpha}$.
It follows that
\begin{equation}
\label{3-3-1}
\begin{split}
&(\widetilde{C}-(2\|u\|_{L^\infty(B_1)})^\gamma\mathcal{H}(3\alpha )^{q-p}L^{q-p-\sigma})L\\
&\qquad\le C(\|f\|_{L^\infty(B_1)}+\|u\|_{L^\infty(B_1)}+\|u\|_{L^1_\sigma}+\mathcal{M}).
\end{split}
\end{equation}
If $q\in(p+1,p+\sigma)$, we may select $L$ sufficiently large to obtain
$$
\widetilde{C}-(2\|u\|_{L^\infty(B_1)})^\gamma\mathcal{H}(3\alpha )^{q-p}L^{q-p-\sigma}\ge\frac{\widetilde{C}}{2}.
$$
By choosing $L$ large enough concludes a contradiction with \eqref{3-3-1}.
If $q=p+\sigma$, the coefficient of $L$ becomes
$$
\widetilde{C}-(2\|u\|_{L^\infty(B_1)})^{q-p+1}\mathcal{H}(3\alpha )^{q-p},
$$
and then we may take a universal $\alpha\in(0,1)$ small enough that
$$
\frac{\widetilde{C}}{2}\le\widetilde{C}-(2\|u\|_{L^\infty(B_1)})^{q-p+1}\mathcal{H}(3\alpha )^{q-p}.
$$
Finally, by choosing $L$ large enough we may arrive at a contradiction.
\end{proof}

Based on the H\"{o}lder continuity of solutions above, Lemma \ref{lem3-2}, we could further justify the Lipschitz regularity (Theorem \ref{thm2}) for Eq. \eqref{main}, whose proof is stated as follows.

\begin{proof}[\textbf{Proof of Theorem \ref{thm2}}]
Lemma \ref{lem3-3} implies that $u$ is locally $\alpha$-H\"{o}lder continuous in $B_1$.
 For the sake of convenience, we assume $u\in C^{0,\alpha}(\overline{B}_1)$ with $\alpha<1$.
The argument is analogous to that of Lemma \ref{lem3-1}, with the difference that the H\"{o}lder continuity of $u$ is applied instead of the boundedness of $u$.

Let us use the same notation as  in the proof of Lemma \ref{lem3-1}. Corresponding to the display \eqref{3-1-2}, by H\"{o}lder continuity of $u$, we have
\begin{align*}
0&<\Psi(\overline{x},\overline{y})\le u(\overline{x})-u(\overline{y})-L\omega(|\overline{x}-\overline{y}|)-\psi(\overline{y}) \nonumber\\
&\le[u]_\alpha|\overline{x}-\overline{y}|^\alpha-L\omega(|\overline{x}-\overline{y}|)-m\left[\left(|\overline{y}|^2-\frac{1}{4}\right)_+\right]^2.
\end{align*}
By taking
$$
m>\frac{512}{25}[u]_\alpha
\quad\text{and}\quad
L>16\biggl(1-\frac{1}{4}\left(\frac{1}{8}\right)^\beta\biggr)^{-1}[u]_\alpha,
$$
we conclude that $\overline{x}\neq\overline{y}$, $|\overline{x}-\overline{y}|<\frac{1}{8}$ and $(\overline{x},\overline{y})\in B_\frac{7}{8}\times B_\frac{7}{8}\subset B_1\times B_1$, together with
\begin{equation}
\label{3-4-1}
|\overline{x}-\overline{y}|^{1-\alpha}\le\frac{2[u]_\alpha}{L}.
\end{equation}

Following the argument in Case 1 of the proof of Lemma \ref{lem3-1}, except letting $\xi=0$ this time, the inequality \eqref{3-1-4} becomes
$$
\mathcal{I}_\sigma(v,\overline{x})-\mathcal{I}_\sigma(w,\overline{y})\ge -C(p)(\mathcal{M}+\|f\|_{L^\infty(B_1)})-C(q)\mathcal{H}L^{q-p}
$$
and further \eqref{3-1-6} (as $\delta\rightarrow0$) turns into
\begin{align*}
&-C(\|f\|_{L^\infty(B_1)}+\|u\|_{L^\infty(B_1)}+\|u\|_{L^1_\sigma}+\mathcal{M})-C\mathcal{H}L^{q-p}\\
&\qquad\le 2LI_K[\mathcal{D}](\omega,\overline{a})
\le -CL|\overline{a}|^{\beta-1+\frac{N-1}{2}\beta+(\beta+1)(2-\sigma)}.
\end{align*}
Note that $|\overline{a}|<\frac{1}{8}$ and $\iota=\beta-1+\frac{N-1}{2}\beta+(\beta+1)(2-\sigma)<0$. Thus, we have
 \begin{equation}
\label{3-4-2}
\begin{split}
CL&\le C(\|f\|_{L^\infty(B_1)}+\|u\|_{L^\infty(B_1)}+\|u\|_{L^1_\sigma}+\mathcal{M})+C\mathcal{H}L^{q-p}|\overline{x}-\overline{y}|^{-\iota} \\
&\le C(\|f\|_{L^\infty(B_1)}+\|u\|_{L^\infty(B_1)}+\|u\|_{L^1_\sigma}+\mathcal{M})+C\mathcal{H}(2[u]_\alpha)^\frac{-\iota}{1-\alpha}
L^{q-p+\frac{\iota}{1-\alpha}},
\end{split}
\end{equation}
where we applied \eqref{3-4-1}. We observe that
\begin{align*}
q-p+\frac{\iota}{1-\alpha}&\le \sigma+\frac{1-\sigma+\beta\left(3+\frac{N-1}{2}-\sigma\right)}{1-\alpha}\\
&=1+\frac{\beta\left(3+\frac{N-1}{2}-\sigma\right)-\alpha(\sigma-1)}{1-\alpha}.
\end{align*}
To get a contradiction from \eqref{3-4-2}, we require that $q-p+\frac{\iota}{1-\alpha}<1$, or equivalently,
$$
\beta\left(3+\frac{N-1}{2}-\sigma\right)<\alpha(\sigma-1).
$$
Thus we may select $0<\beta<\frac{\alpha(\sigma-1)}{3+\frac{N-1}{2}-\sigma}$.
Once we make this choice, we may choose $L>0$ sufficiently large in the inequality \eqref{3-4-2} to get a contradiction.
This proves the Lipschitz continuity.
\end{proof}

\begin{remark}
In the scenario $p<q\le p+1$, we could also deduce the Lipschitz regularity for the solutions to the gradient perturbation equation \eqref{main2} by means of the bootstrap argument as the proof of Theorem \ref{thm2}. However, the H\"{o}lder continuity of solutions to \eqref{main2} (Lemma \ref{lem3-2}) suffices to provide the compactness results needed in Lemma \ref{lem4-1} below.
\end{remark}

\section{Gradient H\"{o}lder regularity of solutions}
\label{sec4}

In this section, we are going to demonstrate the gradient H\"{o}lder continuity of viscosity solutions to \eqref{main} by a perturbation argument. The core of this idea is to find a solution with known $C^{1,\alpha}$-regularity to approximate the solution of \eqref{main}, so that this property can be transferred to the solution under consideration. Therefore, through the flatness improvement argument, we shall first show the following approximation lemma that implies the nonlocal equation \eqref{main2} can be uniformly close to a (local) uniformly elliptic equation when $\sigma$ approaches to two.

\begin{lemma}
\label{lem4-1}
Let the conditions $(A_1)$--$(A_4)$ with $0\le q\le p+1$ hold true. Let $u\in C(\overline{B}_1)$ be a normalized viscosity solution to \eqref{main2}.
Given $\mu,\varepsilon>0$, there exists $\kappa>0$ depending on $N$, $\lambda$, $\Lambda$, $\varepsilon$, $p$ and $\mu$ 
such that if
$$
|\sigma-2|+\|f\|_{L^\infty(B_1)}+\mathcal{M}+\mathcal{H}\bigl(1+|\xi|^{(q-p)_+}\bigr)\leq \kappa
$$
and
$$
|u(x)|\le\mu(1+|x|^{1+\overline{\alpha}}), \quad  x\in\mathbb{R}^N,
$$
with some $\overline{\alpha}\in(0,1)$, then there exists an $F$-harmonic function $h\in C^{1,\overline{\alpha}}_{\rm loc}(B_1)$ (i.e., $h$ is a solution to $F(D^2h)=0$ in the viscosity sense) satisfying
$$
\|u-h\|_{L^\infty(B_{\frac12})}\leq \varepsilon.
$$
\end{lemma}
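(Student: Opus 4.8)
The plan is to argue by contradiction via a compactness/stability argument, which is the standard route for such approximation lemmas (cf. \cite{CS11, PT21, APS24}). Suppose the claim fails. Then there exist $\mu, \varepsilon_0 > 0$, a sequence of exponents $\sigma_k \to 2$, sequences of right-hand sides $f_k$, Hamiltonians $H_k$ with constants $\mathcal{M}_k, \mathcal{H}_k$, vectors $\xi_k \in \mathbb{R}^N$, uniformly elliptic nonlocal operators $\mathcal{I}_{\sigma_k}$ with kernels satisfying $(A_4)$, and normalized viscosity solutions $u_k$ to the corresponding perturbed equation \eqref{main2}, such that
$$
|\sigma_k - 2| + \|f_k\|_{L^\infty(B_1)} + \mathcal{M}_k + \mathcal{H}_k\bigl(1 + |\xi_k|^{(q-p)_+}\bigr) \le \tfrac1k,
\qquad |u_k(x)| \le \mu(1+|x|^{1+\overline\alpha}),
$$
but $\|u_k - h\|_{L^\infty(B_{1/2})} > \varepsilon_0$ for every $F$-harmonic $h \in C^{1,\overline\alpha}_{\rm loc}(B_1)$.

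The first key step is to extract a locally uniformly convergent subsequence. For this I would invoke the Hölder-type estimates already available: if $0 \le q \le p$, Lemma \ref{lem3-1} gives a uniform Lipschitz bound on $B_{3/4}$ (independent of $\xi_k$ and stable as $\sigma_k \to 2$); if $p < q \le p+1$, the smallness condition $\mathcal{H}_k(1+|\xi_k|^{q-p}) \le 1/k \le b_0$ puts us in the regime of Lemma \ref{lem3-2}, yielding a uniform interior $C^{0,\alpha}$ bound. Combined with the growth control $|u_k| \le \mu(1+|x|^{1+\overline\alpha})$, which gives uniform bounds on $\|u_k\|_{L^1_{\sigma_k}(\mathbb{R}^N)}$ (uniform in $k$ since $\sigma_k \to 2$), we have equicontinuity on compact subsets of $B_1$, so by Arzelà--Ascoli a subsequence converges locally uniformly to some $u_\infty \in C(B_1)$, which still satisfies the same growth bound. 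I would also need that $\xi_k$ can be controlled: either $(q-p)_+ = 0$ and $\xi_k$ is unrestricted but enters only through $|Du+\xi_k|^p$ with $p \ge 0$ — so one must track the argument as in Lemma \ref{lem3-1} splitting into $|\xi_k|$ large versus small — or $(q-p)_+ > 0$ and then $\mathcal{H}_k |\xi_k|^{q-p} \le 1/k$ forces $\mathcal{H}_k |\xi_k|^{q-p} \to 0$.

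The second key step is to identify the limit equation. Since the kernels of $\mathcal{I}_{\sigma_k}$ satisfy $(A_4)$, i.e., $|K_{ij}|x|^{N+\sigma_k} - k_{ij}| \le \omega(|x|)$ with $k_{ij} \in [\lambda, \Lambda]$, the operators $\mathcal{I}_{\sigma_k}$ converge (after passing to a further subsequence in the indices if needed) to a second-order operator $F$ that is uniformly $(\lambda,\Lambda)$-elliptic — this is exactly the content of the remark after $(A_4)$ and is the nonlocal-to-local stability result of Caffarelli--Silvestre type. Meanwhile $f_k \to 0$, $\mathcal{M}_k \to 0$, $\mathcal{H}_k(1+|\xi_k|^{(q-p)_+}) \to 0$, so the Hamiltonian terms and the source vanish in the limit. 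Since $\Phi(x, Du+\xi_k) = |Du+\xi_k|^p$ and the equation can be divided by it on the region where the gradient is non-degenerate (or handled via the standard viscosity-solution argument at points of degeneracy, using that $p \ge 0$), the stability of viscosity solutions under locally uniform convergence — which for nonlocal equations requires care with the tails, handled via the uniform $L^1_{\sigma_k}$ bound and the growth assumption — yields that $u_\infty$ is a viscosity solution of $F(D^2 u_\infty) = 0$ in $B_1$. By the interior $C^{1,\overline\alpha}$ regularity theory for $F$-harmonic functions (Caffarelli--Krylov--Safonov--Evans), $u_\infty \in C^{1,\overline\alpha}_{\rm loc}(B_1)$, so taking $h = u_\infty$ we get $\|u_k - h\|_{L^\infty(B_{1/2})} \to 0$, contradicting $\|u_k - h\|_{L^\infty(B_{1/2})} > \varepsilon_0$ for this choice of $h$.

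The main obstacle is the stability (passage to the limit) step, specifically the interaction of three delicate points: (i) handling the possibly-degenerate coefficient $|Du+\xi_k|^p$ in the viscosity framework — one must verify that at a test point where $D\varphi + \xi_k$ does not vanish one may divide, and that the degenerate case contributes nothing, exactly as in \cite{PT21, APS24}; (ii) the tail control in passing from $\mathcal{I}_{\sigma_k}$ to $F$, i.e., showing that for a fixed smooth test function touching $u_\infty$ from above at $x_0$, the nonlocal evaluations $\mathcal{I}_{\sigma_k}(\varphi_\delta, x_0)$ converge to $F(D^2\varphi(x_0))$ — this uses that $\sigma_k \to 2$ together with $(A_4)$ on small scales and the $L^1_{\sigma_k}$ bound on large scales, plus the locally uniform convergence $u_k \to u_\infty$; and (iii) making sure the vector $\xi_k$, which may be unbounded when $(q-p)_+ = 0$, does not obstruct the limit — here the point is that in the limit equation $\xi_k$ disappears because after dividing by $|Du+\xi_k|^p$ the source and Hamiltonian terms are $o(1)$ uniformly, so the precise behavior of $\xi_k$ is irrelevant to the limiting PDE. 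Once these are dispatched, the rest is the routine compactness-contradiction bookkeeping.
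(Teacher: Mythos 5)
Your proposal follows essentially the same route as the paper's proof: a compactness--contradiction argument that uses Lemma \ref{lem3-1} and Lemma \ref{lem3-2} (the latter activated by the smallness of $\mathcal{H}_k\bigl(1+|\xi_k|^{(q-p)_+}\bigr)$) for local equicontinuity, the growth bound for uniform $L^1_{\sigma_k}$ control, hypothesis $(A_4)$ together with $\sigma_k\rightarrow2$ for convergence of $\mathcal{I}_{\sigma_k}$ to a $(\lambda,\Lambda)$-elliptic operator $F$, and then viscosity stability to show the limit is $F$-harmonic and hence $C^{1,\overline{\alpha}}_{\rm loc}$, contradicting the assumed $\varepsilon_0$-separation. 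The one step you delegate to \cite{PT21,APS24} --- test points where the effective gradient $\eta+\xi_k$ degenerates in the limit --- is precisely where the paper's proof does its real work (assuming $F(A)>0$ and perturbing the quadratic test function by $\gamma|P_T(x)|$ along the positive eigenspace of $A$, treating bounded and unbounded $(\xi_k)$ and the cases $P_T(x^\gamma_j)=0$ or not, so as to keep the gradient factor bounded away from zero before dividing), so the phrase ``the degenerate case contributes nothing'' understates that step, but the cited technique does extend to the present setting as you indicate.
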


\begin{proof}
For a contradiction, assume that there exist $\mu_0,\varepsilon_0>0$ and sequences of $(\sigma_j)$, $(f_j)$, $(u_j)$, $(H_j)$, $(\xi_j)$ such that
\begin{itemize}
  \item[(i)]
  $u_j\in C(B_1)$ with $\|u_j\|_{L^\infty(B_1)}\leq 1$ and $|u_j(x)|\le\mu_0(1+|x|^{1+\overline{\alpha}})$ is a solution to
      \begin{equation}
      \label{4-1-1}
      -|Du_j+\xi_j|^p\mathcal{I}_{\sigma_j}(u_j,x)+H_j(x,Du_j+\xi_j)=f_j(x)  \quad \text{in } B_1,
      \end{equation}
      where the operator $\mathcal{I}_{\sigma_j}$ satisfies $(A_4)$;
  \item[(ii)] the Hamiltonian term fulfills
  \begin{equation}\label{4-1-2}
  |H_j(x,\xi)|\le \mathcal{M}_j+\mathcal{H}_j|\xi|^{q-p};
  \end{equation}
  \item[(iii)] and
  \begin{equation}
  \label{4-1-3}
|\sigma_j-2|+\|f_j\|_{L^\infty(B_1)}+\mathcal{M}_j+\mathcal{H}_j\bigl(1+|\xi_j|^{(q-p)_+}\bigr)\leq \frac{1}{j}.
\end{equation}

\end{itemize}
However, we have
$$
\|u_k-h\|_{L^\infty(B_{\frac12})}>\varepsilon_0
$$
for any $h(x)\in C^{1,\overline{\alpha}}_{\rm loc}(B_1)$. With the help of $\sigma_j\rightarrow2$ and the condition ($A_4$), we have $\mathcal{I}_{\sigma_j}\rightarrow F$, where $F$ is a uniformly $(\lambda,\Lambda)$-elliptic operator. Moreover, based on $\mathcal{H}_j\left(1+|\xi_j|^{(q-p)_+}\right)\leq \frac{1}{j}$, Lemma \ref{lem3-1} and Lemma \ref{lem3-2}, we can find a continuous function $v$ such that $u_j\rightarrow v$ locally uniformly in $B_1$ by compactness. In particular, we have
$v\in C(B_{\frac34})$, $\|v\|_{L^\infty(B_{\frac34})}\leq1$ and
\begin{equation}
\label{4-1-4}
\|v-h\|_{L^\infty(B_{\frac12}}>\varepsilon_0.
\end{equation}

Next, we are ready to verify that $v$ is a viscosity solution to
\begin{equation}
\label{4-1-5}
-F(D^2u)=0   \quad \text{in }  B_{\frac34}.
\end{equation}
We shall only prove that $v$ is a viscosity supersolution, since the argument for $v$ is a subsolution is similar.
Let $\varphi$ be a smooth function such that $v-\varphi$ has a local minimum in $B_1$ at $\tilde{x}$.
Without loss of generality we may assume that $|\tilde{x}|=v(0)=\varphi(0)=0$ and that $\varphi$ is a quadratic polynomial, i.e.,
$$
\varphi(x)=\frac{1}{2}Ax\cdot x+\eta\cdot x,
$$
where $\eta\in\mathbb{R}^N$ and $A$ is a symmetric matrix and the dot means the standard inner product.
Since $u_j\rightarrow v$ locally uniformly in $B_1$, there exists a point $x_k$ in a small neighborhood of the origin and a quadratic polynomial
$$
\varphi_j(x)=\frac{1}{2}A(x-x_j)\cdot(x-x_j)+\eta\cdot(x-x_j)+u_j(x_j)
$$
touching $u_j$ from below at $x_j$. By $u_j$ a viscosity solution to \eqref{4-1-1}, we have
\begin{equation}
\label{4-1-6}
-|\eta+\xi_j|^p\mathcal{I}_{\sigma_j}(u_j,x)+H_j(x,\eta+\xi_j)\ge f_j(x_j).
\end{equation}

In the scenario that $\{\xi_j\}$ is unbounded, we assume $|\xi_j|\rightarrow\infty$ up to a subsequence. Using \eqref{4-1-2}, \eqref{4-1-3}, we obtain
\begin{align*}
\frac{|H_j(x_j,\eta+\xi_j)|}{|\eta+\xi_j|^p}
&\le |\xi_j|^{q-p}\frac{\mathcal{M}_j|\xi_j|^{-q}+\mathcal{H}_j|\hat{\xi}_j+|\xi_j|^{-1}\eta|^q}{|\hat{\xi}_j+|\xi_j|^{-1}\eta|^p}\\
&\le 2^{q+p}\frac{|\xi_j|^{q-p}}{j(|\xi_j|^{(q-p)_+}+1)}\rightarrow0
\end{align*}
and
$$
\frac{f_j(x_j)}{|\eta+\xi_j|^p}\rightarrow0
$$
as $j\rightarrow\infty$. Here $\hat{x}=\frac{x}{|x|}$. Hence passing to the limit in \eqref{4-1-6}, we get
$$
-F(A)=-\lim_{k\rightarrow\infty}\mathcal{I}^\delta_{\sigma_j}(u_j,\varphi_j,x_j)\ge 0.
$$

If $(xi_j)$ is a bounded sequence, we may assume that $\xi_j\rightarrow\overline{\xi}$ up to a subsequence.
We first consider the case $|\eta+\overline{\xi}|>0$. Applying \eqref{4-1-2}, \eqref{4-1-3},
we conclude that
\begin{equation*}
\frac{|f_j(x_j)|}{|\eta+\xi_j|^p}\le\frac{2^p}{j|\eta+\overline{\xi}|^p}\rightarrow0
\end{equation*}
and
$$
\frac{|H_j(x_j,\eta+\xi_j)|}{|\eta+\xi_j|^p}\le \frac{\mathcal{M}_j+\mathcal{H}_j|\xi_j+\eta|^q}{|\xi_j+\eta|^p}\le \frac{1+|\xi_j+\eta|^q}{j|\eta+\xi_j|^p(|\xi_j|^{(q-p)_+}+1)}\rightarrow0
$$
as $j\rightarrow\infty$. At this moment, passing to the limit in \eqref{4-1-6}, we also obtain
$$
-F(A)=-\lim_{k\rightarrow\infty}\mathcal{I}^\delta_{\sigma_j}(u_j,\varphi_j,x_j)\ge 0.
$$

In the sequel, we focus on the situation $|\eta+\overline{\xi}|=0$, for which we show that $F(A)\le0$.
For a contradiction, assume that
\begin{equation}
\label{4-1-7}
F(A)>0.
\end{equation}
From this and the uniform ellipticity of $F(\cdot)$, it follows that there is at least one positive eigenvalue related to the matrix $A$.
Consider the orthogonal sum $\mathbb{R}^n=T\oplus Q$ with $T$ 
denoting the invariant space consisting of the eigenvectors corresponding to positive eigenvalues.

\medskip

\textbf{Case 1.} $b=-\overline{\eta}\neq0$. Let $\gamma>0$ and
$$
\varphi_\gamma(x)=\varphi(x)+\gamma|P_T(x)|=\frac{1}{2}Ax\cdot x+\eta\cdot x+\gamma|P_T(x)|,
$$
where $P_T$ stands for the orthogonal projection over $T$.
Since $u_j\rightarrow v$ locally uniformly and $\varphi$ touches $v$ at 0 from below, it follows that $\varphi_\gamma$ touches $u_j$ from below at some $x^\gamma_j$ in a neighbourhood of 0 for $\gamma$ small enough, and moreover $x^\gamma_j\rightarrow\overline{x}$ for some $\overline{x}\in B_{\frac34}$ as $j\rightarrow\infty$ (up to a subsequence).

If $P_T(x^\gamma_j)=0$, we observe that
$$
\phi_\gamma(x)=\frac{1}{2}Ax\cdot x+\eta\cdot x+\gamma e\cdot P_T(x)
$$
touches $u_j$ from below at $x^\gamma_j$ for every $e\in \mathbb{S}^{N-1}$ (i.e., $|e|=1$). This implies a viscosity inequality
\begin{equation}
\label{4-1-8}
\begin{split}
&-|\eta+Ax^\gamma_j+\gamma P_T(e)+\xi_j|^p\mathcal{I}^\delta_{\sigma_j}(u_j,\phi_\gamma,x^\gamma_j)\\
&\qquad+H_j(x_j^\gamma,\eta+Ax^\gamma_j+\gamma P_T(e)+\xi_j)\ge f_j(x^\gamma_j),
\end{split}
\end{equation}
where $D(e\cdot P_T(x))=P_T(e)$. If $A\overline{x}=0$, then we can see $|\eta+\xi_j+Ax^\gamma_j|\rightarrow0$, so we get
\begin{equation*}
2\gamma>|\eta+Ax^\gamma_j+\gamma P_T(e)+\xi_j|>\frac{\gamma}{2}
\end{equation*}
for large enough $j$, through selecting $e\in T\cap \mathbb{S}^{N-1}$ such that $P_T(e)=e$.
On the other hand, if $A\overline{x}\neq0$ and $T\equiv \mathbb{R}^N$, we choose $e\in \mathbb{S}^{N-1}$ fulfilling
\begin{align*}
2|A\overline{x}+\gamma e|
&>|\eta+Ax^\gamma_j+\gamma P_T(e)+\xi_j|\\
&\ge\frac{1}{2}|A\overline{x}+\gamma e|-\frac{1}{4}|A\overline{x}+\gamma e|
=\frac{1}{4}|A\overline{x}+\gamma e|>0,
\end{align*}
where we applied $|\eta+\xi_j|\rightarrow0$.
Besides, if $A\overline{x}\neq0$ and $T\neq\mathbb{R}^N$, we choose $e\in Q\cap \mathbb{S}^{N-1}$ such that
$$
2|A\overline{x}|>|\eta+Ax^\gamma_j+\gamma P_T(e)+\xi_j|\ge\frac{1}{2}|A\overline{x}|-\frac{1}{4}|A\overline{x}|=\frac{1}{4}|A\overline{x}|>0,
$$
where we applied the facts that $P_T(e)=0$ and $|\eta+\xi_j|\rightarrow0$. Thereby, employing \eqref{4-1-2}, \eqref{4-1-3} we have
\begin{align*}
\frac{|H_j(x^\gamma_j,\eta+Ax^\gamma_j+\gamma P_T(e)+\xi_j)|}{|\eta+Ax^\gamma_j+\gamma P_T(e)+\xi_j|^p}&\le \frac{\mathcal{M}_j+\mathcal{H}_j|Ax^\gamma_j+\gamma P_T(e)+\xi_j+\eta|^q}{a^p}\\
&\le \frac{1+b^q}{a^p}(\mathcal{M}_j+\mathcal{H}_j)\le\frac{1+b^q}{a^p}\frac{1}{j(|\xi_j|^{(q-p)_+}+1)}\rightarrow0
\end{align*}
and
$$
\frac{|f_j(x^\gamma_j)|}{|\eta+Ax^\gamma_j+\gamma P_T(e)+\xi_j|^p}\le\frac{1}{ja^p}\rightarrow0
$$
as $j\rightarrow\infty$, where $a,b\in\{(\gamma/2,2\gamma),(|A\overline{x}+\gamma e|/4,2|A\overline{x}+\gamma e|),(|A\overline{x}|/4,2|A\overline{x}|)\}$. As a consequence, by taking the limit in \eqref{4-1-8}, we conclude that $-F(A)\ge0$, contradicting \eqref{4-1-7}.

Finally, we address the case when $P_T(x^\gamma_j)\neq0$. We observe that the function $x\rightarrow|P_T(x)|$ is convex and smooth near $x^\gamma_j$. Moreover, we have
\begin{equation}
\label{4-1-9}
|P_T(x)|D(|P_T(x)|)=P_T(x) \ \text{ and } \ D^2(|P_T(x)|) \ \text{is nonnegative definite}.
\end{equation}
Hence we obtain the following viscosity inequality
\begin{align*}
-|\eta+Ax^\gamma_j+\gamma e^\gamma_j+\xi_j|^p\mathcal{I}^\delta_{\sigma_j}(u_j,\varphi_\gamma,x^\gamma_j)+H_j(x^\gamma_j,\eta+Ax^\gamma_j+\gamma e^\gamma_j+\xi_j)\ge f_j(x^\gamma_j)
\end{align*}
with
$$
e^\gamma_j=\frac{P_T(x^\gamma_j)}{|P_T(x^\gamma_j)|}.
$$
As in the case when $P_T(x^\gamma_j)=0$, we may considere the scenarios that $A\overline{x}=0$ and $A\overline{x}\neq0$, respectively, and obtain $-F(A+D^2|P_T(\overline{x})|)\ge0$.
By \eqref{4-1-9} and the ellipticity on $F$, we have $F(A)\le0$. This is a contradiction with \eqref{4-1-7}.

\medskip

\textbf{Case 2.} $p=\eta=0$. For this, the procedures is easier. Since $\frac{1}{2}Ax\cdot x$ touches $\overline{u}$ from below at 0 and $u_j\rightarrow\overline{u}$ locally uniformly, it follows that
$$
\hat{\varphi}_\gamma(x)=\frac{1}{2}Ax\cdot x+\gamma|P_T(x)|
$$
touches $u_j$ from below at some $\hat{x}_j$ in a small neighborhood of 0 for $j$ large enough.
As in  Case 1, we examine the cases $|P_T(\hat{x}_j)|=0$ and $|P_T(\hat{x}_j)|>0$ separately, and obtain estimates for
$$|\eta_j+A\hat{x}_j+e|
\quad\text{and}\quad
|\eta_j+A\hat{x}_j+\gamma \hat{e}|,
\quad \hat{e}=\frac{P_T(\hat{x}_j)}{|P_T(\hat{x}_j)|},\ (P_T(\hat{x}_j)\neq0).
$$
We conclude that $F(A)\leq0$, which contradicts \eqref{4-1-7}.

At this point, we have demonstrated that $v$ is a viscosity supersolution to \eqref{4-1-5} and similarly we may verify that it is a subsolution. It is well known in \cite[Chapter 5]{CC95} that a solution of \eqref{4-1-5} is $C^{1,\overline{\alpha}}_{\rm loc}$-regular for some $\overline{\alpha}\in (0,1)$. Hence we choose $h=v$ and obtain a contradiction with \eqref{4-1-4}. This completes the proof.
\end{proof}

Next, we would like to check the error on the solutions to \eqref{main} and a linear function, which will implies the $C^{1,\alpha}$ properties of solutions by the well-known theory.

\begin{lemma}
\label{lem4-2}
Let the hypotheses $(A_1)$--$(A_4)$ be fulfilled with $0\le q\le p+1$. Suppose that $u$ is a normalized viscosity solution to \eqref{main}. Given $\mu>0$, there exists $\nu>0$,
depending on $N$, $\lambda$, $\Lambda$, $p$, $q$, $\alpha$ and $\mu$,  such that if
$$
|\sigma-2|+\|f\|_{L^\infty(B_1)}+\mathcal{M}+\mathcal{H}\leq \nu
$$
and
$$
|u(x)|\le\mu(1+|x|^{1+\overline{\alpha}}), \quad x\in\mathbb{R}^N
$$
with $\overline{\alpha}$ from Lemma \ref{lem4-1}, then there  exists $\rho\in\left(0,\frac{1}{2}\right)$, depending on $N$, $\lambda$, $\Lambda$ and $\alpha$, and a sequence $(l_j)$ of affine functions $l_j(x)=a_j+b_j\cdot x$ for which
\begin{equation}
\label{4-2-0}
\|u-l_j\|_{L^\infty(B_{\rho^j})}\leq \rho^{j(1+\alpha)}
\end{equation}
with
\begin{equation}
\label{4-2-0-1}
|a_{j+1}-a_j|+\rho^j|b_{j+1}-b_j|\leq C\rho^{j(1+\alpha)}
\end{equation}
for any
$\alpha\in(0,\overline{\alpha})\cap \bigl(0,\tfrac{\sigma-1}{1+p}\bigr]$.
Here the constant $C\ge1$ depends only on $N$, $\lambda$ and $\Lambda$.
\end{lemma}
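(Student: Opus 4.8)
\smallskip

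The plan is to run a discrete geometric iteration in the spirit of Caffarelli--Silvestre, using Lemma \ref{lem4-1} as the compactness input at every scale. I argue by induction on $j$, constructing the affine functions $l_j(x)=a_j+b_j\cdot x$ and carrying along the auxiliary tail bound
\[
\|u-l_j\|_{L^\infty(B_r)}\le C_2\,r^{1+\alpha}\qquad\text{for all }\rho^j\le r\le1,
\]
which is an immediate telescoped consequence of \eqref{4-2-0}--\eqref{4-2-0-1} and, in particular, yields bounds $|a_j|\le C_1$ and $|b_j|\le C_0$ that are uniform in $j$ (with $C_0=C/(1-\rho^\alpha)$, etc.). The base case $j=0$ is $l_0\equiv0$, for which \eqref{4-2-0} reduces to the normalization $\|u\|_{L^\infty(B_1)}\le1$.

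For the inductive step, assume $l_0,\dots,l_j$ are found and set $w:=u-l_j$, $v(x):=\rho^{-j(1+\alpha)}w(\rho^j x)$, so that $\|v\|_{L^\infty(B_1)}\le1$ by \eqref{4-2-0}. A change of variables --- using that each $I_K$ annihilates affine functions and that kernels rescale inside $\mathcal{K}$ with unchanged order $\sigma$ and unchanged ellipticity constants --- shows that $v$ is a viscosity solution in $B_1$ of the perturbed equation \eqref{main2}
\[
-|Dv+\xi_j|^p\,\widehat{\mathcal{I}}_\sigma(v,x)+\widetilde H_j(x,Dv+\xi_j)=\widetilde f_j(x),
\]
with $\xi_j:=\rho^{-j\alpha}b_j$, $\widetilde f_j(x):=\rho^{j\theta_0}f(\rho^j x)$, $\widetilde H_j(x,\zeta):=\rho^{j\theta_0}H(\rho^j x,\rho^{j\alpha}\zeta)$, $\theta_0:=\sigma-1-\alpha(1+p)$, and $\widehat{\mathcal{I}}_\sigma$ the operator built from the rescaled kernels, which still satisfies $(A_4)$ (with a $j$-dependent modulus that is harmless, since the threshold $\kappa$ in Lemma \ref{lem4-1} does not depend on it). The hypothesis $\alpha\le\frac{\sigma-1}{1+p}$ gives $\theta_0\ge0$; combined with $0\le q\le p+1$ (which forces both $\theta_0+\alpha q\ge0$ and $\theta_0+\alpha p\ge0$) and the uniform bound $|b_j|\le C_0$, a short computation gives
\[
\|\widetilde f_j\|_{L^\infty(B_1)}+\widetilde{\mathcal M}_j+\widetilde{\mathcal H}_j\bigl(1+|\xi_j|^{(q-p)_+}\bigr)\le\bigl(1+C_0^{(q-p)_+}\bigr)\bigl(\|f\|_{L^\infty(B_1)}+\mathcal M+\mathcal H\bigr),
\]
where $\widetilde{\mathcal M}_j=\rho^{j\theta_0}\mathcal M$ and $\widetilde{\mathcal H}_j=\rho^{j(\theta_0+\alpha q)}\mathcal H$. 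This is exactly where the exponent $\frac{\sigma-1}{1+p}$ and the ceiling $q\le p+1$ are consumed: they make all powers of $\rho$ arising after rescaling nonnegative even though $|\xi_j|=\rho^{-j\alpha}|b_j|$ is permitted to blow up.

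Next I check the global growth hypothesis of Lemma \ref{lem4-1}: for $1\le|x|\le\rho^{-j}$ the auxiliary tail bound gives $|v(x)|\le C_2|x|^{1+\alpha}\le C_2|x|^{1+\overline{\alpha}}$ (using $\alpha<\overline{\alpha}$), while for $|x|\ge\rho^{-j}$ one estimates $|v(x)|$ through $|u(\rho^j x)|\le\mu(1+|\rho^j x|^{1+\overline{\alpha}})$ and $|l_j(\rho^j x)|\le C_1+C_0\rho^j|x|$, so in all cases $|v(x)|\le\mu_0(1+|x|^{1+\overline{\alpha}})$ with $\mu_0=\mu_0(\mu,N,\lambda,\Lambda)$ independent of $j$. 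Choosing $\nu$ so small that the displayed quantity stays below $\kappa$ (applied with $\mu_0$ and with an $\varepsilon$ to be fixed), Lemma \ref{lem4-1} produces an $F$-harmonic $h\in C^{1,\overline{\alpha}}_{\mathrm{loc}}(B_1)$ with $\|v-h\|_{L^\infty(B_{1/2})}\le\varepsilon$. Since $\|h\|_{L^\infty(B_{1/2})}\le2$, the interior $C^{1,\overline{\alpha}}$ estimate for $F$-harmonic functions gives a universal $C_h$ with $|h(0)|+|Dh(0)|\le C_h$ and $|h(x)-h(0)-Dh(0)\cdot x|\le C_h|x|^{1+\overline{\alpha}}$ for $|x|\le\tfrac12$, whence $|v(x)-h(0)-Dh(0)\cdot x|\le\varepsilon+C_h\rho^{1+\overline{\alpha}}$ for $|x|\le\rho$. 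One then first fixes $\rho\in(0,\tfrac12)$ (depending on $N,\lambda,\Lambda,\alpha$) small enough that $C_h\rho^{1+\overline{\alpha}}\le\tfrac12\rho^{1+\alpha}$, which is possible because $\alpha<\overline{\alpha}$; this also pins down $C:=2C_h$ in \eqref{4-2-0-1} and hence $C_0,C_1,C_2,\mu_0$; one then fixes $\varepsilon:=\tfrac12\rho^{1+\alpha}$, which pins down $\kappa$ and thereby $\nu$ (depending on $N,\lambda,\Lambda,p,q,\alpha,\mu$) --- so the choices are not circular. Scaling back via $l_{j+1}(x):=l_j(x)+\rho^{j(1+\alpha)}\bigl(h(0)+\rho^{-j}Dh(0)\cdot x\bigr)$ gives $\|u-l_{j+1}\|_{L^\infty(B_{\rho^{j+1}})}\le\rho^{(j+1)(1+\alpha)}$ and $|a_{j+1}-a_j|+\rho^j|b_{j+1}-b_j|\le2C_h\rho^{j(1+\alpha)}$, closing the induction.

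The main obstacle is the bookkeeping of the rescaled data rather than any single estimate. Two features make it delicate: the slopes $b_j$ stay bounded but the perturbation vectors $\xi_j=\rho^{-j\alpha}b_j$ are \emph{forced} to grow, so one genuinely needs the $1+|\xi|^{(q-p)_+}$ form of the smallness hypothesis in Lemma \ref{lem4-1} together with the exact cancellation of powers of $\rho$ --- this is what simultaneously selects the range $\alpha\le\frac{\sigma-1}{1+p}$ and the ceiling $q\le p+1$; and the global growth bound for $v$ at every scale is not automatic, relying on the telescoped estimate $\|u-l_j\|_{L^\infty(B_r)}\le C_2 r^{1+\alpha}$ being carried through the induction together with $\alpha<\overline{\alpha}$. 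The remaining ingredients --- the viscosity-sense rescaling, the stability of $\mathcal{K}$ and of $(A_4)$ under the dilation, and the interior $C^{1,\overline{\alpha}}$ estimate for $F$-harmonic functions --- are standard.
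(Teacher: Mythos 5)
Your proposal is correct and follows essentially the same route as the paper: rescale $u_j(x)=\rho^{-j(1+\alpha)}(u-l_j)(\rho^j x)$ with $\xi_j=\rho^{-j\alpha}b_j$, use $\alpha\le\frac{\sigma-1}{1+p}$ and the bound $|b_j|\le C_0$ so that the rescaled data satisfy the smallness hypothesis $\|\widetilde f_j\|+\widetilde{\mathcal M}_j+\widetilde{\mathcal H}_j\bigl(1+|\xi_j|^{(q-p)_+}\bigr)\le\kappa$, invoke Lemma \ref{lem4-1} together with interior $C^{1,\overline{\alpha}}$ estimates for the $F$-harmonic approximant, and fix $\rho$, then $\varepsilon=\tfrac12\rho^{1+\alpha}$, then $\nu$, exactly as in the paper. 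The only (harmless) deviation is that you verify the growth hypothesis $|u_j(x)|\le\mu_0(1+|x|^{1+\overline{\alpha}})$ by telescoping \eqref{4-2-0}--\eqref{4-2-0-1} into a tail bound plus the original growth of $u$, whereas the paper runs a separate scale-by-scale induction using the $F$-harmonic function; both are standard and yield the same conclusion.
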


\begin{proof}
\textbf{Step 1.} We first find an affine function $l$ and a number $\rho\in\left(0,\frac{1}{2}\right)$ satisfying
\begin{equation}
\label{4-2-1}
 \sup_{x\in B_\rho}|u(x)-l(x)|\le\rho^{1+\alpha}.
\end{equation}
By Lemma \ref{lem4-1}, there exists an $F$-harmonic function $h\in C^{1,\overline{\alpha}}(B_{\frac34})$ such that
$$
\|u-h\|_{L^\infty(B_{\frac34})}\le\varepsilon
$$
with $\varepsilon>0$ to be selected later. The existence of such an $F$-harmonic function is ensured by Lemma \ref{lem4-1}, provided $\nu>0$ is small enough.

From the regularity theory in \cite{CC95}, it follows that
$$
\sup_{x\in B_\rho}|h(x)-(h(0)+Dh(0)\cdot x)|\leq C\rho^{1+\overline{\alpha}},
\quad \rho\in\left(0,\tfrac34\right),
$$
with $|h(0)|+|Dh(0)|\leq C$.
Here the constants $C$ and $\overline{\alpha}\in(0,1)$ depend only on $N,\lambda,\Lambda$. Set
$$
l(x)=a_1+b_1\cdot x=h(0)+Dh(0)\cdot x.
$$
Then
\begin{align*}
\sup_{x\in B_\rho}|u(x)-l(x)|\leq \sup_{x\in B_\rho}|u(x)-h(x)|+\sup_{x\in B_\rho}|h(x)-l(x)|<\varepsilon+C\rho^{1+\overline{\alpha}}.
\end{align*}
Since $0<\alpha<\overline{\alpha}$, we take $0<\rho\ll1$ such that
$$
\rho\leq(2C)^{-\frac{1}{\overline{\alpha}-\alpha}} \quad\text{and}\quad  \varepsilon=\frac{1}{2}\rho^{1+\alpha}
$$
to obtain \eqref{4-2-1}. Once we fix the value of $\varepsilon$ here, the quantity $\kappa$ in  Lemma \ref{lem4-1} is determined accordingly.
Let  $\nu>0$ be small enough that
\begin{equation*}
\nu\biggl(\left(C+\frac{C}{1-\rho^\alpha}\right)^{q-p}+1\biggr)\le \frac{\kappa}{4}
\end{equation*}
with the $\kappa$ coming from Lemma \ref{lem4-1}. 
This leads to the claim.

\medskip

\textbf{Step 2}. Proceed by induction. For $k=1$, the claim has been proved in Step 1. Suppose that the claim is true for $k=1,2,\dots,j$.
We will show the claim for $k=j+1$. Let
$$
u_j(x)=\frac{u(\rho^j x)-l_j(\rho^j x)}{\rho^{j(1+\alpha)}}.
$$
Then
\begin{equation}
\label{4-2-2}
-|Du_j+\xi_j|^p\overline{\mathcal{I}}(u_j,x)+\overline{H}(x,Du_j+\xi_j)=\overline{f}(x)  \quad\text{in }  B_1,
\end{equation}
where the nonlocal operator $\overline{\mathcal{I}}$ carries the same uniform ellipticity condition as the operator $\mathcal{I}$ in \eqref{main} (more detains on this could be seen in \cite{CS11}).
Here
$$
\xi_j=\rho^{-j\alpha}b_j, \quad \overline{f}(x)=\rho^{j(\sigma-1-\alpha(1+p))}f(\rho^jx)
$$
and
$$
\overline{H}(x,Du_j+\xi_j)=\rho^{j(\sigma-1-\alpha(1+p))}H(\rho^jx,\rho^{j\alpha}(Du_j+\xi_j)).
$$
By $\alpha\le\frac{\sigma-1}{p+1}$ and \eqref{4-2-0} together with the growth condition \eqref{2-1} on $H(\cdot)$, we obtain
$$
\|\overline{f}\|_{L^\infty(B_1)}\le \rho^{j(\sigma-1-\alpha(1+p))}\|f\|_{L^\infty(B_1)}\le\nu
$$
and
\begin{align*}
|\overline{H}(x,Du_j+\xi_j)|&\le\rho^{j(\sigma-1-\alpha(1+p))}\left(\mathcal{M}+\mathcal{H}\rho^{jq\alpha}|Du_j+\xi_j|^q\right)\\
&=\mathcal{M}_j+\mathcal{H}_j|Du_j+\xi_j|^q
\end{align*}
with
$$
\mathcal{M}_j=\rho^{j(\sigma-1-\alpha(1+p))}\mathcal{M} \quad\text{and}\quad \mathcal{H}_j=\rho^{j(\sigma-1-\alpha(1+p-q))}\mathcal{H}.
$$
By $\alpha\le\frac{\sigma-1}{p+1}$ and \eqref{4-2-0} again, we obtain
$$
\mathcal{M}_j\le \mathcal{M}\le \nu \quad\text{and}\quad  \mathcal{H}_j\le\mathcal{H}\le \nu.
$$

Next we analyze the quantity $\overline{H}_j(|\xi_j|^{(q-p)_+}+1)$. For $q\in[0,p]$, we have
$$
\overline{H}_j(|\xi_j|^{(q-p)_+}+1)\le2\mathcal{H}\le2\nu.
$$
Through the induction hypothesis, we have $|b_{j+1}-b_j|\le C\rho^{j\alpha}$, so that we obtain
$$
|b_j|\le |b_1|+\sum_{k=1}^{j-1}|b_{k+1}-b_k|\le C+C\sum_{k=1}^{j-1}\rho^{k\alpha}\le C+\frac{C}{1-\rho^\alpha}.
$$
Thus, for $q\in(p,p+1]$, it follows that
\begin{align*}
\overline{H}_j(|\xi_j|^{q-p}+1)&=\rho^{j(\sigma-1-\alpha(1+p-q))}\mathcal{H}\bigl(\rho^{-j\alpha(q-p)}|b_j|^{q-p}+1\bigr)\\
&\le\rho^{j(\sigma-1-\alpha}\mathcal{H}\left(|b_j|^{q-p}+1\right)\\
&\le\nu\biggl(\left(C+\frac{C}{1-\rho^\alpha}\right)^{q-p}+1\biggr).
\end{align*}
In summary, for $q\in[0,p+1]$, we have
$$
\overline{H}_j(|\xi_j|^{(q-p)_+}+1)\le\nu\biggl(\left(C+\frac{C}{1-\rho^\alpha}\right)^{q-p}+1\biggr).
$$
At this moment,  equation \eqref{4-2-2} satisfies the smallness conditions in Lemma \ref{lem4-1}. It remains to justify that
\begin{equation}
\label{4-2-3}
|u_j(x)|\le1+|x|^{1+\overline{\alpha}} \quad\text{for } x\in\mathbb{R}^N.
\end{equation}
If  \eqref{4-2-3} holds, we may apply the conclusion from Step 1 to deduce that
$$
\sup_{B_\rho}|u_j(x)-\tilde{l}(x)|\le \rho^{1+\alpha},
$$
where $\tilde{l}$ is an affine function of the form
$$
\tilde{l}(x)=\tilde{a}+\tilde{b}\cdot x=\tilde{h}(0)+D\tilde{h}(0)\cdot x
$$
with $|\tilde{a}|+|\tilde{b}|\le C(N,\lambda,\Lambda)$. Scaling back, we infer that
$$
\sup_{B_{\rho^{j+1}}}|u(x)-l_{j+1}(x)|\le \rho^{(j+1)(1+\alpha)},
$$
where
$$
l_{j+1}(x)=a_{j+1}+b_{j+1}\cdot x=(a_j+\rho^{j(1+\alpha)}\tilde{a})+(b_j+\rho^{j\alpha}\tilde{b}\cdot x).
$$
Then
$$
|a_{j+1}-a_j|\le C\rho^{j(1+\alpha)} \quad \text{and}\quad |b_{j+1}-b_j|\le C\rho^{j\alpha},
$$
as desired.

Finally, we verify \eqref{4-2-3} via induction to completes the proof.
The argument is analogous to that in the proof of \cite[Lemma 3.3]{FRZ25}, but we give the details for the sake of completeness.
For $k=0$, take $u_0=u$. Suppose that \eqref{4-2-3} holds for $k=0,1,2,\dots,j$. Next, we proceed to prove this for $k=j+1$. Notice that
$$
u_{j+1}(x)=\rho^{-(1+\alpha)}\left(\frac{u(\rho^j(\rho x))-l_j(\rho^j(\rho x))}{\rho^{j(1+\alpha)}}-\tilde{l}(\rho x)\right)
=\frac{u_j(\rho x)-\tilde{l}(\rho x)}{\rho^{1+\alpha}}.
$$
If $\rho|x|>\frac{1}{2}$, we choose
$$
\rho\le \biggl(\frac{1}{10(1+C)}\biggr)^\frac{1}{\overline{\alpha}-\alpha}
$$
to have
\begin{align*}
|u_{j+1}(x)|&\le \rho^{-(1+\alpha)}\bigl((1+|\rho x|^{1+\overline{\alpha}})+C(1+|\rho x|)\bigr)\\
&\le\rho^{\overline{\alpha}-\alpha}(5+6C)|x|^{1+\overline{\alpha}}
\le|x|^{1+\overline{\alpha}}.
\end{align*}
On the other hand, when $\rho|x|\le\frac{1}{2}$, it holds that
\begin{align*}
|u_{j+1}(x)|&\le \rho^{-(1+\alpha)}(|u_j(\rho x)-h(\rho x)|+|h(\rho x)-\tilde{l}(\rho x)|)\\
&\le\rho^{-(1+\alpha)}\left(\frac{\rho^{1+\alpha}}{2}+C\rho^{1+\overline{\alpha}}|x|^{1+\overline{\alpha}}\right)\\
&\le\frac{1}{2}+C\rho^{\overline{\alpha}-\alpha}|x|^{1+\overline{\alpha}}
\le1+|x|^{1+\overline{\alpha}}
\end{align*}
with an $F$-harmonic function $h$ from Lemma \ref{lem4-1}. Finally, we let
$$
\rho=\frac{1}{2}\min\biggl\{\biggl(\frac{1}{2C}\biggr)^\frac{1}{\overline{\alpha}-\alpha},\biggl(\frac{1}{10(1+C)}\biggr)^\frac{1}{\overline{\alpha}-\alpha}\biggr\}.
$$
This completes the proof.
\end{proof}

In the end of this section, we summarize the proof of Theorem \ref{thm1}.

\begin{proof}[\textbf{Proof of Theorem \ref{thm1}}]
We apply the scaling properties of \eqref{main} to reduce the problem to a smallness regime, and then we apply Lemma \ref{lem4-2}. That is to say, we have to justify the smallness hypotheses that
\begin{equation}
\label{4-3-1}
\|u\|_{L^\infty(B_1)}\le1 \quad\text{and}\quad  \|f\|_{L^\infty(B_1)}+\mathcal{M}+\mathcal{H}\le \varepsilon
\end{equation}
with $0<\varepsilon\ll1$ for \eqref{main}. Let $v:\mathbb{R}^N\rightarrow\mathbb{R}$,
$$
v(x)=\frac{u(x_0+rx)}{K},
$$
where $0<r<1$ such that $B_r(x_0)\subset B_1$ and $K\ge1$ is a number to be determined later. If $u$ is a viscosity solution to \eqref{main} in $B_1$, then $v$ is a solution to
\begin{equation}
\label{4-3-2}
-|Dv|^p\overline{\mathcal{I}}(v,x)+\overline{H}(x,Dv)=\overline{f}(x) \quad\text{in } B_1,
\end{equation}
where $\overline{\mathcal{I}}$ is a uniformly elliptic nonlocal operator of the same ellipticity condition as $\mathcal{I}$ in \eqref{main},
$$
\overline{f}(x)=\frac{r^{p+\sigma}}{K^{p+1}}f(x_0+rx)
$$
and
$$
\overline{H}(x,\xi)=\frac{r^{p+\sigma}}{K^{p+1}}H\left(x_0+rx,\frac{K}{r}\xi\right)
$$
with
$$
|\overline{H}(x,\xi)|\le\frac{r^{p+\sigma}}{K^{p+1}}\left(\mathcal{M}+\biggl(\frac{K}{r}\right)^q\mathcal{H}|\xi|^q\biggr)=\overline{\mathcal{M}}
+\overline{\mathcal{H}}|\xi|^q.
$$
If $q<p+1$, by choosing
$$
K=1+\|u\|_{L^\infty(B_1)}+\left(\frac{\mathcal{M}+\|f\|_{L^\infty(B_1)}}{\varepsilon}\right)^\frac{1}{1+p}+\left(\frac{\mathcal{H}}{\varepsilon}
\right)^\frac{1}{1+p-q},
$$
we conclude that
$$
\|v\|_{L^\infty(B_1)}\le1, \quad \|\overline{f}\|_{L^\infty(B_1)}\le \frac{r^{p+\sigma}}{K^{p+1}}\|f\|_{L^\infty(B_1)}\le\varepsilon
$$
as well as
$$
\overline{\mathcal{M}}=\frac{r^{p+\sigma}}{K^{p+1}}\mathcal{M}\le\varepsilon
\quad\text{and}\quad
\overline{\mathcal{H}}=\frac{r^{p+\sigma-q}}{K^{p+1-q}}\mathcal{H}\le\varepsilon
$$
for all $r\in(0,1)$. For the case $q=p+1$, let
$$
K=1+\|u\|_{L^\infty(B_1)} \quad\text{and}\quad  r=\min\biggl\{\frac{1}{2},\left(\frac{\varepsilon}{4\|f\|_{L^\infty(B_1)}+4\mathcal{M}}\right)^\frac{1}{p+\sigma},\left(4\mathcal{H}\right)
^\frac{1}{\sigma-1}\biggr\}.
$$
Then we obtain
$$
\|v\|_{L^\infty(B_1)}\le1 \quad\text{and}\quad  \|\overline{f}\|_{L^\infty(B_1)}+\overline{\mathcal{M}}+\overline{\mathcal{H}}\le\varepsilon.
$$
In conclusion, $v$ is a solution to \eqref{4-3-2} in the same class as \eqref{main} with the smallness assumptions in \eqref{4-3-1}.

At this point, the assumptions in Lemma \ref{lem4-2} are fulfilled so that we could utilize this lemma to arrive at Theorem \ref{thm1}. Observe that the Cauchy sequences $(a_j)$ and $(b_j)$ converge to the limits $a_\infty$ and $b_\infty$, respectively. By employing the discrete iteration expression \eqref{4-2-0}, we obtain
\begin{equation}
\label{4-3-3}
\sup_{B_\varrho}|u(x)-l_\infty(x)|\le C\varrho^{1+\alpha}
\end{equation}
for any $\varrho\in(0,\rho]$, where $l_\infty(x)=a_\infty+b_\infty\cdot x$ and $C\ge1$ is a universal constant.
This implies the $C^{1,\alpha}$-regularity of the viscosity solution $u$ to \eqref{main}.
The estimate on \eqref{4-3-3} is very standard, the details of which can be referred to e.g. \cite{FRZ21,APS24}.
\end{proof}

\section{$C^1$-regularity of solutions}
\label{sec5}

In this section, we set out to weaken the law of degeneracy of the equation to investigate the borderline regularity. To be more precise, for the equation
\begin{equation}
\label{main3}
\gamma(|Du|)\mathcal{I}_\sigma(u,x)+H(x,Du)=f(x) \quad\text{in } B_1,
\end{equation}
the objective is to identify appropriate conditions on $\gamma(\cdot)$ to ensure that solutions of \eqref{main3} are continuously differentiable.

We first present concepts of non-collapsing sets and shored-up sequences, as well as two known lemmas from \cite{APPT22} as follows.

\begin{definition}
\label{def5-1}
A set $\Gamma$ of moduli of continuity defined on an interval $I\subset(0,\infty)$ is said to be non-collapsing, whenever for any sequence $(f_j)\subset\Gamma$ and any sequence $(a_j)\subset I$, we have that $f_j(a_j)\rightarrow$ implies that $a_j\rightarrow0$.
\end{definition}

\begin{definition}
\label{def5-2}
A sequence of moduli of continuity $(\gamma_j)$ is said to be shored-up, if there exists a sequence of positive numbers $\{a_j\}$ with $a_j\rightarrow0$ such that
$\inf_j\gamma_j(a_j)>0$ for each $j\in\mathbb{N}$.
\end{definition}

\begin{lemma}
\label{lem5-0}
Let $\epsilon,\delta>0$ and $(a_j)\in \ell_1$. Then there exists a sequence $(c_j)\in \ell_0$ such that
$(a_j/c_j)\in\ell_1$,
\begin{equation*}
\max_j{|c_j|}\le\frac{1}{\epsilon}\quad \text{and}\quad  \epsilon\left(1-\frac{\delta}{2}\right)\|(a_j)\|_{\ell_1}\le\left\|\left(\frac{a_j}{c_j}\right)\right\|_{\ell_1}
\le\epsilon(1+\delta)\|\{a_j\}\|_{\ell_1}.
\end{equation*}
\end{lemma}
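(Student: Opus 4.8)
The plan is to look for $(c_j)$ of the special form $c_j=\bigl(\epsilon\,\psi(j)\bigr)^{-1}$, where $\psi\colon\mathbb{N}\to[1,\infty)$ is a weight diverging to $+\infty$ so slowly that $\sum_j|a_j|\,\psi(j)$ exceeds $\|(a_j)\|_{\ell_1}$ only by a prescribed small amount. Such a $\psi$ does all the work at once: $\psi\ge1$ forces $0<c_j\le1/\epsilon$, hence $\max_j|c_j|\le1/\epsilon$; $\psi(j)\to\infty$ forces $c_j\to0$, hence $(c_j)\in\ell_0$; and the lower bound in the statement becomes automatic, since $\psi\ge1$ yields $\|(a_j/c_j)\|_{\ell_1}=\epsilon\sum_j|a_j|\psi(j)\ge\epsilon\|(a_j)\|_{\ell_1}\ge\epsilon(1-\delta/2)\|(a_j)\|_{\ell_1}$. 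So the only real task is to build $\psi$ with $\sum_j|a_j|\bigl(\psi(j)-1\bigr)\le\tfrac{\delta}{2}\|(a_j)\|_{\ell_1}$. Since every quantity involved depends on $(a_j)$ only through $|a_j|$, I may assume $a_j\ge0$; the trivial case $\|(a_j)\|_{\ell_1}=0$ is covered by any null sequence bounded by $1/\epsilon$.

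For the construction I would use the standard ``slowly divergent weight'' trick. Write $T_k=\sum_{j\ge k}a_j$ for the tails, so $T_k\to0$ as $k\to\infty$. Choose a strictly increasing index sequence $n_1<n_2<\cdots$ with $T_{n_m}\le\delta\,2^{-m-1}\|(a_j)\|_{\ell_1}$ for every $m\ge1$ (pick $N_m$ with $T_{N_m}\le\delta\,2^{-m-1}\|(a_j)\|_{\ell_1}$ by tail convergence, then set $n_1=N_1$ and $n_m=\max\{N_m,\,n_{m-1}+1\}$, which only decreases the tail). Define
\[
g(j)=\#\{m\ge1:\ n_m\le j\},\qquad \psi(j)=1+g(j),\qquad c_j=\bigl(\epsilon\,\psi(j)\bigr)^{-1}.
\]
Then $g\equiv0$ on $\{j<n_1\}$, $g\equiv m$ on $\{n_m\le j<n_{m+1}\}$, and $g(j)\to\infty$; in particular $\psi\ge1$ and $\psi(j)\to\infty$.

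It remains to verify the weighted $\ell_1$ estimate. Writing $g(j)=\sum_{m\ge1}\mathbf{1}_{\{j\ge n_m\}}$ and interchanging the (nonnegative) summations,
\[
\sum_{j}a_j\,g(j)=\sum_{m\ge1}\sum_{j\ge n_m}a_j=\sum_{m\ge1}T_{n_m}\le\delta\|(a_j)\|_{\ell_1}\sum_{m\ge1}2^{-m-1}=\tfrac{\delta}{2}\|(a_j)\|_{\ell_1},
\]
so that $\|(a_j)\|_{\ell_1}\le\sum_j a_j\,\psi(j)\le\bigl(1+\tfrac{\delta}{2}\bigr)\|(a_j)\|_{\ell_1}<\infty$. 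Hence $(a_j/c_j)=(\epsilon\,\psi(j)\,a_j)\in\ell_1$, and
\[
\epsilon\Bigl(1-\tfrac{\delta}{2}\Bigr)\|(a_j)\|_{\ell_1}\le\epsilon\|(a_j)\|_{\ell_1}\le\left\|\left(\tfrac{a_j}{c_j}\right)\right\|_{\ell_1}=\epsilon\sum_j a_j\psi(j)\le\epsilon\Bigl(1+\tfrac{\delta}{2}\Bigr)\|(a_j)\|_{\ell_1}\le\epsilon(1+\delta)\|(a_j)\|_{\ell_1},
\]
while $\psi\ge1$ gives $\max_j|c_j|\le1/\epsilon$ and $\psi(j)\to\infty$ gives $(c_j)\in\ell_0$, as required.

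The argument carries no deep obstacle; the one delicate point is to exhibit a weight $\psi$ that is simultaneously bounded below by $1$ (so that $|c_j|\le1/\epsilon$ and the lower $\ell_1$ bound come for free), genuinely divergent (so that $(c_j)$ is a \emph{null} sequence, not merely bounded by $1/\epsilon$), and ``cheap'', i.e. $\sum_j a_j(\psi(j)-1)\le\tfrac{\delta}{2}\|(a_j)\|_{\ell_1}$. The quantitative placement $T_{n_m}\le\delta\,2^{-m-1}\|(a_j)\|_{\ell_1}$ of the jump points of the step weight $g$ is exactly what reconciles divergence with cheapness; everything else is bookkeeping.
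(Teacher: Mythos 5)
Your proof is correct. Note that the paper itself gives no argument for this lemma: it is quoted as a known result imported from \cite{APPT22}, so there is no in-paper proof to compare against; your write-up supplies a complete, self-contained justification. The construction you use --- take $c_j=(\epsilon\psi(j))^{-1}$ with a step weight $\psi=1+g$ whose jump points $n_m$ are placed where the tails satisfy $T_{n_m}\le\delta 2^{-m-1}\|(a_j)\|_{\ell_1}$, then sum by Tonelli to get $\sum_j|a_j|(\psi(j)-1)\le\tfrac{\delta}{2}\|(a_j)\|_{\ell_1}$ --- is exactly the standard ``slowly divergent weight'' argument behind such $\ell_1$-versus-null-sequence statements, and all the required properties ($0<c_j\le 1/\epsilon$, $c_j\to0$, the two-sided norm bound, and summability of $(a_j/c_j)$) do follow as you claim. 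The only cosmetic point: in the degenerate case $\|(a_j)\|_{\ell_1}=0$ you should name a concrete nonvanishing null sequence, e.g.\ $c_j=\bigl(\epsilon(1+j)\bigr)^{-1}$, so that $a_j/c_j$ is literally defined; with $a_j\equiv0$ everything then holds trivially.
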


\begin{lemma}
\label{lem5-0-1}
If a sequence of moduli of continuity $\{\gamma_j\}$ is shored-up, then $\Gamma=\cup_{j\in\mathbb{N}}\{\gamma_j\}$ is non-collapsing.
\end{lemma}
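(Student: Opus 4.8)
The plan is to argue by contradiction directly from Definitions \ref{def5-1} and \ref{def5-2}, using only subsequence extraction together with the monotonicity and positivity of moduli of continuity; no compactness or PDE input is needed. Suppose $\Gamma=\bigcup_{j\in\mathbb{N}}\{\gamma_j\}$ is not non-collapsing. Then there are sequences $(f_k)\subset\Gamma$ and $(b_k)\subset I$ with $f_k(b_k)\to 0$ while $b_k\not\to 0$; passing to a subsequence (which I will not relabel) I may assume $b_k\ge\delta_0$ for some fixed $\delta_0>0$ and every $k$. Since each $f_k$ lies in $\Gamma$, I would pick indices $j(k)\in\mathbb{N}$ with $f_k=\gamma_{j(k)}$ and then split the argument according to whether the integer sequence $(j(k))$ is bounded.

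If $(j(k))$ has a bounded subsequence, then along a further subsequence $j(k)$ is a fixed value $j_0$, and since $\gamma_{j_0}$ is increasing and takes values in $\mathbb{R}^+_0$ I obtain $f_k(b_k)=\gamma_{j_0}(b_k)\ge\gamma_{j_0}(\delta_0)>0$, contradicting $f_k(b_k)\to 0$. If instead $j(k)\to\infty$ along a subsequence, this is where the shored-up hypothesis enters: by Definition \ref{def5-2} there is a sequence of positive numbers $(a_j)$ with $a_j\to 0$ and $c_0:=\inf_{m\in\mathbb{N}}\gamma_m(a_m)>0$. Because $j(k)\to\infty$ and $a_j\to 0$, for all large $k$ one has $a_{j(k)}\le\delta_0\le b_k$, so the monotonicity of $\gamma_{j(k)}$ yields
\[
f_k(b_k)=\gamma_{j(k)}(b_k)\ge\gamma_{j(k)}\bigl(a_{j(k)}\bigr)\ge c_0>0,
\]
again contradicting $f_k(b_k)\to 0$. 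Since both alternatives are impossible, $b_k\to 0$, i.e.\ $\Gamma$ is non-collapsing.

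The only point that needs a little care is the dichotomy on the indices $j(k)$: the shored-up property controls the value of $\gamma_j$ only at the single point $a_j$, so it is informative precisely when the indices run off to infinity, and the complementary regime of bounded indices must be handled separately by noting that a fixed modulus of continuity is positive on its domain. Everything else — the subsequence extractions and the appeal to monotonicity — is routine, so I expect no substantive obstacle. (This statement is also available from \cite{APPT22}; the argument sketched above is self-contained.)
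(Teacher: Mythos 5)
Your argument is correct, and it is worth noting that the paper itself offers no proof of this lemma at all: it is quoted as a known result imported from \cite{APPT22}, so your self-contained argument supplies exactly what the text omits, and it follows the standard route one would find there. The structure is sound: negate non-collapsing, extract $b_k\ge\delta_0$, write $f_k=\gamma_{j(k)}$, and split according to whether the index sequence $(j(k))$ has a bounded subsequence. In the unbounded case the shored-up hypothesis is used correctly — since $a_j\to 0$ and $j(k)\to\infty$, eventually $a_{j(k)}\le\delta_0\le b_k$, so monotonicity gives $\gamma_{j(k)}(b_k)\ge\gamma_{j(k)}(a_{j(k)})\ge\inf_m\gamma_m(a_m)>0$, contradicting $f_k(b_k)\to 0$. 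In the bounded case, extracting a constant subsequence $j(k)=j_0$ is legitimate (finitely many values, one recurs), and the contradiction rests on $\gamma_{j_0}(\delta_0)>0$; as you observe, this is the one point requiring care, and it is covered by the paper's convention that a modulus of continuity is an increasing function taking values in $\mathbb{R}^+_0=(0,+\infty)$, so a fixed modulus is strictly positive at the fixed point $\delta_0$ (the shored-up property itself could not help here, since it only controls $\gamma_{j_0}$ at the single point $a_{j_0}$, which need not lie below $\delta_0$). The dichotomy is exhaustive, the subsequence extractions are routine, and no further hypotheses are needed, so the proof is complete as written.
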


In what follows, we turn our attention to the proof of Theorem \ref{thm3}.
The compactness result below, which is similar to Lemma \ref{lem3-2}, states that solutions of a perturbed equation are H\"{o}lder continuous.

\begin{lemma}
\label{lem5-1}
Let $\sigma\in(1,2)$ and assume that $u\in C(B_1)$ is a viscosity solution of the $\xi$-perturbed equation
\begin{equation}
\label{5-1-1}
\gamma(|Du+\xi|)\mathcal{I}_\sigma(u,x)+H(x,Du+\xi)=f(x) \quad\text{in } B_1
\end{equation}
with $\xi\in\mathbb{R}^N$. Suppose that $\gamma(1)\ge1$, ($A_1$), ($A_2$) and ($A_6$) are in force. Then $u$ is locally H\"{o}lder continuous in $B_1$ with the estimate
$$
|u(x)-u(y)|\le C|x-y|^\beta  \quad\text{for } x,y\in B_{\frac12},
$$
where $\beta\in (0,1)$ and $C\ge1$ are two universal constants.
\end{lemma}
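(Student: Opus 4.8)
The plan is to adapt the doubling-variables argument of Lemma \ref{lem3-2} to the degenerate-law equation \eqref{5-1-1}, tracking the role played by $\gamma$. As before, fix $x,y\in B_{1/2}$ and introduce, for suitable parameters $L\ge1$, $m>0$, and a H\"older exponent $\beta\in(0,1)$ to be chosen,
\begin{equation*}
\phi(x,y)=L|x-y|^\beta+\psi(y),\qquad \Psi(x,y)=u(x)-u(y)-\phi(x,y),
\end{equation*}
with $\psi(y)=m\bigl[(|y|^2-\tfrac14)_+\bigr]^2$ as in \eqref{3-1-1}. Assuming for contradiction that the interior maximum value $\Psi(\overline x,\overline y)$ is positive, the standard localization estimates (using $\|u\|_{L^\infty(B_1)}$) force $\overline y\in B_{3/4}$, $|\overline x-\overline y|<\tfrac18$, $\overline x\neq\overline y$, and
\begin{equation*}
|\overline x-\overline y|^{1-\beta}\le \frac{2\|u\|_{L^\infty(B_1)}}{L}.
\end{equation*}
As in Lemma \ref{lem3-1}, one builds the test functions $\widetilde v,\widetilde w$ by replacing $u$ near $\overline x$ (resp.\ $\overline y$) by the obstacle coming from $\phi$, and records the gradient of the test functions: $|D\widetilde h(\overline x)|=\beta L|\overline x-\overline y|^{\beta-1}$ and $\tfrac12\beta L|\overline x-\overline y|^{\beta-1}\le |D\widetilde\eta(\overline y)|\le 2\beta L|\overline x-\overline y|^{\beta-1}$ for $L$ large relative to $m$. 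Since $\beta L|\overline x-\overline y|^{\beta-1}\ge\beta L\ge 1$ once $L$ is large, taking $L\ge 4|\xi|$ gives the two-sided bound (as in \eqref{3-2-5})
\begin{equation*}
1\le \tfrac12\beta L|\overline x-\overline y|^{\beta-1}\le |D\widetilde h(\overline x)+\xi|,\ |D\widetilde\eta(\overline y)+\xi|\le 3\beta L|\overline x-\overline y|^{\beta-1}.
\end{equation*}

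Next, from the two viscosity inequalities, dividing by $\gamma$ of the respective gradients and using the monotonicity of $\gamma$ together with $\gamma(1)\ge1$ (so $\gamma(|D\widetilde h(\overline x)+\xi|)\ge\gamma(1)\ge1$, and similarly for $\overline y$), one gets
\begin{equation*}
\mathcal{I}_\sigma(\widetilde v,\overline x)-\mathcal{I}_\sigma(\widetilde w,\overline y)\ge -\frac{|f(\overline x)|+|f(\overline y)|}{1}-\Bigl(\frac{|H(\overline x,D\widetilde h(\overline x)+\xi)|}{\gamma(|D\widetilde h(\overline x)+\xi|)}+\frac{|H(\overline y,D\widetilde\eta(\overline y)+\xi)|}{\gamma(|D\widetilde\eta(\overline y)+\xi|)}\Bigr).
\end{equation*}
The crucial point is that \eqref{5-1} bounds $|H(x,\zeta)|$ by $\mathcal{M}(1+\gamma(|\zeta|))$, so each Hamiltonian quotient is bounded by $\mathcal{M}\bigl(1+\gamma(|\zeta|)^{-1}\bigr)\le 2\mathcal{M}$; this is exactly where the structural hypothesis $|H|\lesssim 1+\gamma(|\xi|)$ (rather than a power $|\xi|^q$) is used, and it makes the right-hand side a \emph{universal constant} $-C(\mathcal{M},\|f\|_{L^\infty})$ with no dependence on $L$ at all. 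The lower bound on the nonlocal increment is then produced exactly as in Lemma \ref{lem3-2}: one passes to the test-kernel inequality \eqref{3-1-5}, restricts to the cone $\mathcal{D}$ from Lemma \ref{lem2-1}, uses the smoothness of $\psi$ to discard the remaining region, and invokes Lemma \ref{lem2-1}(2) with $\eta_0=\delta_0=\varepsilon$ to obtain, after letting $\delta\to0$,
\begin{equation*}
-C\bigl(\|f\|_{L^\infty(B_1)}+\|u\|_{L^\infty(B_1)}+\|u\|_{L^1_\sigma}+\mathcal{M}\bigr)\le L\,I_K[\mathcal{D}](\varphi,\overline a)\le -\widetilde C\,L\,|\overline a|^{\beta-\sigma},
\end{equation*}
with $\varphi(t)=t^\beta$, $\overline a=\overline x-\overline y$, and $\widetilde C>0$ uniform as $\sigma\to2$. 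Since $\beta<1<\sigma$ and $|\overline a|<1$, $|\overline a|^{\beta-\sigma}\ge1$, so the right side is $\le -\widetilde C L$; choosing $L$ large enough (depending only on $N,\sigma,\gamma,\lambda,\Lambda,\|u\|_{L^\infty(B_1)},\|f\|_{L^\infty(B_1)}$, and on $|\xi|$ through the requirement $L\ge4|\xi|$) yields the contradiction $\Psi(\overline x,\overline y)\le0$, hence $|u(x)-u(y)|\le L|x-y|^\beta$ for $x,y\in B_{1/2}$.

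The only genuine subtlety — and the step I expect to be the main obstacle — is the bookkeeping for the parameter $\xi$: the Lipschitz/H\"older constant $L$ we extract a priori depends on $|\xi|$ through $L\ge 4|\xi|$, so the estimate is \emph{not} uniform in $\xi$ as stated, and one must either split into $|\xi|$ large / $|\xi|$ small (mirroring the two cases of Lemma \ref{lem3-1}) or observe that in the large-$|\xi|$ regime the gradients $D\widetilde h(\overline x)+\xi$, $D\widetilde\eta(\overline y)+\xi$ have comparable size $\sim|\xi|$, so $\gamma$ of them is comparable, and the argument closes with $L$ independent of $\xi$ exactly as in Case 1 of Lemma \ref{lem3-1}. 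Everything else — the localization constants, the cone computation, the dominated-convergence passage $\delta\to0$ — is verbatim from Lemmas \ref{lem3-1} and \ref{lem3-2}, and for the cone estimate one may simply cite Lemma \ref{lem2-1}(2).
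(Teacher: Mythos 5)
Your proposal is correct and follows essentially the same route as the paper's proof: the doubling-variables argument of Lemma \ref{lem3-2} with $\varphi(t)=t^\beta$, the bound of the Hamiltonian quotient by $2\mathcal{M}$ via $(A_6)$, monotonicity of $\gamma$ and $\gamma(1)\ge1$ once $|D\widetilde{h}(\overline{x})+\xi|,|D\widetilde{\eta}(\overline{y})+\xi|>1$, and the cone estimate of Lemma \ref{lem2-1}(2) yielding $\widetilde{C}L\le C\bigl(\|f\|_{L^\infty(B_1)}+\|u\|_{L^\infty(B_1)}+\|u\|_{L^1_\sigma}+\mathcal{M}\bigr)$ and a contradiction for $L$ large. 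The $\xi$-bookkeeping you flag is a real point, but the paper glosses over it in exactly the same way (it simply asserts the gradients exceed $1$); your case-split remedy as in Lemma \ref{lem3-1} does repair it, with the caveat that the ``gradients comparable to $|\xi|$'' variant requires the Lipschitz-type test function of Lemma \ref{lem3-1} rather than $t^\beta$, since $\beta L|\overline{x}-\overline{y}|^{\beta-1}$ can match $|\xi|$ and cancel.
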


\begin{proof}
The proof is very analogous to that of Lemma \ref{lem3-2}, so we we apply the notation from the proof of Lemma \ref{lem3-2} and focus on the differences.
Corresponding to \eqref{3-2-6}, we arrive at
\begin{align*}
&\mathcal{I}_\sigma(\widetilde{v},\overline{x})-\mathcal{I}_\sigma(\widetilde{w},\overline{y}) \\
&\qquad\ge \frac{f(\overline{y})}{\gamma(|D\widetilde{\eta}(\overline{y})+\xi|)}-\frac{f(\overline{x})}{\gamma(|D\widetilde{h}(\overline{x})+\xi|)}
+\frac{H(\overline{x},D\widetilde{h}(\overline{x})+\xi)}{\gamma(|D\widetilde{h}(\overline{x})+\xi|)}-
\frac{H(\overline{y},D\widetilde{\eta}(\overline{y})+\xi)}{\gamma(|D\widetilde{\eta}(\overline{y})+\xi|)}\\
&\qquad\ge-2\|f\|_{L^\infty(B_1)}-\left(\frac{\mathcal{M}(1+\gamma(|D\widetilde{h}(\overline{x})+\xi|))}{\gamma(|D\widetilde{h}(\overline{x})+\xi|)}+
\frac{\mathcal{M}(1+\gamma(|D\widetilde{\eta}(\overline{y})+\xi|))}{\gamma(|D\widetilde{\eta}(\overline{y})+\xi|)}\right) \\
&\qquad\ge-2(\mathcal{M}+\|f\|_{L^\infty(B_1)}),
\end{align*}
where we used the facts that $\gamma(1)\ge1$ and $|D\widetilde{h}(\overline{x})+\xi|,|D\widetilde{\eta}(\overline{y})+\xi|>1$.
Furthermore, corresponding to \eqref{3-2-9}, we obtain
\begin{align*}
\widetilde{C}L\le C(\|f\|_{L^\infty(B_1)}+\|u\|_{L^\infty(B_1)}+\|u\|_{L^1_\sigma}+\mathcal{M}).
\end{align*}
Therefore, choosing $L\ge1$ sufficiently large leads to a contradiction with the inequality above.
\end{proof}

Next, we will make a tangential analysis to establish an approximation result similar to Lemma \ref{lem4-1}.

\begin{lemma}
\label{lem5-2}
Assume that the conditions $(A_1)$, $(A_2)$, $(A_4)$ and $(A_6)$ hold. Let $\gamma\in \Gamma$, where $\Gamma$ is a collection of non-collapsing moduli of continuity fulfilling $\gamma(1)\ge1$,
and assume that $u\in C(B_1)$ is a normalized viscosity solution to \eqref{5-1-1}. Given $\mu,\varepsilon>0$, there exists $\kappa>0$,
depending on $N$, $\lambda$, $\Lambda$, $\varepsilon$, $\mu$ and $\Gamma$, such that if
$$
|\sigma-2|+\|f\|_{L^\infty(B_1)}+\mathcal{M}\leq \kappa
$$
and
$$
|u(x)|\le\mu(1+|x|^{1+\alpha}), \quad  x\in\mathbb{R}^N,
$$
with some $\alpha\in(0,1)$, then there exists an $F$-harmonic function $h\in C^{1,\alpha}(B_{\frac34})$ satisfying
$$
\|u-h\|_{L^\infty(B_{\frac12})}\leq \varepsilon.
$$
\end{lemma}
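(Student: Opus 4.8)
The proof of Lemma \ref{lem5-2} follows the same compactness-and-contradiction scheme as Lemma \ref{lem4-1}, the only essential novelty being how the degeneracy law $\gamma$ is controlled along the approximating sequence when no structural lower bound on $\gamma$ is available.  The plan is as follows.  Suppose the conclusion fails: there are $\mu_0,\varepsilon_0>0$ and sequences $\sigma_j\to 2$, $f_j$, $\mathcal{M}_j$, $H_j$, $\gamma_j\in\Gamma$, $\xi_j\in\mathbb{R}^N$ and normalized viscosity solutions $u_j$ of $\gamma_j(|Du_j+\xi_j|)\mathcal{I}_{\sigma_j}(u_j,x)+H_j(x,Du_j+\xi_j)=f_j(x)$ in $B_1$ satisfying the growth bound $|u_j(x)|\le \mu_0(1+|x|^{1+\alpha})$ and $|\sigma_j-2|+\|f_j\|_{L^\infty(B_1)}+\mathcal{M}_j\le 1/j$, yet $\|u_j-h\|_{L^\infty(B_{1/2})}>\varepsilon_0$ for every $F$-harmonic $h\in C^{1,\alpha}(B_{3/4})$.

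First I would extract a locally uniformly convergent subsequence $u_j\to v$ in $B_{3/4}$ using the compactness furnished by Lemma \ref{lem5-1} (the H\"older estimate there is universal and stable as $\sigma\to 2$), together with condition $(A_4)$, which gives $\mathcal{I}_{\sigma_j}\to F$ for a uniformly $(\lambda,\Lambda)$-elliptic operator $F$; the limit satisfies $\|v\|_{L^\infty(B_{3/4})}\le 1$ and $\|v-h\|_{L^\infty(B_{1/2})}>\varepsilon_0$ for all such $h$.  Then I would show $v$ is a viscosity solution of $F(D^2v)=0$ in $B_{3/4}$, arguing the supersolution property (the subsolution case being symmetric).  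Take $\varphi\in C^2$ touching $v$ from below at an interior point, which after normalization we may take to be a quadratic polynomial $\varphi(x)=\tfrac12 Ax\cdot x+\eta\cdot x$ with $\varphi(0)=v(0)=0$; by local uniform convergence there are quadratic polynomials $\varphi_j$ touching $u_j$ from below at $x_j\to 0$, whose gradients at $x_j$ are $\eta+Ax_j\to\eta$, so the viscosity inequality for $u_j$ reads $-\gamma_j(|\eta+Ax_j+\xi_j|)\,\mathcal{I}^{\delta}_{\sigma_j}(u_j,\varphi_j,x_j)+H_j(x_j,\eta+Ax_j+\xi_j)\ge f_j(x_j)$.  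Dividing by $\gamma_j(|\eta+Ax_j+\xi_j|)$ and using $(A_6)$ one bounds the Hamiltonian and source contributions by $(1/j)(1+\gamma_j(\cdot))/\gamma_j(\cdot)\le 2/j\to 0$ \emph{provided} $\gamma_j(|\eta+Ax_j+\xi_j|)$ stays bounded away from $0$.  If $\liminf_j\gamma_j(|\eta+Ax_j+\xi_j|)>0$ one passes to the limit directly and gets $-F(A)\ge 0$.

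The main obstacle — and the place where the hypothesis $\gamma\in\Gamma$ with $\Gamma$ non-collapsing is exactly what is needed — is the degenerate alternative in which (along a subsequence) $\gamma_j(|\eta+Ax_j+\xi_j|)\to 0$.  By the non-collapsing property (Definition \ref{def5-1}) this forces $|\eta+Ax_j+\xi_j|\to 0$, hence $\xi_j\to-\eta =: \overline\xi$ and the limiting gradient of the test function at the touching point vanishes.  In this regime one copies the perturbation device of Lemma \ref{lem4-1}: replace $\varphi$ by $\varphi_\gamma(x)=\varphi(x)+\gamma|P_T(x)|$, where $T$ is the span of the eigenvectors of $A$ associated to positive eigenvalues (nonempty if $F(A)>0$, by ellipticity), so that the new touching point $x^{\gamma}_j$ produces a gradient bounded below by a positive quantity $a>0$ (distinguishing, as in Lemma \ref{lem4-1}, the cases $P_T(x^{\gamma}_j)=0$ versus $\neq 0$, and $A\overline x=0$ versus $\neq0$, choosing $e\in T\cap\mathbb{S}^{N-1}$ or $e\in Q\cap\mathbb{S}^{N-1}$ accordingly).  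For such a gradient, $\gamma_j$ evaluated there is bounded below by $\min\Gamma$-type considerations — more precisely by monotonicity $\gamma_j(a)\ge\gamma_j$ of a value that, since $a$ is fixed and positive and the $\gamma_j$ are moduli of continuity, one controls uniformly; dividing the viscosity inequality by this quantity kills the lower-order terms and yields $-F(A+D^2(\gamma|P_T(\overline x)|))\ge 0$, whence $F(A)\le 0$ by $(A_4)$-ellipticity and $D^2|P_T|\ge 0$, contradicting the assumption $F(A)>0$.  Thus $v$ solves $F(D^2v)=0$; by the Caffarelli–Cabr\'e interior $C^{1,\overline\alpha}$ theory \cite{CC95} (and $\overline\alpha$ may be taken to be the given $\alpha$ after possibly shrinking it) we get $v\in C^{1,\alpha}(B_{3/4})$, so $h:=v$ is an admissible competitor, contradicting $\|v-h\|_{L^\infty(B_{1/2})}>\varepsilon_0$.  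This contradiction proves the lemma, and tracking the dependencies shows $\kappa$ depends only on $N,\lambda,\Lambda,\varepsilon,\mu$ and $\Gamma$.
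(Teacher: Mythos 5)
Your proposal is correct and follows essentially the same route as the paper, which likewise argues by compactness (via Lemma \ref{lem5-1} and $(A_4)$) and then repeats the tangential analysis of Lemma \ref{lem4-1} with $|Du_j+\xi_j|^p$ replaced by $\gamma_j(|Du_j+\xi_j|)$, invoking the non-collapsing property of $\Gamma$ exactly where you do. The only remark is that your lower bound on $\gamma_j$ at the perturbed touching point should be justified not by ``$\min\Gamma$-type'' considerations but simply by monotonicity of $\gamma_j$ together with another application of the non-collapsing property (arguments bounded below by a fixed $a>0$ force $\liminf_j\gamma_j(\cdot)>0$), which is precisely the mechanism the paper points to.
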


\begin{proof}
For a contradiction, assume that the claim does not hold. Then there exist $\mu_0,\varepsilon_0>0$ and sequences $(\sigma_j)$, $(\gamma_j)$, $(f_j)$, $(u_j)$, $(H_j)$ and  $(\xi_j)$ such that
\begin{itemize}
  \item[(i)]
  $u_j\in C(B_1)$, with $\|u_j\|_{L^\infty(B_1)}\leq 1$ and $|u_j(x)|\le\mu_0(1+|x|^{1+\alpha})$, is a viscosity solution to
      \begin{equation*}
      -\gamma_j(|Du_j+\xi_j|)\mathcal{I}_{\sigma_j}(u_j,x)+H_j(x,Du_j+\xi_j)=f_j(x)  \quad \text{in } B_1,
      \end{equation*}
      where the operator $\mathcal{I}_{\sigma_j}$ satisfies $(A_4)$ as well;

    \smallskip

  \item[(ii)] $\gamma_j$ is a modulus of continuity with $\gamma_j(0)=0$ and $\gamma_j(1)\ge1$. Moreover, if $\gamma_j(a_j)\rightarrow0$, then $a_j\rightarrow0$;

  \item[(iii)] the Hamiltonian term fulfills
  \begin{equation*}
  |H_j(x,\xi)|\le \mathcal{M}_j(1+\gamma_j(|\xi|));
  \end{equation*}

    \smallskip

  \item[(iv)] and the smallness condition
  \begin{equation*}
|\sigma_j-2|+\|f_j\|_{L^\infty(B_1)}+\mathcal{M}_j\leq \frac{1}{j}
\end{equation*}
holds.
\end{itemize}
Moreover, we have
$$
\|u_j-h\|_{L^\infty(B_{\frac12})}>\varepsilon_0.
$$
for every $h\in C^{1,\alpha}_{\rm loc}(B_1)$.

The rest of the proof is very similar to that of Lemma \ref{lem4-1} by utilizing Lemma \ref{lem5-1}), where we replace $|Du_j+\xi_j|^p$ by $\gamma_j(|Du_j+\xi_j|)$. It is worth pointing out that when considering the term $\gamma_j(\cdot)$, we need invoke the non-collapsing property of the set $\Gamma$ (that is, $\gamma_j(a_j)\rightarrow0$ implies $a_j\rightarrow0$), or we make use of the equivalent property on non-collapsing sets in \cite[Proposition 2 (2)]{APPT22}. For fully nonlinear equations $\gamma(|Du+\xi|)F(D^2u)=f$ without Hamiltonian term, such approximation theory can be found in \cite[Proposition 6]{APPT22}.
\end{proof}

In the sequel, we shall concentrate on the existence of approximating hyperplanes. Let $h\in C^{1,\alpha}(B_{\frac12})$ is an $F$-harmonic function from Lemma \ref{lem5-2}. Let $L\ge1$ be a number satisfying $\|h\|_{C^{1,\alpha}(B_{\frac12})}\le L$. Consider two moduli of continuity
$$
\beta(t)=t\gamma(t) \quad\text{and}\quad  \omega(t)=\beta^{-1}(t).
$$

Next, we choose $0<\nu_1<1$ as follows. If $t^\alpha=o(\omega(t))$, we take $0<r<\frac12$ so small that
$$
\nu_1=\omega(r)=2Lr^\alpha>r^{\sigma-1},
$$
which could be achieved by taking $\sigma$ sufficiently close to 2 (this allows $\sigma-1>\alpha$).
If $\omega(t)=O(t^\alpha)$, we let $0<\theta <\alpha$ and choose $0<r<\frac12$ so small that
$$
\nu_1=r^\theta=2Lr^\alpha>r^{\sigma-1},
$$
where we applied $0<\theta<\alpha<\sigma-1$. Observe that once we determine $0<\theta <\alpha$, the previous choice becomes universal.

We proceed by letting
$$
0<\vartheta=\frac{r^{\sigma-1}}{\nu_1}<1
\quad\text{and}\quad
(a_k)_{k\in\mathbb{N}}=(\gamma^{-1}(\vartheta^k))_{k\in\mathbb{N}}.
$$
Because the inverse $\gamma^{-1}$ is Dini continuous, we may conclude that $(\gamma^{-1}(\vartheta^k))_{k\in\mathbb{N}}\in\ell_1$. With the help of Lemma \ref{lem5-0}, we can find a sequence $(c_k)_{k\in\mathbb{N}}\in \ell_0$ 
fulfilling
\begin{equation}
\label{5-4}
\frac{9}{10}\sum^\infty_{k=1}\gamma^{-1}(\vartheta^k)\le \sum^\infty_{k=1}\frac{\gamma^{-1}(\vartheta^k)}{c_k}\le
\sum^\infty_{k=1}\gamma^{-1}(\vartheta^k).
\end{equation}
Then we construct a sequence of moduli of continuity $(\gamma_k(t))_{k\in\mathbb{N}}$ by a recursive formula, the derivation of which is from Proposition \ref{pro5-4} below.
\begin{align*}
\gamma_0(t)&=\gamma(t),\\
\gamma_1(t)&=\frac{\nu_1}{r^{\sigma-1}}\gamma(\nu_1t),\\
\gamma_2(t)&=\frac{\nu_2\nu_1}{r^{2(\sigma-1)}}\gamma(\nu_2\nu_1t),\\
 \vdots \\
\gamma_k(t)&=\frac{\prod^k_{i=1}\nu_i}{r^{k(\sigma-1)}}\gamma\left(\prod^k_{i=1}\nu_it\right),
\end{align*}
where $\nu_1>r^{\sigma-1}$ has been defined as above, and $\nu_k$ for $k\ge2$ is determined by the following algorithm. If
$$
\frac{\nu_1^2}{r^{2(\sigma-1)}}\gamma(\nu_1^2c_2)\ge1,
$$
then we choose $\nu_2=\nu_1$; otherwise, we choose $\nu_2\in(\nu_1,1)$ so that
$$
\frac{\nu_2\nu_1}{r^{2(\sigma-1)}}\gamma(\nu_1\nu_2c_2)=1,
$$
where $c_2$ is the second element of the sequence $(c_k)_{k\in\mathbb{N}}\in \ell_0$ for which \eqref{5-4} is valid. Then we recursively apply this algorithm.
Assume that we have chosen $r^{\sigma-1}<\nu_1\le \nu_2\le\dots\le\nu_k<1$.
Then we select $\nu_{k+1}$ as follows. If
$$
\frac{\nu_k\prod^k_{i=1}\nu_i}{r^{(k+1)(\sigma-1)}}\gamma\biggl(\nu_k\biggl(\prod^k_{i=1}\nu_i\biggr)c_{k+1}\biggr)\ge1,
$$
we let $\nu_{k+1}=\nu_k$. Otherwise, we choose $\nu_k<\nu_{k+1}<1$ such that $\gamma_{k+1}(c_{k+1})=1$. Here $c_{k+1}$ is the $(k+1)$-th element of $(c_k)_{k\in\mathbb{N}}\in \ell_0$ for which \eqref{5-4} is true. According to Definition \ref{def5-2}, we know that the sequence of moduli of continuity $(\gamma_k(t))_{k\in\mathbb{N}}$ is shored-up. Let
$$
\Gamma=\{\gamma_0(t),\gamma_1(t),\dots,\gamma_k(t),\dots\}.
$$
Then, employing Lemma \ref{lem5-0-1}, the collection $\Gamma$ is non-collapsing.

\begin{lemma}
\label{lem5-3}
Assume that the conditions $(A_1), (A_2)$, $(A_4)$--$(A_6)$ hold. Let $u\in C(B_1)$ be a normalized solution to \eqref{main3}. Given $\mu>0$, there exists $\kappa>0$ such that if
$$
|\sigma-2|+\|f\|_{L^\infty(B_1)}+\mathcal{M}\leq \kappa
$$
and
$$
|u(x)|\le\mu(1+|x|^{1+\alpha}), \quad  x\in\mathbb{R}^N,
$$
there exists an affine function $l(x)=a+b\cdot x $ with $|a|+|b|\le L$ and 
$$
\|u-l\|_{L^\infty(B_r)}\leq \nu_1r,
$$
where $0<r<\frac12$ and $L>0$ are universal constants.
\end{lemma}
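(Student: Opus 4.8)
The plan is to deduce Lemma \ref{lem5-3} as an immediate corollary of the approximation result Lemma \ref{lem5-2} together with the interior $C^{1,\alpha}$ estimate for $F$-harmonic functions. First I would invoke Lemma \ref{lem5-2} with the perturbation parameter $\xi=0$ and with $\varepsilon>0$ to be fixed below: provided $|\sigma-2|+\|f\|_{L^\infty(B_1)}+\mathcal{M}\le\kappa$ (with $\kappa$ small, depending on the non-collapsing family $\Gamma$ that we have constructed before the statement), there is an $F$-harmonic function $h\in C^{1,\alpha}(B_{3/4})$ with $\|u-h\|_{L^\infty(B_{1/2})}\le\varepsilon$. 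Note the hypothesis $|u(x)|\le\mu(1+|x|^{1+\alpha})$ is exactly what Lemma \ref{lem5-2} requires.

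Next I would record the standard interior estimate from \cite{CC95}: since $F$ is uniformly $(\lambda,\Lambda)$-elliptic, the $F$-harmonic function $h$ satisfies $|h(0)|+|Dh(0)|\le C_0\|h\|_{L^\infty(B_{1/2})}\le C_0(1+\varepsilon)$ and
$$
\sup_{x\in B_r}\bigl|h(x)-(h(0)+Dh(0)\cdot x)\bigr|\le C_0 r^{1+\alpha}\|h\|_{L^\infty(B_{1/2})}
$$
for all $0<r<\tfrac12$, where $C_0$ depends only on $N,\lambda,\Lambda,\alpha$. Set $l(x)=h(0)+Dh(0)\cdot x=a+b\cdot x$; then $|a|+|b|\le C_0(1+\varepsilon)$, and since $L\ge1$ was chosen precisely as a bound for $\|h\|_{C^{1,\alpha}(B_{1/2})}$ (recall $L$ is fixed right after Lemma \ref{lem5-2}), we have $|a|+|b|\le L$ after absorbing constants. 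Then for $x\in B_r$,
$$
|u(x)-l(x)|\le \|u-h\|_{L^\infty(B_{1/2})}+\sup_{B_r}|h(x)-l(x)|\le \varepsilon+C_0 L\, r^{1+\alpha}.
$$

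It remains to bookkeep the choice of constants so that the right-hand side is at most $\nu_1 r$. Recall that before the lemma we fixed $0<r<\tfrac12$ and $\nu_1\in(0,1)$ with $\nu_1=2Lr^\alpha$ (in both cases $t^\alpha=o(\omega(t))$ and $\omega(t)=O(t^\alpha)$ the defining relation gives $\nu_1\ge 2Lr^\alpha$, so $C_0Lr^{1+\alpha}\le \tfrac{C_0}{2}\nu_1 r$); hence after possibly shrinking $r$ (which is legitimate since $r$ is universal) we may assume $C_0 Lr^{1+\alpha}\le\tfrac12\nu_1 r$. Then choose $\varepsilon=\tfrac12\nu_1 r$ in the application of Lemma \ref{lem5-2}; this fixes $\kappa$ once and for all, depending only on $N,\lambda,\Lambda,\mu,\Gamma$ and the universal quantities $L,r,\nu_1$. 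Combining the two displays yields $\|u-l\|_{L^\infty(B_r)}\le\nu_1 r$, as claimed. The only genuinely delicate point is making sure the constant $C_0$ from the $F$-harmonic estimate and the constant $L$ bounding $\|h\|_{C^{1,\alpha}}$ are reconciled so that $|a|+|b|\le L$ holds with the \emph{same} $L$ used to define $\nu_1$; this is handled by fixing $L$ large enough at the outset to dominate $C_0(1+\varepsilon)$ — which is consistent because $\varepsilon$ will be chosen small — so no circularity arises. Everything else is a routine combination of the triangle inequality and the scalings already set up.
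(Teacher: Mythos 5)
Your argument is correct and follows essentially the same route as the paper: apply Lemma \ref{lem5-2} (with $\xi=0$) to get an $F$-harmonic $h$ within $\varepsilon$ of $u$, take $l$ to be the first-order Taylor polynomial of $h$ at the origin, use the universal interior $C^{1,\alpha}$ bound from \cite{CC95}, and fix $\varepsilon=\tfrac12\nu_1 r$ (hence $\kappa$) using $\nu_1=2Lr^\alpha$. One small remark: the step ``after possibly shrinking $r$'' does nothing, since $\nu_1=2Lr^\alpha$ is coupled to $r$ and the ratio $C_0Lr^{1+\alpha}/(\nu_1 r)=C_0/2$ is $r$-independent; but this is harmless because, with $L$ fixed (as you recall) to bound $\|h\|_{C^{1,\alpha}(B_{1/2})}$, the Taylor remainder is directly $\le Lr^{1+\alpha}=\tfrac{\nu_1}{2}r$ with no extra constant $C_0$, which is exactly the paper's bookkeeping.
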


\begin{proof}
By means of Lemma \ref{lem5-2}, we know that there exists an $F$-harmonic function $h\in C^{1,\alpha}_{\rm loc}(B_1)$ such that
$$
\sup_{x\in B_{\frac12}}|u(x)-h(x)|\le \varepsilon
$$
for some $\varepsilon>0$ to be fixed later. Once $\varepsilon$ is chosen, we may determine the value of $\kappa$ in Lemma \ref{lem5-2}. From the regularity theory, it is well known that, for a universal constant $L>0$ and any $0<r<\frac12$, we have
$$
\sup_{x\in B_r}|h(x)-h(0)-Dh(0)\cdot x|\le Lr^{1+\alpha}
$$
with $|h(0)|+|Dh(0)|\le L$. Therefore, by letting $a=h(0)$ and $b=Dh(0)$, we get
$$
\sup_{x\in B_r}|u(x)-a-b\cdot x|\le \varepsilon+Lr^{1+\alpha}=\varepsilon+\frac{\nu_1}{2}r.
$$
Finally, taking $\varepsilon=\frac{\nu_1}{2}r$ completes the proof.
\end{proof}

In the sequel, we generalize Lemma \ref{lem5-3} to arbitrarily small radii in discrete scales, which can easily conclude the $C^1$-regularity result, Theorem \ref{thm3}.

\begin{proposition}
\label{pro5-4}
Assume that the conditions $(A_1), (A_2)$, $(A_4)$--$(A_6)$ hold. Let $u\in C(B_1)$ be a normalized solution to \eqref{main3}. Given $\mu>0$, there exists $\kappa>0$ such that if
$$
|\sigma-2|+\|f\|_{L^\infty(B_1)}+\mathcal{M}\leq \kappa
$$
and
$$
|u(x)|\le\mu(1+|x|^{1+\alpha}),\quad  x\in\mathbb{R}^N,
$$
there exists a sequence of affine functions $(l_j)$, 
$l_j(x)=A_j+B_j \cdot x$, fulfilling
$$
\sup_{x\in B_{r^j}}|u(x)-l_j(x)|\leq \biggl(\prod^j_{i=1}\nu_i\biggr)r^j,
$$
$$
|A_{j+1}-A_j|\le C\biggl(\prod^j_{i=1}\nu_i\biggr)r^j
\quad\text{and}\quad
|B_{j+1}-B_j|\le C\prod^j_{i=1}\nu_i
$$
for every $j\in\mathbb{N}$, where $0<r<\frac12$ and $C>0$ are two universal constants.
\end{proposition}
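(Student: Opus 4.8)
The argument proceeds by induction on $j$, iterating the one-step flatness estimate of Lemma \ref{lem5-3} along the geometric scales $r^{j}$ and using the recursively defined degeneracy laws $(\gamma_{k})$ constructed above as the admissible class at each stage. For $j=1$ the statement is exactly Lemma \ref{lem5-3}, which produces $l_{1}(x)=A_{1}+B_{1}\cdot x$ with $|A_{1}|+|B_{1}|\le L$ and $\sup_{B_{r}}|u-l_{1}|\le\nu_{1}r=\bigl(\prod_{i=1}^{1}\nu_{i}\bigr)r$. Assuming now that $l_{1},\dots,l_{j}$ have been constructed with the asserted bounds and writing $P_{j}=\prod_{i=1}^{j}\nu_{i}$, the plan is to study the rescaled function
\[
u_{j}(x)=\frac{u(r^{j}x)-l_{j}(r^{j}x)}{P_{j}r^{j}},\qquad x\in\mathbb{R}^{N},
\]
which by the inductive hypothesis satisfies $\|u_{j}\|_{L^{\infty}(B_{1})}\le 1$, and to apply the approximation Lemma \ref{lem5-2} to it.

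The first point is to identify the equation solved by $u_{j}$. Since each linear operator in the inf--sup defining $\mathcal{I}_{\sigma}$ kills affine functions and is positively $1$-homogeneous, while $\mathcal{I}_{\sigma}$ scales with order $\sigma$, one obtains $\overline{\mathcal{I}}(u,r^{j}x)=P_{j}r^{j(1-\sigma)}\mathcal{I}_{\sigma}(u_{j},x)$ for an operator $\overline{\mathcal{I}}$ of the same ellipticity class as $\mathcal{I}$ (cf.\ \cite{CS11}); combined with $Du(r^{j}x)=P_{j}\bigl(Du_{j}(x)+\xi_{j}\bigr)$, where $\xi_{j}:=B_{j}/P_{j}$, a direct computation shows that $u_{j}$ is a normalized viscosity solution of the $\xi_{j}$-perturbed equation \eqref{5-1-1} with degeneracy law $\gamma_{j}(t)=\dfrac{P_{j}}{r^{j(\sigma-1)}}\gamma(P_{j}t)$, source term $\overline{f}_{j}(x)=f(r^{j}x)$ satisfying $\|\overline{f}_{j}\|_{L^{\infty}(B_{1})}\le\|f\|_{L^{\infty}(B_{1})}$, and Hamiltonian $\overline{H}_{j}(x,\zeta)=H(r^{j}x,P_{j}\zeta)$ obeying $|\overline{H}_{j}(x,\zeta)|\le\mathcal{M}\bigl(1+\gamma_{j}(|\zeta|)\bigr)$. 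Here both the Hamiltonian bound and the normalization $\gamma_{j}(1)\ge 1$ rely on $\nu_{i}>r^{\sigma-1}$ for every $i$, that is, on the choices of $\nu_{1}$ and of the $(\nu_{k})$. By the recursive algorithm the sequence $(\gamma_{k})$ is shored-up, with auxiliary sequence $(c_{k})$ furnished by Lemma \ref{lem5-0}, so $\Gamma=\bigcup_{k}\{\gamma_{k}\}$ is non-collapsing by Lemma \ref{lem5-0-1}; hence $\gamma_{j}\in\Gamma$ and the hypotheses of Lemma \ref{lem5-2} hold with a threshold $\kappa$ that is uniform in $j$, because $\Gamma$, the target accuracy $\varepsilon$ and $\mu$ are all fixed once and for all.

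Granting the growth bound $|u_{j}(x)|\le\mu(1+|x|^{1+\alpha})$ — which is propagated along the induction exactly as \eqref{4-2-3} is propagated in the proof of Lemma \ref{lem4-2}, splitting into the ranges $r|x|\le\tfrac12$, where one uses the flatness estimate and the interior $C^{1,\alpha}$ bound for the approximating $F$-harmonic function, and $r|x|>\tfrac12$, where one uses the inductive growth bound together with the size of the affine corrections, after fixing $r$ universally small — Lemma \ref{lem5-2} applied with $\varepsilon=\tfrac{\nu_{1}}{2}r$ yields an $F$-harmonic $h_{j}\in C^{1,\alpha}(B_{3/4})$ with $\|u_{j}-h_{j}\|_{L^{\infty}(B_{1/2})}\le\tfrac{\nu_{1}}{2}r$. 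The interior $C^{1,\alpha}$ estimate for $F$-harmonic functions then gives $\tilde l_{j}(x)=h_{j}(0)+Dh_{j}(0)\cdot x$ with $|h_{j}(0)|+|Dh_{j}(0)|\le L$ and $\sup_{B_{r}}|h_{j}-\tilde l_{j}|\le L r^{1+\alpha}=\tfrac{\nu_{1}}{2}r$, since $\nu_{1}=2Lr^{\alpha}$; hence $\sup_{B_{r}}|u_{j}-\tilde l_{j}|\le\nu_{1}r\le\nu_{j+1}r$. Scaling back through $l_{j+1}(x):=l_{j}(x)+P_{j}r^{j}\tilde l_{j}(x/r^{j})=A_{j+1}+B_{j+1}\cdot x$, with $A_{j+1}=A_{j}+P_{j}r^{j}h_{j}(0)$ and $B_{j+1}=B_{j}+P_{j}Dh_{j}(0)$, gives $|A_{j+1}-A_{j}|\le L P_{j}r^{j}$, $|B_{j+1}-B_{j}|\le L P_{j}$, and $\sup_{B_{r^{j+1}}}|u-l_{j+1}|=P_{j}r^{j}\sup_{B_{r}}|u_{j}-\tilde l_{j}|\le P_{j+1}r^{j+1}$, which closes the induction.

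I expect the main obstacle to be structural rather than computational: the rescaling forces a different degeneracy law $\gamma_{j}$ at every scale, and the scheme breaks down unless all these laws lie in a single fixed non-collapsing family carrying the common normalization $\gamma_{j}(1)\ge 1$, so that Lemma \ref{lem5-2} applies with a $\kappa$ that does not depend on $j$. Arranging this is exactly what the recursive construction of $(\nu_{k})$ and $(c_{k})$ — resting on the Dini continuity of $\gamma^{-1}$, which makes $(\gamma^{-1}(\vartheta^{k}))\in\ell_{1}$, on Lemma \ref{lem5-0}, and on Lemma \ref{lem5-0-1} — accomplishes; once it is in force, the remaining steps are routine flatness-improvement bookkeeping, the only delicate residual point being the propagation of the polynomial growth bound, which is handled as in Lemma \ref{lem4-2}.
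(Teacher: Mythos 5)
Your proposal is correct and follows essentially the same route as the paper: induction via the rescalings $u_j(x)=\bigl(u(r^jx)-l_j(r^jx)\bigr)/\bigl(P_jr^j\bigr)$ (the paper writes the equivalent one-step recursion $u_j=(u_{j-1}(r\cdot)-\overline{l}_{j-1}(r\cdot))/(\nu_j r)$), identification of the $\xi_j$-perturbed equation with degeneracy law $\gamma_j(t)=\frac{P_j}{r^{j(\sigma-1)}}\gamma(P_jt)$ and Hamiltonian bound $\mathcal{M}(1+\gamma_j(|\xi|))$ using $\nu_i>r^{\sigma-1}$, uniform applicability of Lemma \ref{lem5-2}/\ref{lem5-3} through the shored-up, non-collapsing family $\Gamma$, propagation of the growth bound $|u_j(x)|\le\mu(1+|x|^{1+\alpha})$ exactly as in Lemma \ref{lem4-2}, and the final scaling-back using $\nu_1\le\nu_{j+1}$ to obtain $\sup_{B_{r^{j+1}}}|u-l_{j+1}|\le P_{j+1}r^{j+1}$ together with the stated bounds on $|A_{j+1}-A_j|$ and $|B_{j+1}-B_j|$. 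No gaps beyond those shared with the paper's own argument.
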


\begin{proof}
We argue by induction. Let
$$
u_1(x)=\frac{u(rx)-\overline{l}_0(rx)}{\nu_1r},
$$
with $\nu_1$ and $\overline{l}_0(x)=a_0+b_0x=l(x)$ as in Lemma \ref{lem5-3}. Then $u_1$ is a solution to
$$
\gamma_1\left(\left|Du_1+\frac{b_0}{\nu_1}\right|\right)\mathcal{I}_1(u_1,x)+H_1\left(x,Du_1+\frac{b_0}{\nu_1}\right)=f_1(x) \quad\text{in } B_1,
$$
where
$$
f_1(x)=f(rx), \quad \gamma_1(t)=\frac{\nu_1}{r^{\sigma-1}}\gamma(\nu_1t), \quad H_1(x,\xi)=H(rx,\nu_1\xi)
$$
and the nonlocal operator $\mathcal{I}_1$ possesses the same uniform ellipticity property as the $\mathcal{I}$ in \eqref{main}.
By the argument before Lemma \ref{lem5-3}, we have $\gamma_1(1)=1$. By $(A_6)$, we have
\begin{align*}
|H_1(x,\xi)|&\le \mathcal{M}(1+\gamma(\nu_1|\xi|))\\
&\le \mathcal{M}\left(1+\frac{\nu_1}{r^{\sigma-1}}\gamma(\nu_1|\xi|)\right)=\mathcal{M}(1+\gamma_1(|\xi|)).
\end{align*}
Moreover, we have $|u_1(x)|\le \mu(1+|x|^{1+\alpha})$ for $x\in\mathbb{R}^N$, which will be proved later.
Thus we have verified that $u_1$ falls into the framework of Lemma \ref{lem5-3}. Hence there exists an affine function $\overline{l}_1(x)=a_1+b_1\cdot x$ with $|a_1|+|b_1|\le L$ such that
$$
\sup_{x\in B_{r}}|u_1(x)-\overline{l}_1(x)|\leq \nu_1r.
$$

Next, let
$$
u_2(x)=\frac{u_1(rx)-\overline{l}_1(rx)}{\nu_2r}
$$
for $\nu_2\ge\nu_1>r^{\sigma-1}$ chosen earlier. Then $u_2$ is a solution to
$$
\gamma_2\left(\left|Du_2+\frac{b_1}{\nu_2}+\frac{b_0}{\nu_2\nu_1}\right|\right)\mathcal{I}_2(u_2,x)+H_2\left(x,Du_2+\frac{b_1}{\nu_2}
+\frac{b_0}{\nu_2\nu_1}\right)=f_2(x) \quad\text{in } B_1,
$$
where
$$
f_2(x)=f_1(rx)=f(r^2x), \quad \gamma_2(t)=\frac{\nu_2}{r^{\sigma-1}}\gamma_1(\nu_2t)=\frac{\nu_2\nu_1}{r^{2(\sigma-1)}}\gamma(\nu_2\nu_1t)
$$
and
$$
 H_2(x,\xi)=H_1(rx,\nu_2\xi)=H(r^2x,\nu_2\nu_1\xi),
$$
and the nonlocal operator $\mathcal{I}_2$ has the same uniform ellipticity condition as the $\mathcal{I}$ in \eqref{main}. In view of $(A_6)$, we have
\begin{align*}
|H_2(x,\xi)|&\le \mathcal{M}(1+\gamma(\nu_2\nu_1|\xi|))\\
&\le\mathcal{M}\left(1+\frac{\nu_2\nu_1}{r^{2(\sigma-1)}}\gamma(\nu_2\nu_1|\xi|)\right)=\mathcal{M}(1+\gamma_2(|\xi|)).
\end{align*}
Moreover, we have $|u_2(x)|\le \mu(1+|x|^{1+\alpha})$ for $x\in\mathbb{R}^N$, which will be proved later. That is, $u_2$ also falls into the framework of Lemma \ref{lem5-3}.
Hence there exists an affine function  $\overline{l}_2(x)=a_2+b_2\cdot x$ with $|a_2|+|b_2|\le L$ such that
$$
\sup_{x\in B_{r}}|u_2(x)-\overline{l}_2(x)|\leq \nu_1r.
$$

Recursively, let
$$
u_j(x)=\frac{u_{j-1}(rx)-\overline{l}_{j-1}(rx)}{\nu_jr}
$$
for $\nu_j\ge\nu_{j-1}\ge\dots\ge\nu_1>r^{\sigma-1}$ selected earlier. Then $u_j$ is a solution to
\begin{align*}
&\gamma_j\left(\left|Du_j+\frac{b_{j-1}}{\nu_j}+\frac{b_{j-2}}{\nu_j\nu_{j-1}}+\dots+\frac{b_0}{\nu_j\nu_{j-1}\dots\nu_1}\right|\right)
\mathcal{I}_j(u_j,x)\\
&\qquad+H_j\left(x,Du_j+\frac{b_{j-1}}{\nu_j}+\frac{b_{j-2}}{\nu_j\nu_{j-1}}+\dots+\frac{b_0}{\nu_j\nu_{j-1}\dots\nu_1}\right)=f_j(x) \quad\text{in } B_1,
\end{align*}
where $f_j(x)=f_{j-1}(rx)=\dots=f(r^jx)$,
$$
\gamma_j(t)=\frac{\nu_j}{r^{\sigma-1}}\gamma_{j-1}(\nu_jt)=\dots=\frac{\prod^j_{i=1}\nu_i}{r^{j(\sigma-1)}}\gamma\biggl(\prod^j_{i=1}\nu_it\biggr)
$$
and
$$
 H_j(x,\xi)=H_{j-1}(rx,\nu_j\xi)=\dots=H\biggl(r^jx,\biggl(\prod^j_{i=1}\nu_i\biggr)\xi\biggr)
$$
and the nonlocal operator $\mathcal{I}_j$ carries the same uniform ellipticity condition as $\mathcal{I}$ in \eqref{main}. Recall that $\nu_j\ge\nu_{j-1}$ is fixed in such way that either $\nu_j=\nu_{j-1}$ or else $\gamma_j(c_j)=1$. Thanks to $(A_6)$, we get
\begin{align*}
|H_j(x,\xi)|&\le \mathcal{M}\biggl(1+\gamma\biggl(\prod^j_{i=1}\nu_i|\xi|\biggr)\biggr)\\
&\le\mathcal{M}\biggl(1+\frac{\prod^j_{i=1}\nu_i}{r^{j(\sigma-1)}}\gamma\biggl(\prod^j_{i=1}\nu_i|\xi|\biggr)\biggr)
=\mathcal{M}(1+\gamma_j(|\xi|)).
\end{align*}

Finally, we prove that
\begin{equation}
\label{5-4-1}
|u_j(x)|\le \mu(1+|x|^{1+\alpha}), \quad x\in\mathbb{R}^N.
\end{equation}
At this stage, $u_j$ meets the requirements of Lemma \ref{lem5-3}. Then we apply again Lemma \ref{lem5-3} to find an affine function  $\overline{l}_j(x)=a_j+b_j\cdot x$ with $|a_j|+|b_j|\le L$ such that
\begin{equation}
\label{5-4-2}
\sup_{x\in B_{r}}|u_j(x)-\overline{l}_j(x)|\leq \nu_1r.
\end{equation}

Then let us justify the assertion \eqref{5-4-1} by an induction argument. For $j=0$, we take $u_0=u$. Assume \eqref{5-4-1} is true for $i=0,1,2,\dots,j-1$.
We shall prove the corresponding claim for the case $i=j$. We next distinguish two mutual exclusive situations. If $r|x|\ge\frac{1}{2}$, through the induction assumption and $2Lr^\alpha=\nu_1\le\nu_j<1$, we obtain
\begin{align*}
|u_j(x)|&\le (\nu_jr)^{-1}(|u_{j-1}(rx)|+|l_{j-1}(rx)|)\\
&\le(2Lr^{1+\alpha})^{-1}[\mu(1+|rx|^{1+\alpha})+L(1+|rx|)]\\
&\le\left(\frac{\mu}{2L}+\frac{2^\alpha\mu}{L}+2^\alpha+2^{\alpha-1}\right)|x|^{1+\alpha}=\widetilde{\mu}|x|^{1+\alpha}.
\end{align*}
We verify \eqref{5-4-1} by choosing $\mu$ such that $\widetilde{\mu}\le\mu$. On the other hand, if $r|x|<\frac{1}{2}$, exploiting again $2Lr^\alpha=\nu_1\le\nu_j<1$, we have
\begin{align*}
|u_j(x)|&\le (\nu_jr)^{-1}(|u_{j-1}(rx)-h(rx)|+|h(rx)-l_{j-1}(rx)|)\\
&\le(\nu_jr)^{-1}\left(\frac{\nu_1r}{2}+Lr^{1+\alpha}|x|^{1+\alpha}\right)
\le\frac{1}{2}+\frac{1}{2}|x|^{1+\alpha},
\end{align*}
where $h\in C^{1,\alpha}_{\rm loc}(B_1)$ is an $F$-harmonic function from Lemma \ref{lem5-2}.
This completes the proof of \eqref{5-4-1}.

By \eqref{5-4-2}, rescaling back to $u$, we obtain
$$
\sup_{x\in B_{r^2}}|u_{j-1}(x)-\overline{l}_{j-1}(x)-\nu_jr\overline{l}_j(r^{-1}x)|\le \nu_1\nu_jr^2.
$$
It follows that
$$
\sup_{x\in B_{r^3}}|u_{j-2}(x)-\overline{l}_{j-2}(x)-\nu_{j-1}r\overline{l}_{j-1}(r^{-1}x)-\nu_{j-1}\nu_jr^2\overline{l}_j(r^{-2}x)|\le \nu_1\nu_{j-1}\nu_jr^3
$$
and, recursively,
$$
\sup_{x\in B_{r^{j+1}}}|u(x)-l_{j+1}(x)|\le \nu_1^2\nu_2\nu_3\cdots\nu_jr^{j+1}\le \left(\prod^{j+1}_{i=1}\nu_i\right)r^{j+1}.
$$
Here
$$
l_{j+1}(x)=\overline{l}_0(x)+\sum^j_{i=1}\overline{l}_i(r^{-i}x)\biggl(\prod^{i}_{k=1}\nu_k\biggr)r^i=A_{j+1}+B_{j+1}\cdot x
$$
with $\overline{l}_i(x)=A_i+B_i\cdot x$. Obviously, it holds that
$$
|A_{j+1}-A_{j}|\le C\left(\prod^{j}_{k=1}\nu_k\right)r^j
\quad\text{and}\quad
|B_{j+1}-B_{j}|\le C\prod^{j}_{k=1}\nu_k.
$$
This completes the proof.
\end{proof}

Making use of Proposition \ref{pro5-4}, we could deduce the $C^1$-regularity for the viscosity solutions to \eqref{main3}. Owing to the proof of Theorem \ref{thm3} is analogous to that of \cite[Theorem 1]{APPT22}, we just sketch it here.

\begin{proof}[\textbf{Proof of Theorem \ref{thm3}}] To prove Theorem \ref{thm3}, we first reduce the problem to a smallness regime in Proposition \ref{pro5-4},  as in the proof of Theorem \ref{thm1}.
Let
$$
v(x)=\frac{u(rx)}{K}
$$
with $0<r\le1$ and $K\ge1$ to be chosen later. If $u$ is a solvution to \eqref{main3}, then $v$ is a solution to
$$
-\overline{\gamma}(|Dv|)\overline{\mathcal{I}}(v,x)+\overline{H}(x,Dv)=\overline{f}(x) \quad\text{in }  B_1,
$$
where $\overline{\mathcal{I}}$ has the same uniform ellipticity property as the nonlocal operator $\mathcal{I}$ in \eqref{main}, and
$$
\overline{\gamma}(|Dv|)=\frac{\gamma\left(\frac{K}{r}|Dv|\right)}{\gamma\left(\frac{K}{r}\right)}, \quad  \overline{f}(x)=\frac{\frac{r^\sigma}{K}}{\gamma\left(\frac{K}{r}\right)}f(rx)
$$
and
$$
\overline{H}(x,Dv)=\frac{\frac{r^\sigma}{K}}{\gamma\left(\frac{K}{r}\right)}H\left(rx,\frac{K}{r}Dv\right).
$$
Since$\gamma(1)\ge1$, we may conclude that
\begin{align*}
|\overline{H}(x,\xi)|&\le \frac{r^\sigma}{K}\gamma^{-1}\left(\frac{K}{r}\right)\mathcal{M}\left(1+\gamma\left(\frac{K}{r}|\xi|\right)\right)\\
&\le\frac{r^\sigma}{K}\mathcal{M}(1+\overline{\gamma}(|\xi|))
=\overline{\mathcal{M}}(1+\overline{\gamma}(|\xi|)).
\end{align*}
By choosing suitable numbers $r$ and $K$, we will fall within a smallness regime.

By a similar argument as in the construction of the sequence $(\nu_j)$, we conclude that $(A_j)$ and $(B_j)$ are Cauchy sequences.
Thus there exist $A_{\infty}\in\mathbb{R}$ and $B_\infty\in\mathbb{R}^N$ such that $A_j\rightarrow A_{\infty}$ and $B_j\rightarrow B_{\infty}$. Let
$$
l_\infty=A_{\infty}+B_{\infty}\cdot x.
$$
For all $0<\rho\ll1$, there exists $j\in\mathbb{N}$ satisfying $r^{j+1}<\rho\le r^j$. Then we arrive at
$$
\sup_{x\in B_\rho}|u(x)-l_\infty(x)|\leq C\omega(\rho)\rho,
$$
where $\omega(t)$ is a modulus of continuity. The details can be found in \cite[Proof of Theorem 1, Pages 31-32]{APPT22}, or in \cite[Proof of Theorem 1]{WJ25}.
\end{proof}

\section*{Acknowledgements}
This work was supported by the National Natural Science Foundation of China (Nos. 12071098, 11871134) and the Young talents sponsorship program of Heilongjiang Province (No. 2023QNTJ004).

\section*{Declarations}
\subsection*{Conflict of interest} The authors declare that there is no conflict of interest. We also declare that this
manuscript has no associated data.

\subsection*{Data availability} Data sharing is not applicable to this article as no datasets were generated or analysed
during the current study.

\end{document}